\newtheorem{theorem}{Theorem}[section]
\newtheorem{lemma}[theorem]{Lemma}
\theoremstyle{definition}
\newtheorem{remark}[theorem]{Remark}
\numberwithin{equation}{section}
\newcommand{\R}{\mathbb{R}}
\def\x#1{{\rm (\ref{#1})}}
\renewcommand{\epsilon}{\varepsilon}
\def\XXint#1#2#3{{\setbox0=\hbox{$#1{#2#3}{\int}$ }
  \vcenter{\hbox{$#2#3$ }}\kern-.6\wd0}}
\renewcommand{\tocsection}[3]{%
  \indentlabel{\@ifnotempty{#2}{\bfseries\ignorespaces\makebox[\@ifempty{#1}{25pt}{75pt}][l]{#1 #2\quad}}}\bfseries#3}
\renewcommand{\tocsubsection}[3]{%
  \indentlabel{\@ifnotempty{#2}{\ignorespaces\makebox[30pt][l]{#1 #2\quad}}}#3}
\newcommand\@dotsep{4}
\def\@tocline#1#2#3#4#5#6#7{\relax
  \ifnum #1>\c@tocdepth \else
    \par \addpenalty\@secpenalty\addvspace{#2}%
    \begingroup \hyphenpenalty\@M
    \@ifempty{#4}{\@tempdima\csname r@tocindent\number#1\endcsname\relax}{\@tempdima#4\relax}%
    \parindent\z@ \leftskip#3\relax \advance\leftskip\@tempdima\relax
    \rightskip\@pnumwidth plus1em \parfillskip-\@pnumwidth
    #5\leavevmode\hskip-\@tempdima{#6}\nobreak
    \leaders\hbox{$\m@th\mkern \@dotsep mu\hbox{.}\mkern \@dotsep mu$}\hfill
    \nobreak
    \hbox to\@pnumwidth{\@tocpagenum{\ifnum#1=1\bfseries\fi#7}}\par
    \nobreak
    \endgroup
  \fi}
\renewcommand\csname r@tocindent0\endcsname{0pt}}
\def\l@section{\@tocline{1}{0pt}{1pc}{25pt}{}}
\def\l@subsection{\@tocline{2}{0pt}{2pc}{30pt}{}}
\title[A new proof  on quasilinear Schr\"{o}dinger equations]{ A new proof  on quasilinear Schr\"{o}dinger equations with prescribed mass and combined nonlinearities}
\author[J. H. Chen]{Jianhua Chen}
\address[Jianhua Chen]{School of Mathematics and Computer Sciences, Nanchang University,  Nanchang, Jiangxi 330031, P. R. China}
\email{cjh19881129@163.com}
\author[J. J. Sun]{Jijiang Sun}
\address[Jijiang Sun]{School of Mathematics and Computer Sciences, Nanchang University,  Nanchang, Jiangxi 330031, P. R. China}
\email{sunjijiang2005@163.com}
\author[C. G. Yuan]{Chenggui Yuan}
\address[Chenggui Yuan]{Department of Mathematics, Swansea University, Bay Campus, Swansea SA1 8EN, UK}
\email{C.Yuan@swansea.ac.uk}
\author[J. Zhang]{Jian Zhang}
\address[Jian Zhang]{College of Science, Hunan University of Technology and Business, Changsha, Hunan 410205, P. R. China}
\email{zhangjian@hutb.edu.cn}
\begin{document}

\maketitle

\begin{abstract}
In this work,  we study the quasilinear Schr\"{o}dinger equation
\begin{equation*}
\aligned
-\Delta u-\Delta(u^2)u=|u|^{p-2}u+|u|^{q-2}u+\lambda u,\,\, x\in\R^N,
\endaligned
\end{equation*}
under the mass constraint
\begin{equation*}
\int_{\R^N}|u|^2\text{d}x=a,
\end{equation*}
where $N\geq2$, $2<p<2+\frac{4}{N}<4+\frac{4}{N}\leq q<2\cdot2^*$, $a>0$ is a given mass and $\lambda$ is a Lagrange multiplier. As a continuation of our previous work (Chen et al., 2025, arXiv:2506.07346v1),
we  establish some results  by means of a suitable change of variables as follows:
  \begin{itemize}
\item[{\bf(i) }] {\bf qualitative analysis of the constrained minimization}\\
For $2<p<4+\frac{4}{N}\leq q<2\cdot2^*$, we provide a detailed study of the minimization problem under some appropriate conditions on $a>0$;
 \end{itemize}
  \begin{itemize}
\item[{\bf(ii)}]{\bf existence of two  distinct radial normalized solutions}\\
For $2<p<2+\frac{4}{N}<4+\frac{4}{N}<q\leq2^*$,  we obtain a radial local minimizer under the normalized constraint;\\
For $2<p<2+\frac{4}{N}<4+\frac{4}{N}<q\leq2^*$,   we obtain a radial mountain pass type normalized solution distinct from the local minimizer.
 \end{itemize}
Notably, the second result {\bf (ii)} resolves the open problem {\bf(OP1)} posed by (Chen et al., 2025, arXiv:2506.07346v1). Unlike previous approaches that rely on constructing Palais-Smale-Pohozaev sequences by [Jeanjean, 1997, Nonlinear Anal. {\bf 28}, 1633-1659], we obtain the mountain pass solution employing a new method, which relies on the monotonicity trick developed by (Chang et al., 2024, Ann. Inst. H. Poincar\'{e} C Anal. Non Lin\'{e}aire, {\bf 41}, 933-959).

We emphasize that the methods developed in this work can be extended to investigate the existence of mountain pass-type normalized solutions for other classes of quasilinear Schr\"{o}dinger equations via dual method.
\end{abstract}

\tableofcontents

\section{Introduction}
In this paper, we
are concerned with the following quasilinear Schr\"{o}dinger equation
\begin{equation}\label{p}
\aligned
-\Delta u-\Delta(u^2)u=|u|^{p-2}u+|u|^{q-2}u+\lambda u,\ \ \ x\in\R^N,
\endaligned
\end{equation}
with the prescribed mass
\begin{equation*}
  \int_{\R^N}u^2\text{d}x=a,
\end{equation*}
where $2<p<2+\frac{4}{N}<4+\frac{4}{N}\leq q<2\cdot2^*$, $\lambda$ is a Lagrange multiplier,  $a$ is a given mass,  and $N\geq2$.

To the best of our knowledge, the study of standing wave solutions for quasilinear Schr\"{o}dinger equations dates back to the seminal work of \cite{PSW}. Using constrained minimization methods, the authors of \cite{PSW} established existence results under various conditions. Notably, in their proofs, the non-differentiability of the energy functional for \x{p} did not play a significant role, and thus classical minimax techniques were not employed in this context.
To address this challenge, the authors \cite{CL1,LWW} introduced a novel dual method, transforming the quasilinear problem into a semilinear one and restoring the differentiability of the energy functional for \x{p}. This breakthrough enabled the application of classical minimax theory to quasilinear problems. Over the past two decades, this approach has led to extensive research on the existence of solutions for \x{p}, and we refer the reader to \cite{CTC,CTC2,DMS2,JS,DMS,FS,HQZ,LW,LWW1,LLW6} for further developments.
In a different direction, Liu et al. \cite{LLW2} proposed an innovative perturbation method. By introducing an auxiliary term to the energy functional associated with \x{p}, they restored differentiability within a constrained functional space. Using classical minimax theory, they first obtained critical points for the perturbed functional and then studied their asymptotic behavior to derive solutions for the original problem. This framework has inspired numerous subsequent works on standing wave solutions.  We refer the readers to \cite{LLW3,LLW4}  and its references therein.

In recent years, since the
$L^2$-norm plays a crucial role in investigating the orbital stability or instability of solutions,  the study of normalized solutions is strongly motivated by physical considerations. Based on this fact, the normalized solutions of the quasilinear Schr\"{o}dinger equation \x{p} with a prescribed
$L^2$-norm has been widely studied. Consequently, the existence problem for normalized solutions carries substantial interest from both physical and mathematical viewpoints. Within the variational framework, our objective is to obtain a nontrivial solution
$(u_a,\lambda_a)\in \mathcal{K}\times \R$ to \x{p} such that $(u_a,\lambda_a)$ satisfying $\|u_a\|_2^2=a$, where
$$
\mathcal{K}:=\left\{u\in H^1(\R^N): \int_{\R^N}|\nabla u|^2u^2\text{d}x<+\infty\right\}.
$$
Essentially, solving this problem reduces to identifying the critical points of the corresponding Euler-Lagrange functional
\begin{equation*}
\mathcal{I}_\lambda(u)=\frac{1}{2}\int_{\R^N}(1+2u^2)|\nabla u|^2\text{d}x-\frac{\lambda}{2}\int_{\R^N}|u|^2\text{d}x-\frac{1}{p}\int_{\R^N}|u|^p\text{d}x-\frac{1}{q}\int_{\R^N}|u|^q\text{d}x,
\end{equation*}
where $u\in\mathcal{K}$. The weak solution characterization for \x{p} asserts that $u$ satisfies the equation if and only if
\begin{equation*}
0=\langle \mathcal{I}'_\lambda(u),\phi\rangle=\lim_{t\rightarrow0^+}\frac{\mathcal{I}_\lambda(u+t\phi)-\mathcal{I}_\lambda(u)}{t},
\end{equation*}
for any $\phi\in\mathcal{C}^\infty_0(\R^N)$. Using this method, Colin et al. \cite{CJS1} and Jenajean et al. \cite{LJ} firstly studied the following quasilinear problem
\begin{equation}\label{p1}
\aligned
-\Delta u-\Delta(u^2)u=|u|^{p-2}u+\lambda u,\ \ \ x\in\R^N,
\endaligned
\end{equation}
with the prescribed mass
$
  \int_{\R^N}u^2\text{d}x=a,
$
and define the minimization
$
m_0(a)=\inf\limits_{u\in S_a}I_0(u),
$
where
\begin{equation*}
I_0(u)=\frac{1}{2}\int_{\R^N}(1+2u^2)|\nabla u|^2\text{d}x-\frac{1}{p}\int_{\R^N}|u|^p\text{d}x
\end{equation*}
and
\begin{equation*}
S_a:=\left\{u\in \mathcal{K}\bigg|\int_{\R^N}u^2\text{d}x=a\right\}.
\end{equation*}
By constrained minimization method, they showed that $I_0$ is bounded below and $m_0(a)$ can be achieved when $2<p<4+\frac{4}{N}$, and  $I_0$ is unbounded below  when $4+\frac{4}{N}<p<22^*$. Thus $4+\frac{4}{N}$ is called as $L^2$-critical exponent just as  $2+\frac{4}{N}$ is $L^2$-critical exponent for semilinear Schr\"{o}dinger equations.
For $L^2$-critical problem, Ye et al. \cite{YY}  reduced the constraint set $S_a$ to
$$
N_a=\left\{u\in S_a\big|\int_{\R^N}u^2|\nabla u|^2\text{d}x<\frac{1}{4+\frac{4}{N}}\int_{\R^N}|u|^{4+\frac{4}{N}}\text{d}x\right\}
$$
and showed the existence of normalized solution for this problem for $N\leq3$ by a minimization argument.  To use minimax theory, Jeanjean et al. \cite{JLW} were the first to use the perturbation  method developed in \cite{LLW2} to show the existence of  multiple solutions for $L^2$-subcritical growth.
For $L^2$-supercritical problem, Li et al. \cite{LZ} established the existence of ground state normalized solutions and infinitely many normalized solutions for \x{p1} via perturbation  method  when $N=2,3$. Furthermore,
the existence of normalized solutions for the quasilinear Schr\"{o}dinger equation with potentials via the  perturbation  method  can be found in \cite{GG}.
Very recently, Jeanjean et al. \cite{JZZ} used the constrained minimization method to  extend  the analysis from $1\leq N\leq3$ to arbitrary
$N\geq1$. For problem \x{p} with combined nonlinearities, in \cite{ML}, Mao et al. showed that equation \x{p} has a local minimizer and a mountain pass type normalized solution for $2<p<2+\frac{4}{N}<4+\frac{4}{N}<q<2^*$. Afterwards, He et al. \cite{HQZ1}  adopted the minimization argument and some strategies in \cite{JZZ,NS}, and extended the results for the dimension $1\leq N\leq3$ to $1\leq N\leq4$. Meanwhile, in \cite{LYC}, Li et al. showed the existence of normalized solutions with Sobolev critical growth.

For the two approaches discussed above, several  drawbacks can be identified as follows:

{\bf the constrained minimization method}
 \begin{itemize}
\item The non-differentiability of the energy functionals prevents the applications of standard minimax methods. Consequently, solutions beyond the ground state cannot be obtained through these techniques.
\end{itemize}

{\bf  the  perturbation  method }
 \begin{itemize}
\item The variational discrepancy between the perturbed and original energy functionals implies that their respective ground state energies on the manifold are generally non-equivalent. This fundamental difference precludes the derivation of ground state normalized solutions through manifold-based perturbation methods. Furthermore, although the perturbation framework enables effective deployment of minimax principles, its implementation involves substantial technical subtleties that may engender complications.
\end{itemize}
Based on the above facts, the dual approach seems more capable of avoiding these issues. By the dual method, the quasilinear problem can be transformed into semilinear problem. Through this idea, Zhang et al. \cite{ZLW} first  used the dual method to prove the existence of infinitely
many normalized solutions and  minimization problem  for \x{p1} with $L^2$-subcritical growth and a new constraint condition.
In those methods, since $v_t=t^{N/2}v(tx)$($t>0$) dose not keep the constraint
unchanged,  they can only
analysed  the equivalence relations  between the constraint conditions to prove the existence of normalized solutions.
To overcome this obstacle, the authors \cite{CRSZ}  used a novel stretching mapping
\begin{equation*}
v_t(x):=f^{-1}(t^{N/2}f(v(tx)))(t>0)
\end{equation*}
and established a new variational framework to study some minimization problems. By means of this stretching mapping, it can keep the constraint
unchanged.  The main features of \cite{CRSZ} is that they considered the general $L^2$-supercritical growth and critical exponential growth for $N=2,3$.
 Note that the authors \cite{CRSZ} proposed the following open question:

{\bf(OP1)} Existence of mountain pass type normalized solutions via the dual method:
  \begin{itemize}
\item Can the current technical limitations be overcome?

\item What new tools would be required?
 \end{itemize}
The mainly aim of this paper is to answer the above open question {\bf(OP1)}. In particular, we first analyse the  minimization problem for \x{p} with different conditions on $p,q$. We then show the existence of two distinct normalized solutions, one of which is a  local minimizer  and the other is a mountain pass solution on the different range for $p,q$.

 Before stating our results, we show the following Gagliardo-Nirenberg inequality \cite{MIW}, which will play a fundamental role in our subsequent analysis.
\begin{lemma}\label{lemma1.1}(\cite{MIW})
 For any $N\geq2$ and $s\in(2,2^*)$, there is a constant $C_{N,s}>0$
 depending on $N$ and $s$ such that
 \begin{equation*}
   \int_{\R^N}|u|^s\text{d}x\leq C_{N,s}^s\left( \int_{\R^N}|u|^2\text{d}x\right)^{\frac{2s-N(s-2)}{4}}\left( \int_{\R^N}|\nabla u|^2\text{d}x\right)^{\frac{N(s-2)}{4}},\ \ \forall\,u\in H^1(\R^N).
 \end{equation*}
\end{lemma}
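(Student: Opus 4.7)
The plan is to reduce the inequality to the classical Sobolev embedding through Hölder's interpolation, with a separate treatment of dimension $N=2$. By a standard density argument it suffices to prove the estimate for $u\in C_c^{\infty}(\R^{N})$; a preliminary scaling check ($u\mapsto u(\lambda\cdot)$, $\lambda>0$) verifies that the exponents in the statement are the unique ones making the inequality scale-invariant, providing a consistency check before starting.

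For $N\geq 3$, I would set $\theta:=N(s-2)/(2s)\in(0,1)$, noting that $\tfrac{1}{s}=\tfrac{1-\theta}{2}+\tfrac{\theta}{2^{*}}$ holds because $s\in(2,2^{*})=(2,2N/(N-2))$. Hölder's interpolation then gives $\|u\|_{s}\leq\|u\|_{2}^{1-\theta}\|u\|_{2^{*}}^{\theta}$, and the Sobolev embedding $\|u\|_{2^{*}}\leq S_{N}\|\nabla u\|_{2}$ upgrades this to $\|u\|_{s}^{s}\leq S_{N}^{s\theta}\|u\|_{2}^{s(1-\theta)}\|\nabla u\|_{2}^{s\theta}$. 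A short computation verifies $s(1-\theta)/2=(2s-N(s-2))/4$ and $s\theta/2=N(s-2)/4$, matching the claim with $C_{N,s}^{s}:=S_{N}^{s\theta}$.

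The main obstacle is dimension $N=2$, where $2^{*}=\infty$ and the Sobolev embedding no longer lands in a finite exponent. To bypass this, I would first prove the Ladyzhenskaya-type estimate $\|u\|_{4}^{4}\leq 2\|u\|_{2}^{2}\|\nabla u\|_{2}^{2}$ (which is the claim with $s=4$, $N=2$): the identity $u^{2}(x_{1},x_{2})=2\int_{-\infty}^{x_{1}}u\partial_{1}u\,dt$ and its symmetric counterpart in $x_{2}$, multiplied pointwise and integrated using Fubini, followed by Cauchy--Schwarz and the AM--GM inequality, yield this bound. For $s\in(2,4]$ the general case then follows by Hölder interpolation between $L^{2}$ and $L^{4}$. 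For $s>4$, I would apply the Ladyzhenskaya bound to $v:=|u|^{\gamma}$ with $\gamma:=s/4$, which converts $\|v\|_{4}^{4}$ into $\|u\|_{s}^{s}$ and leaves a factor $\int|u|^{s/2-2}|\nabla u|^{2}$ that can be handled by another Hölder step, together with a brief induction on $s$; a final density argument then extends the inequality from $C_c^{\infty}(\R^{N})$ to all of $H^{1}(\R^{N})$.
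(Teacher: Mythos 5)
Note first that the paper itself offers no proof of this lemma: it is quoted directly from Weinstein \cite{MIW}, so your proposal can only be judged on its own merits. Your $N\geq 3$ argument (Lebesgue interpolation with $\theta=N(s-2)/(2s)$ plus the Sobolev embedding $\|u\|_{2^*}\leq S_N\|\nabla u\|_2$) is correct and complete, the exponent bookkeeping checks out, and your Ladyzhenskaya estimate $\|u\|_4^4\leq 2\|u\|_2^2\|\nabla u\|_2^2$ together with $L^2$--$L^4$ interpolation correctly settles $N=2$, $s\in(2,4]$.

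However, your treatment of $N=2$, $s>4$ has a genuine gap. After applying the Ladyzhenskaya bound to $v=|u|^{s/4}$ you are left with the factor $\int_{\R^2}|u|^{s/2-2}|\nabla u|^2\,\mathrm{d}x$, and this \emph{cannot} be "handled by another H\"older step": any H\"older splitting $\int |u|^{s/2-2}|\nabla u|^2\leq \bigl\||u|^{s/2-2}\bigr\|_{p}\,\bigl\||\nabla u|^2\bigr\|_{p'}$ with $p'>1$ requires $\nabla u\in L^{2p'}$ with $2p'>2$, which is not available in $H^1$, while $p'=1$ forces $p=\infty$ and hence an $L^\infty$ bound on $u$ that $H^1(\R^2)$ does not provide. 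Attempting instead Cauchy--Schwarz, $\int |u|^{s/2-2}|\nabla u|^2\leq \bigl(\int |u|^{s-4}|\nabla u|^2\bigr)^{1/2}\|\nabla u\|_2$, makes the exponent on $|u|$ \emph{grow}, so the induction you sketch runs in the wrong direction and does not close. The standard repair keeps your Fubini/fundamental-theorem idea but applies it at exponent $s/2$ rather than $s/4$: from $w(x)^2\leq\bigl(\int_\R|\partial_1 w|\,\mathrm{d}t\bigr)\bigl(\int_\R|\partial_2 w|\,\mathrm{d}\tau\bigr)$ with $w=|u|^{s/2}$ (equivalently, the embedding $W^{1,1}(\R^2)\hookrightarrow L^2(\R^2)$), Cauchy--Schwarz on each factor gives $\|u\|_s^s\leq \bigl(\tfrac{s}{2}\bigr)^2\|u\|_{s-2}^{s-2}\,\|\partial_1 u\|_2\|\partial_2 u\|_2$, and now a downward induction in steps of $2$, terminating in your base range $s\in(2,4]$, yields exactly the claimed exponents. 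With that replacement the proof is complete; as written, the $s>4$ case is not.
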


\par
Now, we give another version of Gagliardo-Nirenberg inequality as follows. Let
\begin{equation*}
\mathcal{E}:=\left\{u\in L^1(\R^N): |\nabla u|\in L^2(\R^N)\right\}
\end{equation*}
with the norm
\begin{equation*}
\|u\|_\mathcal{E}=\|\nabla u\|_2+\|u\|_1.
\end{equation*}
\begin{lemma}\label{lemma1.2}(\cite{MA})
 For any $N\geq2$ and $t\in(2,2\cdot2^*)$, there is a constant $C_{N,t}>0$
 depending on $N$ and $t$ such that
 \begin{equation*}
   \int_{\R^N}|u|^{\frac{t}{2}}\text{d}x\leq C_{N,t}^t\left( \int_{\R^N}|u|\text{d}x\right)^{\frac{4N-t(N-2)}{2(N+2)}}\left( \int_{\R^N}|\nabla u|^2\text{d}x\right)^{\frac{N(t-2)}{2(N+2)}},\ \ \forall\,u\in\mathcal{E}.
 \end{equation*}
\end{lemma}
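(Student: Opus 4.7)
The plan is to deduce Lemma~\ref{lemma1.2} from the classical Gagliardo--Nirenberg interpolation inequality between $L^{1}(\R^{N})$ and $\dot H^{1}(\R^{N})$ via a simple change of exponent. The key observation is that with the substitution $s:=t/2$, the hypothesis $t\in(2,22^{*})$ translates exactly into $s\in(1,2^{*})$, which is precisely the admissible range for interpolating the $L^{s}$-norm between the $L^{1}$ and $\dot H^{1}$ endpoints.

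Concretely, I first invoke the classical $L^{1}$-endpoint Gagliardo--Nirenberg inequality: for every $s\in(1,2^{*})$ there is a constant $C_{N,s}>0$ such that
\begin{equation*}
\|u\|_{s}\le C_{N,s}\,\|u\|_{1}^{\theta}\,\|\nabla u\|_{2}^{1-\theta},\qquad u\in C^{\infty}_{c}(\R^{N}),
\end{equation*}
where $\theta\in(0,1)$ is uniquely determined by the scaling identity $\tfrac{1}{s}=\theta+(1-\theta)(\tfrac{1}{2}-\tfrac{1}{N})$, giving $\theta=\tfrac{2N-s(N-2)}{s(N+2)}$ and $1-\theta=\tfrac{2N(s-1)}{s(N+2)}$. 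Substituting $s=t/2$, raising both sides to the $s$-th power, and simplifying yields
\begin{equation*}
s\theta=\frac{4N-t(N-2)}{2(N+2)},\qquad s(1-\theta)=\frac{N(t-2)}{N+2}=2\cdot\frac{N(t-2)}{2(N+2)},
\end{equation*}
so the estimate takes exactly the form stated in Lemma~\ref{lemma1.2} after the cosmetic relabelling $C_{N,t}^{t}:=C_{N,s}^{s}$.

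The last step is to extend the inequality from $C^{\infty}_{c}(\R^{N})$ to the entire class $\mathcal{E}$. For $u\in\mathcal{E}$ I would set $u_{R,\eps}:=\eta_{\eps}*(\chi_{R}u)$ with $\chi_{R}$ a smooth cutoff supported in $B_{2R}$ and $\eta_{\eps}$ a standard mollifier; then $u_{R,\eps}\in C^{\infty}_{c}(\R^{N})$, $u_{R,\eps}\to u$ in $L^{1}(\R^{N})$, and $\nabla u_{R,\eps}\to\nabla u$ in $L^{2}(\R^{N})$ as $R\to\infty$ and $\eps\to 0$. Applying the smooth inequality to each $u_{R,\eps}$ and using Fatou's lemma to pass to the limit on the left-hand side completes the proof. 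The only (mildly) delicate step is this density argument, since $\mathcal{E}$-membership is defined through $L^{1}$ integrability and a distributional gradient in $L^{2}$ with no additional Sobolev regularity; the algebraic identification of the Gagliardo--Nirenberg exponents is otherwise entirely routine.
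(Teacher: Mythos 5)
The paper gives no proof of Lemma \ref{lemma1.2}: it is imported wholesale from Agueh \cite{MA}, where it is established (with sharp constants) by $p$-Laplacian/mass-transportation techniques. Since the lemma only asserts the existence of \emph{some} constant $C_{N,t}$, your reduction to the classical $L^1$--$\dot H^1$ Gagliardo--Nirenberg inequality is a legitimate and genuinely more elementary route, and your exponent bookkeeping is correct: from $\frac{1}{s}=\theta+(1-\theta)\left(\frac12-\frac1N\right)$ one gets $\theta=\frac{2N-s(N-2)}{s(N+2)}$, and with $s=t/2$ indeed $s\theta=\frac{4N-t(N-2)}{2(N+2)}$ and $s(1-\theta)=\frac{N(t-2)}{N+2}$, which produces the stated exponent $\frac{N(t-2)}{2(N+2)}$ on $\int|\nabla u|^2$ after accounting for the square in $\|\nabla u\|_2^2$. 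The range translation $t\in(2,22^*)\Leftrightarrow s\in(1,2^*)$ is also right, including $N=2$ where $2^*=+\infty$. What Agueh's approach buys beyond yours is sharpness of the constant, which is never used in this paper.

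The one genuine soft spot is the extension to all of $\mathcal{E}$, which you yourself flag. Writing $\nabla(\chi_R u)=\chi_R\nabla u+u\nabla\chi_R$, your claim $\nabla u_{R,\eps}\to\nabla u$ in $L^2$ requires $\|u\nabla\chi_R\|_2\leq CR^{-1}\|u\|_{L^2(B_{2R}\setminus B_R)}\to0$, and membership in $\mathcal{E}$ only gives $u\in L^1(\R^N)$ with $\nabla u\in L^2(\R^N)$; while $u\in L^2_{loc}(\R^N)$ follows by local Sobolev embedding, there is no a priori decay of $\|u\|_{L^2(B_{2R}\setminus B_R)}$ as $R\to\infty$ --- global $L^2$-membership is essentially the $t=4$ case of the very lemma being proved, so invoking it would be circular. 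The repair is standard: first truncate, setting $T_ku:=\min\{|u|,k\}\,\mathrm{sign}(u)$, so that $|T_ku|\leq|u|$, $\nabla T_ku=\mathbf{1}_{\{|u|<k\}}\nabla u$ a.e., and $\int_{B_{2R}\setminus B_R}|T_ku|^2\leq k\|u\|_1$, whence $R^{-1}\|T_ku\|_{L^2(B_{2R}\setminus B_R)}\to0$; run your cutoff-mollification argument on $T_ku$, then let $k\to\infty$ using monotone convergence on the left and the bounds $\|T_ku\|_1\leq\|u\|_1$, $\|\nabla T_ku\|_2\leq\|\nabla u\|_2$ on the right. (Alternatively, a Fourier frequency-splitting argument of Nash type yields $u\in L^2(\R^N)$ directly for $u\in\mathcal{E}$, after which your density step goes through as written.) With this one supplement your proof is complete.
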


To state our results, we give the following definition. Applying an argument developed by Liu-Wang-Wang \cite{LWW} and Colin-Jeanjean \cite{CL1}, we need to recall some definitions again and collect these as follows. At first, we make the change of variables by $v=f^{-1}(u)$, where $f$ is defined by
\begin{equation}\label{function}
f'(t)=\frac{1}{(1+2f^2(t))^{\frac{1}{2}}}\,\,\text{on}\,\,[0,\infty)\,\,\text{and}\,\,f(t)=-f(-t)\,\,\text{on}(-\infty,0].
\end{equation}
Then equation \x{p} in form can be transformed into
\begin{equation*}
-\Delta v=|f(v)|^{p-2}f(v)f'(v)+|f(v)|^{q-2}f(v)f'(v)+\lambda f(v)f'(v),\ \ \ x\in\R^N,
\end{equation*}
with the prescribed mass
$$
\int_{\R^N}|f(v)|^2=a.
$$

Let $\theta\in[1/2,1]$.
Now, we establish the following problem
\begin{equation}\label{subsection1}
-\Delta v-\lambda f(v)f'(v)=\theta|f(v)|^{p-2}f(v)f'(v)+\theta|f(v)|^{q-2}f(v)f'(v),\,\, x\in\R^N,
\end{equation}
with prescribed mass
$$
\int_{\R^N}|f(v)|^2\text{d}x=a.
$$
To this end,
 we define the following family of $\mathcal{C}^2$-functional for  problem \x{subsection1}
\begin{equation}\label{e}
\aligned
\Phi_{\theta}(v)&=\frac{1}{2}\int_{\R^N}|\nabla v|^2\text{d}x-\frac{\theta}{p}\int_{\R^N}|f(v)|^p\text{d}x-\frac{\theta}{q}\int_{\R^N}|f(v)|^{q}\text{d}x
\endaligned
\end{equation}
on the constraint
\begin{equation*}
\mathcal{S}_a:=\left\{v\in H^1(\R^N): \int_{\R^N}|f(v)|^2\text{d}x=a\right\}.
\end{equation*}

Set
$
A(v):=\frac{1}{2}\int_{\R^N}|\nabla v|^2\text{d}x
$
 and
 \begin{equation*}
B(v):=\frac{1}{p}\int_{\R^N}|f(v)|^p\text{d}x+\frac{1}{q}\int_{\R^N}|f(v)|^{q}\text{d}x.
 \end{equation*}
Thus $A(v)\rightarrow+\infty$ as $\|u\|\rightarrow+\infty$, $B(v)\geq0$ for any $v\in H^1(\R^N)$ and $\Phi'_{\theta}$, $\Phi''_{\theta}$ are $\alpha$-H\"{o}lder continuous on bounded sets for some $\alpha\in(0,1]$.

 To look for such  solutions,
by  the definition of \x{e}, we have
\begin{equation}\label{e1}
\Phi_{\theta}(v_t)=\frac{t^2}{2}\int_{\R^N}\frac{1+2t^Nf^2(v)}{1+2f^2(v)}|\nabla v|^2\text{d}x- \theta t^{-N}\int_{\R^N}\left[\frac{1}{p}\left|t^{N/2}f(v)\right|^{p}+\frac{1}{q}\left|t^{N/2}f(v)\right|^{q}\right]\text{d}x.
\end{equation}
By $\frac{\text{d}\Phi_\theta(v_t)}{\text{d}t}\bigg|_{t=1}=0$ and \x{e1}, it follows that
\begin{equation}\label{e2}
\aligned
\mathcal{P}_{\theta}(v):&=\int_{\R^N}|\nabla v|^2\text{d}x+\frac{N}{2}\int_{\R^N}\frac{2f^2(v)}{1+2f^2(v)}|\nabla v|^2\text{d}x\\
&\quad\quad\quad-\frac{N(p-2)\theta}{2p}\int_{\R^N}|f(v)|^p\text{d}x-\frac{N(q-2)\theta}{2q}\int_{\R^N}|f(v)|^{q}\text{d}x=0.
\endaligned
\end{equation}

In this paper, we set $\Phi(v):=\Phi_1(v)$.
Next, we establish our main results. At first, we define the following minimization problem by
\begin{equation*}
 m(a):=\inf_{v\in\mathcal{S}_a}\Phi(v).
\end{equation*}

\begin{theorem}\label{theorem1.1}
Assume that $N\geq2$ and $2<p<4+\frac{4}{N}\leq q<2\cdot2^*$.
Then there hold:

$(i)$ if $2<p<4+\frac{4}{N}<q<2\cdot2^*,$ then $m(a)=-\infty$ for all $a>0$;

$(ii)$ if $2<p<2+\frac{4}{N}<4+\frac{4}{N}=q<2\cdot2^*$, then there exists a constant $\bar{a}^*_N>0$ such that
  $-\infty<m(a)<0$ is achieved for all $0<a<\bar{a}^*_N$.
\end{theorem}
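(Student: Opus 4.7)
The plan is to fix any $v\in\mathcal S_a$ and test $\Phi$ along the mass-preserving scaling $v_t(x):=f^{-1}(t^{N/2}f(v(tx)))$, whose energy is given explicitly by \eqref{e1}. As $t\to+\infty$, the kinetic term grows like $t^{N+2}$ because $(1+2t^Nf^2(v))/(1+2f^2(v))\sim t^N$, while the negative $q$-term is of order $t^{N(q-2)/2}$. The hypothesis $q>4+4/N$ is exactly $N(q-2)/2>N+2$, so the $q$-term wins and $\Phi(v_t)\to-\infty$, giving $m(a)=-\infty$.

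\textbf{Part (ii), lower bound and negativity.} For $p\in(2,2+4/N)$ I would apply Lemma~\ref{lemma1.1} to $f(v)$, using the pointwise bound $|\nabla f(v)|\le|\nabla v|$ (from $f'\le 1$) and $\int f(v)^2=a$, to obtain an estimate of the form $\int|f(v)|^p\le Ca^{\alpha_p}\|\nabla v\|_2^{N(p-2)/2}$ with $N(p-2)/2<2$. For the critical term $q=4+4/N$ I would instead apply Lemma~\ref{lemma1.2} to $u=f(v)^2\in\mathcal{E}$, using $|\nabla u|^2=\tfrac{4f^2(v)}{1+2f^2(v)}|\nabla v|^2\le 2|\nabla v|^2$; with $q=4+4/N$, the exponent of $\|\nabla u\|_2^2$ coming out exactly to $1$ and the exponent of $a$ to $2/N$, which yields $\int|f(v)|^q\le C'a^{2/N}\|\nabla v\|_2^2$. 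Combining both bounds in $\Phi$ gives $\Phi(v)\ge(\tfrac12-c_1 a^{2/N})\|\nabla v\|_2^2-c_2 a^{\alpha_p}\|\nabla v\|_2^{N(p-2)/2}$, which is bounded below in $\|\nabla v\|_2$ provided $a<\bar a_N^*$ for an explicit threshold making the parenthesis positive (since also $N(p-2)/2<2$). For negativity, I would use the same family $v_t$ with $t\to 0^+$: the $p$-term of \eqref{e1} has exponent $N(p-2)/2$ strictly below both $2$ and $N+2$, so it dominates the kinetic and $q$-terms, and because of its negative sign $\Phi(v_t)<0$ for small $t$, hence $m(a)<0$.

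\textbf{Part (ii), attainment.} This step carries the main obstacle. Let $\{v_n\}\subset\mathcal S_a$ be a minimizing sequence; after Schwarz symmetrization, which preserves $\int|f(\cdot)|^s$ (because $f$ is odd and $|f|$ depends monotonically on $|v|$) and decreases $\|\nabla\cdot\|_2$, it may be taken radial and nonincreasing. Standard arguments combining the mass constraint with the lower-bound step give a uniform $H^1$ bound, so up to a subsequence $v_n\weak v$ in $H^1_{rad}(\R^N)$ with strong convergence in $L^s(\R^N)$ for every $s\in(2,2^*)$ and a.e.\ convergence. The vanishing case $v\equiv 0$ is excluded because the lower-bound estimate then forces $\liminf\Phi(v_n)\ge 0$, contradicting $m(a)<0$. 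The delicate issue is the mass-loss scenario $0<\int f(v)^2=b<a$, caused by the $L^2$-critical $q$-term: I plan to rule it out by establishing strict subadditivity $m(a)<m(b)+m(a-b)$, obtained by differentiating $t\mapsto\Phi(v_t)$ at $t=1$ (exploiting the Pohozaev identity \eqref{e2}) and rescaling $v$ into $\mathcal S_a$ to produce a competitor of strictly smaller energy. Once the full mass is captured, lower semicontinuity of $\|\nabla\cdot\|_2$ combined with strong $L^s$ convergence upgrades the weak limit to a minimizer of $m(a)$.
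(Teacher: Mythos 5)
Your part (i), lower bound, and negativity steps are essentially the paper's own arguments (Lemmas \ref{lemma3.1}--\ref{lemma3.3}), up to a harmless variant: you estimate the subcritical term by Lemma \ref{lemma1.1} applied to $f(v)$, while the paper applies Lemma \ref{lemma1.2} to $f^2(v)$ for both terms; either works since $p<2+\frac{4}{N}$ in part (ii). Your compactness route also genuinely differs: you symmetrize and use the compact radial embedding, whereas the paper works in $H^1(\R^N)$ with Lions' concentration compactness and translations (Lemma \ref{lemma3.5}). Your route is viable, and you correctly observe that radial compactness alone cannot prevent mass loss, since the constraint $\|f(v)\|_2^2=a$ lives at the $L^2$ level where the radial embedding is not compact, so the dichotomy case must still be excluded by subadditivity, exactly as in the paper.

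The genuine gap is in your mechanism for strict subadditivity. The fiber map $v_t=f^{-1}(t^{N/2}f(v(tx)))$ is \emph{mass-preserving} on $\mathcal{S}_a$; differentiating $t\mapsto\Phi(v_t)$ at $t=1$ yields precisely the Pohozaev-type identity \x{e2} and therefore gives no comparison whatsoever between $m$ at different masses, so it cannot produce $m(a)<m(b)+m(a-b)$. What is needed is a \emph{mass-changing} dilation: the paper uses $\bar{v}(x)=v\left(\alpha^{-1/N}x\right)$, for which $\|f(\bar{v})\|_2^2=\alpha\|f(v)\|_2^2$ and $\|\nabla\bar{v}\|_2^2=\alpha^{1-2/N}\|\nabla v\|_2^2$, giving $m(\alpha a)\leq\alpha m(a)$ for $\alpha>1$ (see \x{b8}) and hence the non-strict subadditivity of Lemma \ref{lemma3.4}(i). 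Moreover, the \emph{unconditional} strict inequality you assert is not available a priori: along a mere minimizing sequence the scaling argument only yields the non-strict version, and the paper gets strictness only conditionally on attainment (Lemma \ref{lemma3.4}(ii)). Its attainment proof is structured around exactly this conditionality: assuming $b:=\|f(\tilde{v}_0)\|_2^2<a$, it first shows $\Phi(\tilde{v}_0)=m(b)$ (otherwise the non-strict chain already gives $m(a)>m(a-b)+m(b)\geq m(a)$, a contradiction as in \x{section14}), so $m(b)$ is attained, and only then invokes the strict inequality to conclude. Your sketch can be repaired either by adopting this two-case argument, or by noting that $m(a)<0$ together with the coercivity estimate forces $\liminf_n\|\nabla v_n\|_2^2>0$ along minimizing sequences, which makes the dilation inequality strict without attainment; but as written, the pivotal step would fail.
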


\begin{remark}
Although the existence of normalized solutions for quasilinear Schr\"{o}dinger equations with combined nonlinearities has been studied in \cite{ML}, a more refined analysis of the minimization problem was not conducted. But in this paper, we give a refined analysis on the minimization problem and give the existence of normalized solution with $2<p<2+\frac{4}{N}<4+\frac{4}{N}=q<2\cdot2^*$. To our knowledge, this result has never been considered before.
\end{remark}

Now, we give the second result of  this paper.
\begin{theorem}\label{theorem1.2}
Assume that $N\geq2$ and $2<p<2+\frac{4}{N}<4+\frac{4}{N}<q\leq2^*$.
Then there exists $a^*_N>0$ such that for all $a\in(0,a^*_N)$,

(i) problem \x{p} has  a radial local minimizer;

(ii) problem \x{p} admits  a radial mountain pass type normalized solution distinct from the local minimizer.
\end{theorem}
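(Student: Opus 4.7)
I would work throughout in the dual formulation, on $\mathcal{S}_a$ restricted to radial functions to recover compactness. The key tool is Lemma~\ref{lemma1.2} applied to $u = f^2(v)$: since $\int_{\R^N} f^2(v)\,\text{d}x = a$ and $|\nabla f^2(v)| \leq \sqrt{2}\,|\nabla v|$ (because $|ff'| \leq 1/\sqrt{2}$ by \x{function}), for every $r \in (2,22^*)$ one obtains
\[
\int_{\R^N} |f(v)|^r \,\text{d}x \leq C_{N,r}\, a^{\alpha_r}\, \|\nabla v\|_2^{\beta_r},
\]
with $\beta_r < 2$ exactly when $r < 4+\tfrac{4}{N}$. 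Hence the $p$-term of $\Phi$ is mass-subcritical while the $q$-term is mass-supercritical in this scaling.

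\textbf{Part (i): local minimizer.} These bounds imply that $g(R) := \inf\{\Phi(v) : v \in \mathcal{S}_a,\ \|\nabla v\|_2^2 = R\}$ is strictly positive on some interval $(R_0, R_1)$ once $a$ is small enough, so $\Phi$ has a well-shaped geometry. Set
\[
m^{\rm loc}(a) := \inf\bigl\{\Phi(v) : v \in \mathcal{S}_a,\ \|\nabla v\|_2^2 \leq R_1\bigr\}.
\]
Testing on a suitably rescaled radial profile yields $m^{\rm loc}(a) < 0$. A radial minimizing sequence is bounded in $H^1(\R^N)$; combining the compact embedding $H^1_{\rm rad}(\R^N) \hookrightarrow L^r(\R^N)$ for $r \in (2,2^*)$ with the pointwise bound $|f(s)|^r \lesssim |s|^r + |s|^{r/2}$ gives convergence of the nonlinear integrals and of the mass $\int f^2(v_n)\,\text{d}x$. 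Weak lower semicontinuity of the Dirichlet energy then produces a minimizer $\bar v$, whose Dirichlet norm lies strictly below $R_1$ by the well structure; hence $\bar v$ is a constrained critical point of $\Phi$ on $\mathcal{S}_a$, and $\bar u = f(\bar v)$ solves \x{p}.

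\textbf{Part (ii): mountain pass.} I would apply the monotonicity trick of Chang et al.\ to the family $\{\Phi_\theta\}_{\theta \in [1/2,1]}$. Take $\bar v$ from part~(i) as the initial endpoint. By the stretching $v_t(x) = f^{-1}\bigl(t^{N/2} f(v(tx))\bigr)$, which preserves $\mathcal{S}_a$, the identity \x{e1} shows $\Phi_\theta(\bar v_t) \to -\infty$ as $t \to \infty$ uniformly in $\theta$, so one fixes $t_1$ large with $v_1 := \bar v_{t_1}$ satisfying $\|\nabla v_1\|_2^2 > R_1$ and $\Phi_\theta(v_1) < m^{\rm loc}(a) - 1$. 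Define
\[
c_\theta := \inf_{\gamma \in \Gamma} \max_{s \in [0,1]} \Phi_\theta(\gamma(s)), \qquad \Gamma := \{\gamma \in C([0,1], \mathcal{S}_a) : \gamma(0) = \bar v,\ \gamma(1) = v_1\}.
\]
Every path in $\Gamma$ crosses $\{\|\nabla v\|_2^2 = R_1\}$, hence $c_\theta > m^{\rm loc}(a)$. The map $\theta \mapsto c_\theta$ is nonincreasing and therefore a.e.\ differentiable; the monotonicity principle then furnishes, for a.e.\ $\theta$, a bounded Palais-Smale sequence at level $c_\theta$ that additionally satisfies $\mathcal{P}_\theta \to 0$. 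Radial compactness together with the Pohozaev control yields a nontrivial critical point $v^\theta$ of $\Phi_\theta$ on $\mathcal{S}_a$ at level $c_\theta$. Finally take $\theta_k \uparrow 1$; the identities $\Phi_{\theta_k}(v^{\theta_k}) = c_{\theta_k}$ and $\mathcal{P}_{\theta_k}(v^{\theta_k}) = 0$ provide uniform $H^1$-bounds, and passing to the limit yields the mountain pass normalized solution, distinct from $\bar v$ because its energy equals $c_1 > m^{\rm loc}(a) = \Phi(\bar v)$.

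\textbf{Main obstacle.} The most delicate step is the final passage $\theta_k \to 1$: I must control the Lagrange multipliers $\lambda_{\theta_k}$, rule out vanishing of the limit after translation (using nontriviality of $c_\theta$ and the Pohozaev identity), and pass to the limit inside the quasilinear weak form via a Brezis-Lieb splitting adapted to the composite nonlinearity $|f(\cdot)|^{r-2}f(\cdot)f'(\cdot)$. In the Sobolev-critical case $q = 2^*$ a further strict estimate on $c_1$ (of the form $c_1 < \tfrac{1}{N} S^{N/2}$ or its quasilinear analogue) is needed to prevent energy loss through an Aubin-Talenti bubble; this is secured by choosing a truncated bubble composed with $f$ as competitor, which forces $a$ to be small and accounts for the sharper upper bound $q \leq 2^*$ imposed in part~(ii).
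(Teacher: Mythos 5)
Two steps in your plan are genuinely flawed, and the first is a real gap in part (i). You claim that the compact embedding $H^1_{r}(\R^N)\hookrightarrow L^r(\R^N)$, $r\in(2,2^*)$, yields convergence of the mass $\int_{\R^N}f^2(v_n)\,\text{d}x$ along a radial minimizing sequence. It does not: on the set $\{|v_n|\leq1\}$ one has $f^2(v_n)\simeq v_n^2$ by Lemma~\ref{lemma2.1}, so the constraint behaves there like the $L^2$-norm, and the embedding into $L^2$ is not compact even for radial functions --- a minimizing sequence can let part of its mass spread to infinity, leaving a weak limit $\bar v$ with $\|f(\bar v)\|_2^2<a$, hence $\bar v\notin\mathcal{S}_a$. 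Some device must exclude this dichotomy, and your proposal contains none. The paper supplies exactly this ingredient: subadditivity $\sigma_\theta(a)\leq\sigma_\theta(b)+\sigma_\theta(a-b)$ with strict inequality when one term is attained (Lemma~\ref{lemma4.4}), continuity of $a\mapsto\sigma_\theta(a)$ (Lemma~\ref{lemma4.5}), and a Brezis--Lieb splitting of $\Phi_\theta$ and of the constraint inside a concentration-compactness argument (Lemma~\ref{lemma4.6}); note that this runs in all of $H^1(\R^N)$ after translations, so radial symmetry is not what rescues part (i).

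Second, your diagnosis of the restriction $q\leq2^*$ in part (ii) is wrong. In the dual variable $|f(v)|^q\lesssim|v|^{q/2}$, so the Sobolev-critical exponent for $\Phi_\theta$ is $q=22^*$, not $q=2^*$; for $q\leq2^*<22^*$ the functional is energy-subcritical and no Aubin--Talenti bubble or strict level estimate of the form $c_1<\frac1N S^{N/2}$ enters anywhere. The actual role of $q\leq2^*$ (see Lemma~\ref{lemma4.8} and the remark following Theorem~\ref{theorem1.2}) is the sign of the Lagrange multiplier: testing the limit equation with $f(v_\theta)/f'(v_\theta)$ and subtracting the Pohozaev identity $\mathcal{P}_\theta(v_\theta)=0$ from \x{e2} gives
\begin{equation*}
\lambda_\theta\|f(v_\theta)\|_2^2=-\frac{N-2}{2}\int_{\R^N}\frac{2f^2(v_\theta)}{1+2f^2(v_\theta)}|\nabla v_\theta|^2\,\text{d}x+\theta\Big[\frac{N(p-2)}{2p}-1\Big]\|f(v_\theta)\|_p^p+\theta\Big[\frac{N(q-2)}{2q}-1\Big]\|f(v_\theta)\|_q^q,
\end{equation*}
and $\frac{N(q-2)}{2q}-1\leq0$ precisely when $q\leq2^*$; the resulting negativity $\lambda_\theta<0$ is what makes $\|\nabla(v_n-v_\theta)\|_2^2-\lambda_\theta\|v_n-v_\theta\|_2^2$ coercive and drives the strong $H^1$-convergence via the Fang--Szulkin-type estimate \x{e18}. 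Your bubble competitor is unnecessary at $q=2^*$ and does not repair the case $2^*<q<22^*$, which the paper leaves open for exactly this multiplier-sign reason. Two smaller corrections: Theorem~\ref{theorem2.1} does not furnish $\mathcal{P}_\theta\to0$ along the Palais--Smale sequence --- the Pohozaev identity is recovered only for the limiting critical point through Lemma~\ref{lemmas2.1} --- and your endpoint $\gamma(0)=\bar v$ is not $\theta$-safe: since $\Phi_\theta(\bar v)=\Phi_1(\bar v)+(1-\theta)B(\bar v)$ may be positive for $\theta<1$, the uniform separation $c_\theta>\max\{\Phi_\theta(\gamma(0)),\Phi_\theta(\gamma(1))\}$ required by the monotonicity trick is unverified; one should instead take a $\theta$-independent point with $\Phi_{1/2}<0$ inside the well, e.g.\ a small-$s$ rescaling $v_s$ as in Lemma~\ref{lemma4.3}.
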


\begin{remark}
(i) As pointed out in \cite{CRSZ}, to seek a mountain pass type normalized solution, the general method is to find the identity relationship between the energy functional and the Pohazaev functional. But in this place,  the identity relationship between the energy functional and the Pohazaev functional is not very clear, since $f$ is nonlinear. So it is very difficult for us to construct  a bounded Palais-Smale-Pohozaev sequence as \cite{Chen1,Chen3,JSL,Jean}. To overcome this  difficult, we use the the monotonicity trick  developed in \cite{BCJS,CJS,Jean1,SZ,ZP} to get a almost every bounded Palais-Smale sequence for a family of $\mathcal{C}^2$-functional.

(ii) The results of Theorem \ref{theorem1.2} were already proved in \cite{HQZ1,ML}. Note that the authors \cite{ML} mainly use a perturbation method, which is very complex,  especially in limit analysis and $L^\infty$-estimation. Moreover, the authors \cite{HQZ1} mainly  adopt a new method, which depends on
 the implicit function theorem and fiber mapping.  In this paper, we present a new proof based on the dual method  and Lagrange Multipliers' rule. Compared with \cite{HQZ1,ML}, our approach is simpler, since it does not rely on   $L^\infty$-estimation, the implicit function theorem or similar technical tools.

(iii)
The existence of mountain pass type normalized solutions for the range $2<p<2+\frac{4}{N}<2^*<q<22^*$
remains an open problem under the dual method framework. The primary obstacle is proving that the Lagrange multiplier is negative, which is a critical requirement to establish the compactness of Palais-Smale sequences.

(iv) The results of this paper  can be regarded as a further study and outlook on the literature \cite{CRSZ}.

(v) In Theorem  \ref{theorem1.2}, since $4+\frac{4}{N}<q\leq2^*$, it shows that $2\leq N\leq 3$.
\end{remark}

 \vskip2mm
  \par
The paper is organized as follows.  In section 2, we give some preliminary lemmas. In section 3, we will prove Theorem  \ref{theorem1.1}.  In section 4, we shall prove Theorem  \ref{theorem1.2}.
  \vskip2mm
  \par\noindent

  \vskip2mm
  \par\noindent
{\bf Notation:} Throughout this paper, the notations we need to use are as follows:

\quad $\bullet$ $\mathcal{X}$ is a Banach Space and $\mathcal{X}^*$ denotes the dual space of $\mathcal{X}$.

\quad $\bullet$ $L^q(\R^N)$ denotes the Lebesgue space with the norm
$$
\|u\|_q=\left(\int_{\R^N}|u|^q \text{d}x\right)^\frac{1}{q}
$$
 for $q\in[1,+\infty)$.

\quad $\bullet$
$
2^*=\left\{
\begin{array}{ll}
\frac{2N}{N-2}, & \textrm{if $N\geq3$},\\
+\infty,& \textrm{if $N=1,2$}.
\end{array} \right.\\
$

\quad $\bullet$ $C$ denote various positive constants which may vary from line to line.

\quad $\bullet$
 Let $
H^{1}(\R^N)=\left\{u\in L^{2}(\R^N) : |\nabla u|\in L^2(\R^N)\right\}$
with the norm $$\|u\|=\left(\int_{\R^N}(|\nabla u|^2+u^2)\text{d}x\right)^{\frac{1}{2}}.$$

\quad $\bullet$
Define
$
H^{1}_r(\R^N)=\left\{u\in H^{1}(\R^N) : u(x)=u(|x|)\ \ \text{a.e. in}\ \ \R^N\right\}.$

\section{Preliminary lemmas}
In this section, we need to give some preliminary results. At first,
let us recall some properties of the change of variables $f : \R\rightarrow\R$,  which are proved in \cite{CL1,LWW}.
\begin{lemma}\label{lemma2.1}(\cite{CL1,LWW})
 The function  $f(t)$ defined by \x{function} and its derivative have the following properties: \\
$(1)$ $f$  is uniquely defined, $\mathcal{C}^\infty$  and invertible;\\
$(2)$ $|f'(t)|\leq1$   for all  $t\in\R$;\\
$(3)$ $|f(t)|\leq|t|$   for all  $t\in\R$;\\
$(4)$ $f(t)/t\rightarrow1$  as  $t\rightarrow0$;\\
$(5)$ $f(t)/\sqrt{t}\rightarrow2^{\frac{1}{4}}$ as $t\rightarrow+\infty$;\\
$(6)$ $f(t)/2\leq tf'(t)\leq f(t)$ for all  $t>0$;\\
$(7)$ $f^2(t)/2\leq tf(t)f'(t)\leq f^2(t)$   for all  $t\geq0$;\\
$(8)$ $|f(t)|\leq2^{1/4}|t|^{1/2}$  for all $t\in\R$;\\
$(9)$ {\it there exists a positive constant} $C$ {\it such that}
\begin{displaymath}
|f(t)|\geq\left\{
\begin{array}{ll}
C|t|, & \textrm{if $|t|\leq 1$},\\
C|t|^{\frac{1}{2}},& \textrm{if $|t|\geq1$};
\end{array} \right.\\
\end{displaymath}
$(10)$ $|f(t)f'(t)|\leq1/\sqrt{2}$  for all $t\in\R$.
 \end{lemma}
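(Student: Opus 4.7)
The plan is to derive all ten properties from two sources: (a) the defining ODE $f'(t) = (1+2f^2(t))^{-1/2}$ with $f(0)=0$, and (b) the integral representation obtained by inverting it, namely
\[
t \;=\; F(f(t)), \qquad F(u) := \int_0^u \sqrt{1+2s^2}\,\mathrm{d}s.
\]
Together with the closed-form identity $2F(u) = u\sqrt{1+2u^2} + \int_0^u(1+2s^2)^{-1/2}\,\mathrm{d}s$, obtained by one integration by parts, this reduces every claim to an elementary estimate. The only place I expect to pause is property (6), which is the key lemma from which (7) and (8) follow almost for free.

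First I would dispatch the soft properties. For (1), global existence and $\mathcal{C}^\infty$ regularity on $[0,\infty)$ follow from Cauchy--Lipschitz applied to the smooth, bounded RHS; invertibility is immediate from $f'>0$; the odd extension preserves smoothness because $f'$ is even. For (2), $(1+2f^2)^{-1/2}\le 1$ by inspection; (3) is then the mean value theorem applied to $f$ starting from $f(0)=0$; (4) uses $f'(0)=1$; and (5) follows from the fact that $F(u) \sim u^2/\sqrt{2}$ as $u\to\infty$, inverting to $f(t)\sim 2^{1/4}\sqrt{t}$. For (10) I would substitute $y=f(t)$ and compute that $y/\sqrt{1+2y^2}$ is increasing in $|y|$ with supremum $1/\sqrt{2}$.

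The heart of the argument is (6), $f(t)/2\le tf'(t)\le f(t)$ for $t>0$. I would translate it via $u=f(t)$ into the purely algebraic statement $\tfrac{u}{2}\sqrt{1+2u^2}\le F(u)\le u\sqrt{1+2u^2}$. Using integration by parts, writing $\tfrac{2s^2}{\sqrt{1+2s^2}} = \sqrt{1+2s^2} - \tfrac{1}{\sqrt{1+2s^2}}$, one obtains the identity
\[
2F(u) \;=\; u\sqrt{1+2u^2} + \int_0^u \frac{\mathrm{d}s}{\sqrt{1+2s^2}}.
\]
Since the last integral is nonnegative and bounded above by $u\le u\sqrt{1+2u^2}$, both inequalities drop out at once. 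Property (7) then follows by multiplying (6) by $f(t)>0$, and (8) follows from the lower bound $\sqrt{1+2s^2}\ge\sqrt{2}\,s$ plugged into $F(u)\ge u^2/\sqrt{2}$, which inverts to $f(t)\le 2^{1/4}t^{1/2}$.

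Finally (9) is a regime split. For $|t|\le 1$, property (3) gives $|f(t)|\le 1$, hence $f'(t)\ge 1/\sqrt{3}$, and integrating from $0$ yields $|f(t)|\ge |t|/\sqrt{3}$. For $|t|\ge 1$, the asymptotic from (5) together with positivity and monotonicity of $f$ on $[1,\infty)$ gives a uniform bound $f(t)\ge c\sqrt{t}$ (take for instance the continuous positive function $f(t)/\sqrt{t}$ on $[1,\infty)$, extended by its limit $2^{1/4}$ at $\infty$, which has a positive minimum). The odd symmetry handles $t<0$. I do not anticipate an obstacle beyond property (6); the rest is bookkeeping on top of that single identity for $F(u)$.
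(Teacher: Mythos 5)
Your proposal is correct: the identity $2F(u)=u\sqrt{1+2u^2}+\int_0^u(1+2s^2)^{-1/2}\,\mathrm{d}s$ does yield both halves of (6) at once, and the remaining properties follow from the ODE, the inverse function $F$, and odd symmetry exactly as you outline. The paper itself gives no proof --- it quotes this lemma from \cite{CL1,LWW} --- and your self-contained derivation follows essentially the same standard route (the defining ODE plus inversion via $t=F(f(t))$) used in those references, so there is nothing further to flag.
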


 \begin{lemma}\label{lemma2.3}(\cite{CRSZ})
 (i) For any $v\in H^1(\R^N)$,  there holds $f(v)\in H^1(\R^N)$.

(ii) The mapping $v\mapsto f(v)$ from $H^1(\R^N)$  into $L^q(\R^N)$ is continuous for $q\in[2,22^*]$.

(iii) The mapping $v\mapsto f(v)$ from $H^1(\R^N)$  into $H^1(\R^N)$  is continuous.
\end{lemma}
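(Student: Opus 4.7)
The argument rests on the pointwise bounds collected in Lemma \ref{lemma2.1}. For part (i), since $|f(v)|\leq|v|$ by property (3), $f(v)\in L^2(\R^N)$ whenever $v\in L^2(\R^N)$. Moreover, $f\in C^\infty$, so distributionally $\nabla f(v)=f'(v)\nabla v$; property (2) then yields $|\nabla f(v)|\leq|\nabla v|$, so $\nabla f(v)\in L^2(\R^N)$. Therefore $f(v)\in H^1(\R^N)$.

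For part (ii), I will establish continuity at the two endpoints $q=2$ and $q=22^*$ and then interpolate. Since $|f'|\leq 1$, $f$ is globally $1$-Lipschitz, hence $\|f(v_n)-f(v)\|_2\leq\|v_n-v\|_2$, which gives continuity at $q=2$. At $q=22^*=2\cdot 2^*$, property (8) yields $|f(v)|^{22^*}\leq 2^{22^*/4}|v|^{2^*}$; extracting an a.e.\ convergent subsequence of $v_n$, one has $f(v_n)\to f(v)$ a.e., while the Sobolev embedding $H^1(\R^N)\hookrightarrow L^{2^*}(\R^N)$ (with the standard analogue when $N=2$) makes $\{|v_n|^{2^*}\}$ equi-integrable. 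Vitali's convergence theorem then gives $f(v_n)\to f(v)$ in $L^{22^*}$, and a standard sub-subsequence argument upgrades this to convergence of the full sequence. For general $q\in(2,22^*)$, the interpolation inequality $\|g\|_q\leq\|g\|_2^{\theta}\|g\|_{22^*}^{1-\theta}$ transfers continuity from the endpoints to the whole interval.

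For part (iii), continuity in $H^1$ follows by combining part (ii) at $q=2$ with convergence of gradients. Writing
\begin{equation*}
\nabla f(v_n)-\nabla f(v)=f'(v_n)(\nabla v_n-\nabla v)+\bigl(f'(v_n)-f'(v)\bigr)\nabla v,
\end{equation*}
the first summand is bounded pointwise by $|\nabla v_n-\nabla v|$ and therefore vanishes in $L^2$. For the second summand, pass to a subsequence with $v_n\to v$ a.e.; continuity of $f'$ gives $f'(v_n)\to f'(v)$ a.e., while the squared integrand is dominated by $4|\nabla v|^2\in L^1(\R^N)$, so dominated convergence closes the estimate. A routine subsequence argument recovers convergence of the full sequence.

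The only genuinely subtle step is the $q=22^*$ endpoint in part (ii): because $f$ interpolates between linear behaviour near the origin (property (4)) and square-root growth at infinity (property (5)), the Lipschitz bound is too weak at this exponent. Using the pointwise estimate (8) together with equi-integrability of $\{|v_n|^{2^*}\}$ in $L^1$ circumvents the obstacle, at the cost of invoking Sobolev embedding and Vitali's theorem rather than a purely elementary argument.
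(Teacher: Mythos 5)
Your proof is correct. Note first that the paper itself gives no argument for this lemma: it is quoted from \cite{CRSZ}, so there is no internal proof to compare against; your write-up is a sound, self-contained reconstruction using exactly the pointwise properties of $f$ collected in Lemma \ref{lemma2.1}, which is in the spirit of how such facts are established in the dual-method literature. Parts (i) and (iii) are routine and handled correctly (the chain rule $\nabla f(v)=f'(v)\nabla v$ is legitimate since $f\in\mathcal{C}^1$ with $|f'|\leq1$, and your splitting of $\nabla f(v_n)-\nabla f(v)$ with dominated convergence plus a sub-subsequence argument is standard). You correctly identify the endpoint $q=22^*$ as the only delicate point, and your route via property (8), Sobolev embedding, equi-integrability of $\{|v_n|^{2^*}\}$ (which follows from $v_n\to v$ in $L^{2^*}$, giving both uniform integrability and tightness on $\R^N$), and Vitali's theorem is valid for $N\geq3$; for $N=2$ one must read $22^*=+\infty$ and interpret the range as $q\in[2,\infty)$, which your interpolation step covers since $H^1(\R^2)\hookrightarrow L^r(\R^2)$ for all finite $r$. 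One simplification worth recording: since $f$ is odd, increasing, and concave on $[0,\infty)$ with $f(0)=0$, subadditivity together with property (8) yields the global H\"older bound $|f(s)-f(t)|\leq 2^{3/4}|s-t|^{1/2}$ for all $s,t\in\R$, whence $\|f(v_n)-f(v)\|_{22^*}^{22^*}\leq C\|v_n-v\|_{2^*}^{2^*}\rightarrow0$ directly from the Sobolev inequality; this disposes of the endpoint without subsequences, Vitali's theorem, or equi-integrability, and makes the whole of part (ii) elementary.
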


By   \cite{FS,XW}, we can obtain the following lemma.
\begin{lemma}\label{lemmas2.4}
For any $v\in H^1(\R^N)$, there exists $C>0$ such that
\begin{equation*}
\int_{\R^N}(|\nabla v|^2+|f(v)|^2)\text{d}x\geq C\int_{\R^N}(|\nabla v|^2+v^2)\text{d}x.
\end{equation*}
 \end{lemma}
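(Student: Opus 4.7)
The easy direction $\int(|\nabla v|^2+|f(v)|^2)\le\int(|\nabla v|^2+v^2)$ follows from $|f(t)|\le|t|$ in Lemma \ref{lemma2.1}(3); the content of the lemma is the reverse control of $\int v^2$ by $\int(|\nabla v|^2+|f(v)|^2)$, which is non-trivial because $f$ only grows like $\sqrt{|t|}$ for large $t$. The strategy is to split $\R^N$ into the ``small-value'' region $A=\{|v|\le 1\}$ and the ``large-value'' region $B=\{|v|>1\}$, derive the pointwise control from Lemma \ref{lemma2.1}(9), and then handle the contribution of $B$ by a Gagliardo–Nirenberg estimate for $w:=f(v)^2$.

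\textbf{Step 1 (Pointwise splitting).} By property (9) of Lemma \ref{lemma2.1}, $|f(v)|\ge C|v|$ on $A$ and $|f(v)|\ge C|v|^{1/2}$ on $B$. Hence
\[
v^2 \le C^{-2}|f(v)|^2\mathbf{1}_A + C^{-4}|f(v)|^4\mathbf{1}_B,
\]
so that $\int_{\R^N} v^2\,\mathrm{d}x \le C_1\int_{\R^N}|f(v)|^2\,\mathrm{d}x + C_2\int_{\R^N}|f(v)|^4\,\mathrm{d}x$. It then suffices to bound $\int|f(v)|^4$ by the target quantity.

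\textbf{Step 2 (Controlling the quartic term).} Set $w=f(v)^2$. Since $\nabla w = 2f(v)f'(v)\nabla v$, property (10) of Lemma \ref{lemma2.1} gives $|\nabla w|\le \sqrt{2}\,|\nabla v|$, whence $\|\nabla w\|_2^2\le 2\|\nabla v\|_2^2$. Observe also that $\|w\|_1=\|f(v)\|_2^2$ and $\|w\|_2^2=\|f(v)\|_4^4$. I would then apply Gagliardo–Nirenberg to $w$: for $N\ge 2$,
\[
\|w\|_2 \le C\,\|\nabla w\|_2^{\,N/(N+2)}\,\|w\|_1^{\,2/(N+2)},
\]
which translates into $\|f(v)\|_4^4 \le C\,\|\nabla v\|_2^{\,2N/(N+2)}\,\|f(v)\|_2^{\,8/(N+2)}$. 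A Young-type inequality then absorbs the gradient factor into $\int|\nabla v|^2$ and produces a bound in terms of $\int|\nabla v|^2+\int|f(v)|^2$, giving the claim after rearrangement.

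\textbf{Main obstacle.} The delicate point is Step 2: the Gagliardo–Nirenberg bound produces a \emph{nonlinear} combination of $\|\nabla v\|_2$ and $\|f(v)\|_2$, so closing the estimate to the stated linear form requires careful use of Young's inequality (and, if needed, an iteration on the Lebesgue exponents of $f(v)$, using $|\nabla(f(v)^2)|\in L^2$ to bootstrap $f(v)\in L^q$ for $q$ up to $2\cdot 2^*$). This is precisely where the quasilinear structure — encoded in property (10) of $f$ — is essential, since without the uniform bound $|f(v)f'(v)|\le 1/\sqrt{2}$ one could not transfer control from $\int|\nabla v|^2$ to $\int|\nabla(f(v)^2)|^2$ in a dimension-free manner.
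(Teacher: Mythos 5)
Your Steps 1--2 (the splitting via Lemma \ref{lemma2.1}(9), the substitution $w=f(v)^2$ with $|\nabla w|\le\sqrt2|\nabla v|$ from Lemma \ref{lemma2.1}(10), and the Gagliardo--Nirenberg bound $\|f(v)\|_4^4\le C\|\nabla v\|_2^{2N/(N+2)}\|f(v)\|_2^{8/(N+2)}$) are correct and are essentially the argument behind the cited sources \cite{FS,XW} (the paper itself gives no proof, only the citation). The gap is exactly at the point you flag and defer: the final ``Young-type absorption'' cannot produce the stated linear bound with a constant independent of $v$. Applying Young with the conjugate pair $\frac{N+2}{N},\frac{N+2}{2}$ gives
\begin{equation*}
\|f(v)\|_4^4\;\le\;\varepsilon\|\nabla v\|_2^2+C_\varepsilon\|f(v)\|_2^4,
\end{equation*}
and the \emph{quartic} power $\|f(v)\|_2^4$ cannot be reduced to $\|f(v)\|_2^2$ by any iteration on Lebesgue exponents: the obstruction is one of scaling, not integrability, since $f(v)^2$ grows only like $|v|$ where $v$ is large. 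Indeed, a $v$-independent constant is impossible: let $v_R=R$ on $B_R(0)$, zero outside $B_{R+1}(0)$, linear in between. Then $\int|\nabla v_R|^2\sim R^{N+1}$, $\int f(v_R)^2\sim\sqrt2\,R^{N+1}$ (by Lemma \ref{lemma2.1}(5), $f(t)^2\sim\sqrt2\,t$), while $\int v_R^2\sim R^{N+2}$, so the claimed inequality would force $C\lesssim R^{-1}\to0$.

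What is true --- and what the lemma's (admittedly loosely quantified) statement must mean, since ``for any $v$ there exists $C$'' is otherwise vacuous --- is the version uniform on bounded sets: setting $X:=\|\nabla v\|_2^2+\|f(v)\|_2^2$, your Step 2 yields $\|v\|_2^2\le C_1X+C_2X^{(N+4)/(N+2)}\le\bigl(C_1+C_2\|v\|^{4/(N+2)}\bigr)X$, using $X\le\|v\|^2$ from Lemma \ref{lemma2.1}(3); hence the lemma holds with $C$ depending on $\|v\|_{H^1}$. This is all that is needed at the only place the lemma is invoked, namely in Lemma \ref{lemma4.6} (and again in the proof of Theorem \ref{theorem1.2}(ii)), where it is applied to a \emph{bounded} sequence $\tilde w_n$ with $\int(|\nabla\tilde w_n|^2+f^2(\tilde w_n))\to0$ to conclude $\tilde w_n\to0$ in $H^1(\R^N)$. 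So your decomposition and key estimates match the intended proof, but as written your proposal claims (and the attempted closing step would require) a universal constant, and in that form the statement is false; you must make the dependence of $C$ on $\|v\|_{H^1}$ explicit, or restate the inequality on bounded sets or as the sequential implication actually used.
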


 \begin{lemma}\label{lemmas2.1}(\cite{CRSZ})
Any critical point $v$ of $\Phi_\theta|'_{\mathcal{S}_a}$ satisfies $\mathcal{P}_\theta(v)=0$.
\end{lemma}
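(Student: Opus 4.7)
The plan is to use the stretching map $v_t(x) := f^{-1}(t^{N/2} f(v(tx)))$ introduced earlier in the paper, which is engineered precisely to preserve the $L^2$-constraint on $f(v)$. First I would verify $v_t \in \mathcal{S}_a$ for every $t > 0$: from the defining identity $f(v_t(x)) = t^{N/2} f(v(tx))$, a change of variables $y = tx$ gives
\[
\int_{\R^N} |f(v_t)|^2\, dx \;=\; t^N \int_{\R^N} |f(v(tx))|^2\, dx \;=\; \int_{\R^N} |f(v)|^2\, dx \;=\; a,
\]
and clearly $v_1 = v$. Therefore $t \mapsto v_t$ is a one-parameter family of admissible comparison functions passing through $v$ at $t=1$.

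Since $v$ is a critical point of $\Phi_\theta$ restricted to $\mathcal{S}_a$, the scalar function $g(t) := \Phi_\theta(v_t)$ must satisfy $g'(1) = 0$: indeed, $\tfrac{d}{dt} v_t|_{t=1}$ is tangent to $\mathcal{S}_a$ at $v$ by the preceding step, so the Lagrange multiplier contribution drops out because the constraint $\int |f(v)|^2 dx$ is constant along the curve. I would then appeal to the closed-form expression \x{e1} and differentiate term-by-term at $t = 1$; matching powers of $t$ and using $\frac{d}{dt}\left[t^{N(s-2)/2}\right]|_{t=1} = \frac{N(s-2)}{2}$ for $s \in \{p,q\}$, while the kinetic term contributes both $\int |\nabla v|^2 dx$ (from the outer $t^2$) and $\frac{N}{2}\int \frac{2f^2(v)}{1+2f^2(v)} |\nabla v|^2 dx$ (from the $t^N$ inside the fraction), reproduces exactly the quantity $\mathcal{P}_\theta(v)$ declared in \x{e2}. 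This yields $\mathcal{P}_\theta(v) = 0$.

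The main technical point requiring care is the legitimacy of differentiation under the integral sign in \x{e1}, since both the kinetic integrand and the potential integrands depend on $t$ nonlinearly through $f$. The pointwise bounds collected in Lemma \ref{lemma2.1}, in particular $|f(s)| \leq 2^{1/4}|s|^{1/2}$ and $|f'(s)| \leq 1$, together with $v \in H^1(\R^N)$, furnish $t$-uniform integrable dominants on a small neighborhood of $t = 1$, so dominated convergence delivers the required $C^1$ regularity of $g$. Once this is in hand, the proof collapses to the direct bookkeeping already carried out between \x{e1} and \x{e2}.
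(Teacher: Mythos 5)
Your reduction to the scalar function $g(t):=\Phi_\theta(v_t)$ is fine as bookkeeping, and in fact easier than you make it: after the change of variables $y=tx$, all $t$-dependence in \x{e1} is explicit, since the kinetic term splits as $\frac{t^2}{2}\int_{\R^N}\frac{|\nabla v|^2}{1+2f^2(v)}\,\text{d}x+\frac{t^{N+2}}{2}\int_{\R^N}\frac{2f^2(v)|\nabla v|^2}{1+2f^2(v)}\,\text{d}x$, so $g$ is a smooth function of $t>0$ with coefficients given by fixed finite integrals, and your dominated-convergence discussion is a non-issue; moreover $g'(1)=\mathcal{P}_\theta(v)$ does hold. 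The genuine gap is the step $g'(1)=0$. You justify it by saying that $\frac{\text{d}}{\text{d}t}v_t\big|_{t=1}$ is tangent to $\mathcal{S}_a$, so constrained criticality kills the derivative. But that chain-rule argument requires $t\mapsto v_t$ to be differentiable \emph{as a curve in $H^1(\R^N)$} at $t=1$, and it is not for a general critical point: differentiating $f(v_t(x))=t^{N/2}f(v(tx))$ formally gives $\frac{\text{d}}{\text{d}t}v_t\big|_{t=1}=\frac{N}{2}\frac{f(v)}{f'(v)}+x\cdot\nabla v$, and $x\cdot\nabla v\notin L^2(\R^N)$ for generic $v\in H^1(\R^N)$. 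The tangent vector you invoke does not exist in the ambient space, so the criticality of $\Phi_\theta|_{\mathcal{S}_a}$ at $v$ gives no direct information about $g'(1)$; the identity $g'(1)=0$ is exactly the content of the lemma, not a consequence of criticality alone. This is the same classical obstruction that prevents one from proving the Pohozaev identity for semilinear problems by differentiating the fiber map $t\mapsto t^{N/2}u(t\cdot)$ at a constrained critical point, and making your curve differentiable would require regularity and decay of $v$ that are not available before the equation itself is derived.

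The argument in \cite{CRSZ}, which this paper relies on (and implicitly reuses later, see \x{e14}), goes through the Euler--Lagrange equation instead. Since $v\mapsto\int_{\R^N}|f(v)|^2\text{d}x$ is $C^1$ on $H^1(\R^N)$ with derivative $2f(v)f'(v)\neq0$ for $v\neq0$, the Lagrange multiplier rule yields $\lambda\in\R$ with $\Phi'_\theta(v)=\lambda f(v)f'(v)$ in $(H^1(\R^N))^*$, i.e.\ $v$ weakly solves \x{subsection1}; elliptic regularity then legitimizes the Pohozaev identity $\frac{N-2}{2}\|\nabla v\|^2_2=N\int_{\R^N}\left[\frac{\lambda}{2}f^2(v)+\frac{\theta}{p}|f(v)|^p+\frac{\theta}{q}|f(v)|^q\right]\text{d}x$, while testing the equation with $f(v)/f'(v)$ --- admissible because, by Lemma \ref{lemma2.1}-(6), $|f(s)/f'(s)|\leq2|s|$ and $\frac{\text{d}}{\text{d}s}\left(f(s)/f'(s)\right)=1+\frac{2f^2(s)}{1+2f^2(s)}$ is bounded, in contrast to $x\cdot\nabla v$ --- gives the Nehari-type identity appearing in \x{e14}. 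Eliminating $\lambda$ between these two identities produces precisely $\mathcal{P}_\theta(v)=0$ as defined in \x{e2}. So your scaling computation correctly identifies the \emph{formula} for $\mathcal{P}_\theta$, but without first deriving the equation with its multiplier and enough regularity, the vanishing $g'(1)=0$ remains unproved.
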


To obtain a Palais-Smale sequence at mountain pass level, we will use the monotonicity trick on the family of functionals, which was first proposed by Struwe \cite{MS1,MS2} to solve the specific examples. Afterwards,  Jeanjean \cite{Jean1} gave a more general version for the the family of functionals with unconstrained problem. Very recently, Chang et al. \cite{BCJS,CJS}  extend this skill to prescribed mass problems on a Hilbert space $X$. In this paper, let $X:=H^1(\R^N)$ and the  the monotonicity trick  is given as follows:

\begin{theorem}\label{theorem2.1}(\cite{BCJS,CJS})
Let $(X,\|\cdot\|)$ be a Hilbert space and $\mathbb{I}\subset\R_+$ an interval. Consider the following family of $\mathcal{C}^2$-functional on $X$:
$$
\mathcal{I}_\mu(v)=A(v)-\mu B(v),\    \mu\in\mathbb{I}
$$
with $B(v)\geq0$ and either $A(v)\rightarrow+\infty$ or $B(v)\rightarrow+\infty$ as $\|v\|\rightarrow\infty$. Furthermore, assume that there are two points $v_1,v_2$ (independent of $\mu$) in $\mathcal{S}_a$ such that
\begin{equation*}
c_\mu=\inf_{\gamma\in{\Gamma_\mu}}\max_{t\in[0,1]}\mathcal{I}_\mu(\gamma(t))>\max\{\mathcal{I}_\mu(v_1),\mathcal{I}_\mu(v_2)\}\ \ \text{for all}\,\,\mu\in\mathbb{I}
\end{equation*}
where
$
\Gamma_\mu=\{\gamma\in \mathcal{C}([0,1],\mathcal{S}_a) : \gamma(0)=v_1, \gamma(1)=v_2\}.
$
Then for almost every $\mu\in\mathbb{I}$, there is a sequence $\{v_n\}\subset \mathcal{S}_a$ such that
\begin{itemize}
\item[(i)] $\{v_n\}$ is bounded in $X$;

\item[(ii)] $\mathcal{I}_\mu(v_n)\rightarrow c_\mu$;

\item[(iii)] $\mathcal{I}_\mu|'_{\mathcal{S}_a}(v_n)\rightarrow0$ in the dual $X^*$ of $X$.
\end{itemize}
 \end{theorem}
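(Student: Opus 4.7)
The plan is to follow the Struwe--Jeanjean monotonicity trick adapted to the constrained setting on $\mathcal{S}_a$. The key idea is to exploit the monotonicity of $\mu \mapsto c_\mu$ to identify a large set of parameters at which the min-max level behaves regularly, and then, at a point of differentiability of $c_\mu$, to extract a bounded Palais-Smale sequence by carefully selecting paths that nearly realise the min-max value.

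First I would verify the structural properties of $c_\mu$. Because $B(v) \geq 0$ and the admissible path set $\Gamma_\mu$ is independent of $\mu$, for $\mu_1 \leq \mu_2$ in $\mathbb{I}$ one has $\mathcal{I}_{\mu_2}(v) \leq \mathcal{I}_{\mu_1}(v)$ on $\mathcal{S}_a$. Taking $\max_t$ and then $\inf_\gamma$ yields that $\mu\mapsto c_\mu$ is non-increasing on $\mathbb{I}$, hence differentiable at almost every $\mu\in\mathbb{I}$; fix such a point of differentiability $\mu$.

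Next I would construct the candidate sequence. Pick $\mu_n \nearrow \mu$ strictly, so that $(c_{\mu_n}-c_\mu)/(\mu_n-\mu)\to c_\mu'$. For each $n$, choose $\gamma_n\in\Gamma_\mu$ with
\[
\max_{t\in[0,1]}\mathcal{I}_{\mu_n}(\gamma_n(t)) \leq c_{\mu_n} + (\mu-\mu_n).
\]
On the almost-maximum set
\[
F_n := \Bigl\{t\in[0,1]:\mathcal{I}_\mu(\gamma_n(t))\geq c_\mu-(\mu-\mu_n)\Bigr\},
\]
the identity $\mathcal{I}_\mu = \mathcal{I}_{\mu_n} - (\mu_n-\mu)B$ together with the difference quotient bound on $c_\mu$ produces a uniform bound on $B(\gamma_n(t))$ for $t\in F_n$, by a constant depending only on $|c_\mu'|$. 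The coercivity assumption (either $A\to\infty$ or $B\to\infty$ as $\|v\|\to\infty$) then converts this into a uniform $X$-norm bound on $\{\gamma_n(t):t\in F_n\}$. A quantitative deformation argument on the $\mathcal{C}^2$ Hilbert submanifold $\mathcal{S}_a$ finishes: if no $\{v_n\}\subset\mathcal{S}_a$ with the required bound, energy level $c_\mu$ and vanishing constrained derivative existed in a neighbourhood of $F_n$, then a tangential pseudo-gradient flow would push the near-maximum portion of $\gamma_n$ strictly below $c_\mu$, contradicting the infimum characterisation.

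The main obstacle is the interplay between the constraint and the deformation. One must construct a locally Lipschitz pseudo-gradient vector field \emph{tangent} to $\mathcal{S}_a$ whose flow preserves the constraint, which requires checking that $v\mapsto \int_{\R^N}|f(v)|^2\,\mathrm{d}x$ is $\mathcal{C}^2$ with nowhere vanishing derivative on $\mathcal{S}_a$ (a consequence of \cref{lemma2.3} and Lemma \ref{lemma2.1}). The delicate point inherited from \cite{Jean1} is ensuring that the good set $F_n$ contains the maximum of $\gamma_n$ and has quantitatively controlled size, so that the localised deformation really yields the contradiction; once this is in place, the resulting Palais-Smale sequence automatically inherits the bound coming from the monotonicity-trick estimate on $B$.
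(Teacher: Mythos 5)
The paper contains no internal proof of Theorem \ref{theorem2.1}: it is quoted directly from \cite{BCJS,CJS}, so the benchmark is the proof in those references, which adapts Jeanjean's monotonicity trick \cite{Jean1} to the constrained setting. Your outline reconstructs exactly that argument --- monotonicity and a.e.\ differentiability of $\mu\mapsto c_\mu$, quasi-optimal paths $\gamma_n$ for $\mu_n\nearrow\mu$, uniform estimates on the near-maximal set $F_n$, and a quantitative deformation along a pseudo-gradient field tangent to $\mathcal{S}_a$ --- so the approach is the right one and matches the cited source.

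One step, however, fails as written. A uniform bound on $B(\gamma_n(t))$ for $t\in F_n$ does \emph{not} combine with the hypothesis ``$A(v)\to+\infty$ or $B(v)\to+\infty$ as $\|v\|\to\infty$'' to give an $X$-norm bound, because the coercive functional may be $A$ rather than $B$ --- and in the application in this very paper it is: $A(v)=\frac{1}{2}\int_{\R^N}|\nabla v|^2\,\text{d}x$ is the coercive part, while $B\geq 0$ is the nonlinear term, so controlling $B$ alone controls nothing about $\|v\|$. You must also bound $A$ on $F_n$, which fortunately follows in one line from inequalities you already have: if $K$ denotes your difference-quotient bound on $B$, then for $t\in F_n$,
\begin{equation*}
A(\gamma_n(t))=\mathcal{I}_{\mu_n}(\gamma_n(t))+\mu_n B(\gamma_n(t))\leq c_{\mu_n}+(\mu-\mu_n)+\mu_n K.
\end{equation*}
With \emph{both} $A$ and $B$ uniformly bounded on $F_n$, either branch of the coercivity hypothesis yields the norm bound, and the remainder of your outline (the localized tangential deformation, which leaves the endpoints untouched because $c_\mu>\max\{\mathcal{I}_\mu(v_1),\mathcal{I}_\mu(v_2)\}$, together with the standard requirement that the gradient lower bound hold on a slightly enlarged set, say $\|v\|\leq K'+1$ and $|\mathcal{I}_\mu(v)-c_\mu|\leq\varepsilon$, so the unit-time flow stays where the bound applies) goes through as in \cite{CJS}.
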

\section{Minimizer problem for $2<p<2+\frac{4}{N}<4+\frac{4}{N}\leq q<2\cdot2^*$}
In this section, we will show some minimization problems with combined nonlinearity and $2<p<2+\frac{4}{N}<4+\frac{4}{N}\leq q<2\cdot2^*$.
\begin{lemma}\label{lemma3.1}
Suppose that $2<p<4+\frac{4}{N}<q<2\cdot2^*$. Then
  $m(a)=-\infty$ for all $a>0$.
   \end{lemma}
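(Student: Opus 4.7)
The plan is to exhibit, for a fixed admissible $v\in\mathcal{S}_a$, a one-parameter curve $t\mapsto v_t$ staying inside $\mathcal{S}_a$ along which $\Phi(v_t)\to-\infty$ as $t\to+\infty$. The natural candidate is the mass-preserving stretching
\begin{equation*}
v_t(x):=f^{-1}\bigl(t^{N/2}f(v(tx))\bigr),\qquad t>0,
\end{equation*}
introduced in \cite{CRSZ}; a direct change of variables using property $(1)$ of Lemma~\ref{lemma2.1} gives $\int_{\R^N}|f(v_t)|^2\,dx=\int_{\R^N}|f(v)|^2\,dx=a$, so indeed $v_t\in\mathcal{S}_a$. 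Picking any $v\in\mathcal{S}_a\cap H^1_r(\R^N)$ with $\int|f(v)|^q\,dx>0$ (which is automatic since $a>0$ forces $v\not\equiv 0$ and hence $f(v)\not\equiv 0$) allows the argument to proceed.

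Next I would invoke the identity \x{e1} with $\theta=1$, which reads
\begin{equation*}
\Phi(v_t)=\frac{t^2}{2}\int_{\R^N}\frac{1+2t^N f^2(v)}{1+2f^2(v)}|\nabla v|^2\,dx-\frac{t^{\frac{N(p-2)}{2}}}{p}\int_{\R^N}|f(v)|^p\,dx-\frac{t^{\frac{N(q-2)}{2}}}{q}\int_{\R^N}|f(v)|^q\,dx.
\end{equation*}
The kinetic part decomposes as $\tfrac12 t^2\int\frac{|\nabla v|^2}{1+2f^2(v)}dx+t^{N+2}\int\frac{f^2(v)}{1+2f^2(v)}|\nabla v|^2 dx$, so it grows at most like $C(v)\,t^{N+2}$ for $t\geq 1$, with both coefficients finite by Lemma~\ref{lemma2.1}$(2)$--$(3)$ and the fact that $v\in H^1(\R^N)$.

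Then I would compare exponents. Since $p<4+\tfrac{4}{N}$ one has $\tfrac{N(p-2)}{2}<N+2$, so the $p$-term is negligible compared with the kinetic one. Since $q>4+\tfrac{4}{N}$ one has $\tfrac{N(q-2)}{2}>N+2$, and because $\int|f(v)|^q dx>0$ this negative term dominates as $t\to+\infty$. Combining the two,
\begin{equation*}
\Phi(v_t)\leq C(v)\,t^{N+2}-\frac{\int|f(v)|^q\,dx}{q}\,t^{\frac{N(q-2)}{2}}+o(t^{N+2})\longrightarrow-\infty,
\end{equation*}
and taking the infimum over $\mathcal{S}_a$ yields $m(a)=-\infty$. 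The only mildly delicate step is the bound on the kinetic term, since the factor $\tfrac{1+2t^N f^2(v)}{1+2f^2(v)}$ is pointwise unbounded in $t$; this is handled by the trivial inequality $\tfrac{t^N f^2(v)}{1+2f^2(v)}\leq \tfrac{t^N}{2}$, which produces exactly the $t^{N+2}$ exponent that the supercritical $q$-term beats.
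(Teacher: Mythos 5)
Your proposal is correct and follows essentially the same route as the paper: both use the mass-preserving stretching $v_t=f^{-1}(t^{N/2}f(v(tx)))$ and the expansion \x{e1}, with the conclusion $m(a)\leq\Phi(v_t)\rightarrow-\infty$ driven by the exponent comparison $\frac{N(q-2)}{2}>N+2>\frac{N(p-2)}{2}$. Your explicit bound $\frac{t^Nf^2(v)}{1+2f^2(v)}\leq\frac{t^N}{2}$ on the kinetic term merely spells out a step the paper leaves implicit.
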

\begin{proof}
Fixed $v\in \mathcal{S}_a$, since $2<p<2+\frac{4}{N}<4+\frac{4}{N}<q<2\cdot2^*$,
from \x{e1}, we get
\begin{equation*}
\aligned
\Phi(v_t)&=\frac{t^2}{2}\int_{\R^N}\frac{1+2t^Nf^2(v)}{1+2f^2(v)}|\nabla v|^2\text{d}x-\frac{t^{\frac{N(p-2)}{2}}}{p}\int_{\R^N}|f(v)|^p\text{d}x-\frac{t^{\frac{N(q-2)}{2}}}{q}\int_{\R^N}|f(v)|^q\text{d}x\\
&\rightarrow-\infty,\,\, \text{as}\,\, t\rightarrow+\infty.
\endaligned
\end{equation*}
Owing to $v\in \mathcal{S}_a$, then $v_t\in\mathcal{S}_a$ for all $t>0$. Thus it follows that $-\infty\leq m(a)\leq \Phi(v_t)$ for all $t>0$. This implies that $m(a)=-\infty$, by letting $t\rightarrow+\infty$.
\end{proof}

\begin{lemma}\label{lemma3.2}
Suppose that $2<p<4+\frac{4}{N}=q<2\cdot2^*$. Then there exists a constant $a^*_N>0$ such that
  $-\infty<m(a)$ for all $0<a<\bar{a}^*_N$ and  $\Phi$  is coercive on $\mathcal{S}_a$.
   \end{lemma}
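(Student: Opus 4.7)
The plan is to use the Gagliardo--Nirenberg inequality of Lemma \ref{lemma1.2} with the substitution $u := f(v)^2$ in order to bound both nonlinear terms by quantities involving only $a = \|f(v)\|_2^2$ and $\|\nabla v\|_2^2$. The key observation is that $q = 4 + \frac{4}{N}$ is exactly the exponent at which the gradient exponent in Lemma \ref{lemma1.2} becomes equal to $1$, so the $L^q$-term produces a term linear in $\|\nabla v\|_2^2$ with a coefficient that vanishes with $a$; the $L^p$-term, being strictly subcritical in this scaling, produces a sublinear term in $\|\nabla v\|_2^2$.

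First I would check that $u = f(v)^2 \in \mathcal{E}$ for $v \in \mathcal{S}_a$: $\int_{\R^N} u\, dx = a$ by the constraint, and using $(f'(v))^2 = 1/(1+2f^2(v))$ together with Lemma \ref{lemma2.1}(10), one has
\begin{equation*}
|\nabla u|^2 = 4 f^2(v)(f'(v))^2 |\nabla v|^2 \leq 2|\nabla v|^2,
\end{equation*}
so $\nabla u \in L^2(\R^N)$. Then for any $s \in (2, 22^*)$, Lemma \ref{lemma1.2} applied with $t = s$ gives
\begin{equation*}
\int_{\R^N} |f(v)|^s\, dx = \int_{\R^N} u^{s/2}\, dx \leq C_{N,s}^s\, a^{\alpha_s}\, \Bigl(\int_{\R^N} |\nabla v|^2\, dx\Bigr)^{\beta_s},
\end{equation*}
with $\alpha_s = \frac{4N - s(N-2)}{2(N+2)}$ and $\beta_s = \frac{N(s-2)}{2(N+2)}$ (absorbing the factor $2^{\beta_s}$ into the constant).

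A direct computation with $q = 4 + \frac{4}{N}$ yields $\beta_q = 1$ and $\alpha_q = \frac{2}{N}$, so
\begin{equation*}
\tfrac{1}{q}\int_{\R^N}|f(v)|^q\, dx \leq C_1\, a^{2/N} \int_{\R^N}|\nabla v|^2\, dx;
\end{equation*}
meanwhile, for the $L^p$ term, the hypothesis $p < q = 4+\frac{4}{N}$ gives $\beta_p < 1$. Combining both estimates,
\begin{equation*}
\Phi(v) \geq \Bigl(\tfrac{1}{2} - C_1\, a^{2/N}\Bigr)\int_{\R^N}|\nabla v|^2\, dx - C_2\, a^{\alpha_p}\Bigl(\int_{\R^N}|\nabla v|^2\, dx\Bigr)^{\beta_p}.
\end{equation*}
Defining $\bar{a}^*_N := (4C_1)^{-N/2}$, so that $C_1 a^{2/N} < 1/4$ for all $a < \bar{a}^*_N$, we obtain
\begin{equation*}
\Phi(v) \geq \tfrac{1}{4}\int_{\R^N}|\nabla v|^2\, dx - C_2\, a^{\alpha_p}\Bigl(\int_{\R^N}|\nabla v|^2\, dx\Bigr)^{\beta_p}.
\end{equation*}
Since $\beta_p < 1$, the right-hand side is bounded below over $\|\nabla v\|_2^2 \in [0, \infty)$, whence $m(a) > -\infty$.

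For the coercivity claim, I would argue that if $v_n \in \mathcal{S}_a$ and $\|v_n\|_{H^1} \to \infty$, then by Lemma \ref{lemmas2.4} we have $\|\nabla v_n\|_2^2 + \|f(v_n)\|_2^2 \to \infty$; since $\|f(v_n)\|_2^2 = a$ is fixed, necessarily $\|\nabla v_n\|_2^2 \to \infty$, and the displayed lower bound, being of the form $\tfrac{1}{4}X - C X^{\beta_p}$ with $\beta_p < 1$, then forces $\Phi(v_n) \to +\infty$. The main technical point to get right is the choice $u = f(v)^2$ in Lemma \ref{lemma1.2}, together with the exact identity $\beta_q = 1$ for $q = 4 + \frac{4}{N}$, which is what singles out $\bar{a}^*_N$; beyond this, all steps are elementary applications of the preliminary lemmas.
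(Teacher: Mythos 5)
Your proposal is correct and follows essentially the same route as the paper: both apply the Gagliardo--Nirenberg inequality of Lemma \ref{lemma1.2} to $u=f^2(v)$ with the gradient bound $\|\nabla f^2(v)\|_2^2\leq 2\|\nabla v\|_2^2$ from Lemma \ref{lemma2.1}(10), exploit that the exponent $\beta_q$ equals $1$ exactly at $q=4+\frac{4}{N}$ (with $a$-power $a^{2/N}$) to absorb the $L^q$-term into the gradient term for $a$ small, and use $\beta_p<1$ to control the subcritical term. The only cosmetic differences are your choice of threshold ($C_1a^{2/N}<1/4$ rather than the paper's explicit $\bar{a}^*_N$) and your spelled-out coercivity step via Lemma \ref{lemmas2.4}, which the paper merely asserts.
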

\begin{proof}
For any $v\in H^1(\R^N)$, by Lemma \ref{lemma2.1}-(10) and (1),  one has
\begin{equation}\label{b}
\|\nabla f^2(v)\|^2_2\leq 2\|\nabla v\|^2_2\  \text{and}\  \|\nabla f(v)\|^2_2\leq \|\nabla v\|^2_2.
\end{equation}
By Lemma \ref{lemma1.2} and \x{b}, for any $v\in H^1(\R^N)$,  we get
\begin{equation}\label{b1}
\aligned
\|f(v)\|^r_r&\leq C^r_{N,r}\|\nabla f^2(v)\|^{\frac{(r-2)N}{N+2}}_2\|f(v)\|^{\frac{4N-(N-2)r}{N+2}}_2\\
&\leq C^r_{N,r} 2^{\frac{(r-2)N}{2(N+2)}}\|\nabla  v \|^{\frac{(r-2)N}{N+2}}_2\|f(v)\|^{\frac{4N-(N-2)r}{N+2}}_2, \ \ \forall\,r\in(2,2\cdot2^*).
\endaligned
\end{equation}
From Lemma \ref{lemma1.1} and \x{b}, for any $v\in H^1(\R^N)$, one has
\begin{equation}\label{bbb1}
\aligned
\|f(v)\|^s_s&\leq C^s_{N,s}\|\nabla f(v)\|^{\frac{(s-2)N}{2}}_2\|f(v)\|^{\frac{2s-N(s-2)}{2}}_2\\
&\leq C^s_{N,s} \|\nabla  v \|^{\frac{(s-2)N}{2}}_2\|f(v)\|^{\frac{2s-N(s-2)}{2}}_2, \ \ \forall\,s\in(2,2^*).
\endaligned
\end{equation}
For any $v\in \mathcal{S}_a$,  from   \x{b}-\x{bbb1}, we deduce  that
\begin{equation}\label{bb1}
\aligned
\Phi(v)&\geq\frac{1}{2}\|\nabla v\|^2_2-\frac{C^p_{N,p}}{p}\|\nabla f^2(v)\|^{\frac{(p-2)N}{N+2}}_2\|f(v)\|^{\frac{4N-(N-2)p}{N+2}}_2\\
&\quad\quad\quad\quad\quad\quad\quad-\frac{C^q_{N,q}}{q}\|\nabla f^2(v)\|^{\frac{(q-2)N}{N+2}}_2\|f(v)\|^{\frac{4N-(N-2)q}{N+2}}_2\\
&=\left(\frac{1}{2}-\frac{2C^{4+\frac{4}{N}}_{N,4+\frac{4}{N}}}{4+\frac{4}{N}}a^{\frac{2}{N}}\right)\|\nabla v\|^2_2-\frac{C^p_{N,p}2^{\frac{(p-2)N}{2(N+2)}}}{p}a^{\frac{4N-(N-2)p}{2(N+2)}}\|\nabla v\|^{\frac{(p-2)N}{N+2}}_2,
\endaligned
\end{equation}
which shows that  there is a constant
$
\bar{a}^*_N=\left(\frac{N+1}{NC^{4+\frac{4}{N}}_{N,4+\frac{4}{N}}}\right)^{\frac{N}{2}}>0
$
such that $m(a)>-\infty$ for all $0<a<\bar{a}^*_N$. This also implies that $\Phi(v)$  is coercive on $\mathcal{S}_a$, since $2<p<4+\frac{4}{N}$.
\end{proof}
Let
\begin{equation}\label{b4}
\Upsilon(v):=\frac{1}{4}\|\nabla f^2(v)\|_2^2-\frac{1}{4+\frac{4}{N}}\int_{\R^N}|f(v)|^{4+\frac{4}{N}}\text{d}x,\ \ \ \ \ \forall\,v\in \mathcal{S}_a.
\end{equation}
By \x{b4} and \x{b1}, we deduce that
\begin{equation}\label{b3}
\Upsilon(v)\geq\left[\frac{1}{4}-\frac{C^{4+\frac{4}{N}}_{N,4+\frac{4}{N}}}{4+\frac{4}{N}}a^{\frac{2}{N}}\right]\|\nabla f^2(v)\|^{2}_2
\geq0,\  \textrm{for all $0<a\leq a^*_N$.}
\end{equation}

\begin{lemma}\label{lemma3.3}
Assume that $2<p<2+\frac{4}{N}<4+\frac{4}{N}=q<2\cdot2^*$. Then
  $m(a)<0$ for all $0<a<\bar{a}^*_N$.
   \end{lemma}
\begin{proof}
For any $v\in\mathcal{S}_a$, from  \x{b4}, we get
\begin{equation}\label{b5}
\aligned
\Phi(v_t)&=\frac{t^2}{2}\int_{\R^N}\frac{1+2t^Nf^2(v)}{1+2f^2(v)}|\nabla v|^2\text{d}x-\frac{t^{\frac{N(p-2)}{2}}}{p}\|f(v)\|^p_p-\frac{t^{N+2}}{4+\frac{4}{N}}\|f(v)\|^{4+\frac{4}{N}}_{4+\frac{4}{N}}\\
&=\frac{t^2}{2}\|\nabla f(v)\|_2^2+ t^{N+2} \left(\frac{1}{4}\|\nabla f^2(v)\|_2^2-\frac{1}{4+\frac{4}{N}}\|f(v)\|^{4+\frac{4}{N}}_{4+\frac{4}{N}}\right)-\frac{t^{\frac{N(p-2)}{2}}}{p}\|f(v)\|^p_p\\
&=\frac{t^2}{2}\|\nabla f(v)\|_2^2+ t^{N+2}\Upsilon(v)-\frac{t^{\frac{N(p-2)}{2}}}{p}\|f(v)\|^p_p.
\endaligned
\end{equation}
Since $2<p<2+\frac{4}{N}$,
By \x{b3} and \x{b5}, there exists $t_0>0$ small enough such that $\Phi(v_{t_0})<0$ for all $0<a<\bar{a}^*_N$.
Hence $m(a)\leq \Phi(v_{t_0})<0$ for all $0<a<\bar{a}^*_N$.
\end{proof}

\begin{lemma}\label{lemma3.4}
Assume  that $2<p<2+\frac{4}{N}<4+\frac{4}{N}=q<2\cdot2^*$ and $0<a_1, a_2<\bar{a}^*_N$. Then

$(i)$ the following sub-additivity inequality:
\begin{equation*}
m(a_1)\leq m(a_2)+m(a_1-a_2)\ \ \text{for all}\ \ a_2\in(0,a_1)
\end{equation*}
holds and the mapping $a\mapsto m(a)$ is nonincreasing on $(0,+\infty)$;

$(ii)$  if  $m(a_2)$ or $m(a_1-a_2)$ can be attained, then
\begin{equation*}\label{b6}
m(a_1)<m(a_2)+m(a_1-a_2)\ \ \text{for all}\ \ a_2\in(0,a_1);
\end{equation*}

 $(iii)$ the mapping $a\mapsto m(a)$ is continuous  on $(0,+\infty)$.
\end{lemma}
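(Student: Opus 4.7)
The proof decomposes into three classical pieces, all based on two elementary building blocks in the variational setting on $\mathcal{S}_a$. The first is a \emph{low-energy bump}: for $\phi\in C_c^\infty(\R^N)$ set $\psi_{\lambda,\sigma,y}(x)=\lambda\phi((x-y)/\sigma)$; using $f(t)/t\to 1$ as $t\to 0$ (Lemma~\ref{lemma2.1}(4)), one has
\begin{equation*}
\int_{\R^N}|f(\psi)|^2\,\mathrm{d}x\sim\lambda^2\sigma^N\|\phi\|_2^2,\qquad \|\nabla\psi\|_2^2=\lambda^2\sigma^{N-2}\|\nabla\phi\|_2^2,\qquad \|f(\psi)\|_r^r\lesssim\lambda^r\sigma^N
\end{equation*}
for $r\in\{p,q\}$, so choosing $\lambda\to 0$ with $\lambda^2\sigma^N$ held at any prescribed positive value produces a test function of prescribed mass with $\Phi(\psi)\to 0$. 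The second is \emph{disjoint-support additivity}: if $\mathrm{supp}\,u_1\cap\mathrm{supp}\,u_2=\emptyset$, both $\Phi(\cdot)$ and $\int|f(\cdot)|^2$ decompose additively on $u_1+u_2$.

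\textbf{(i)} For the subadditivity I take near-minimizers $v_n\in\mathcal{S}_{a_2}$ and $w_n\in\mathcal{S}_{a_1-a_2}$, replace each by a compactly supported truncation via a smooth cutoff (controlling the $\Phi$-error by the $H^1$-continuity of $v\mapsto f(v)$, Lemma~\ref{lemma2.3}), restore the exact mass by appending a bump, translate $w_n$ by $y_n$ with $|y_n|\to\infty$, and add. Disjoint-support additivity places the sum in $\mathcal{S}_{a_1}$ with energy tending to $m(a_2)+m(a_1-a_2)$. Monotonicity follows: for $0<c<d$, appending to a near-minimizer of $m(c)$ a bump of mass $d-c$ yields $m(d)\leq m(c)+o(1)$, hence $m(d)\leq m(c)$.

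\textbf{(ii)} Using an attained $\bar v$ for $m(a_2)$ directly in the construction of (i) already gives $m(a_1)\leq m(a_2)+m(a_1-a_2)$. For strictness I use the nonlinear rescaling $\bar v^\alpha:=f^{-1}(\alpha f(\bar v))\in\mathcal{S}_{\alpha^2 a_2}$. A direct computation, combined with the Pohozaev identity $\mathcal{P}_1(\bar v)=0$ (Lemma~\ref{lemmas2.1}), yields
\begin{equation*}
\Phi(\bar v^\alpha)=\alpha^2\Phi(\bar v)-\frac{2(\alpha-1)}{N}\|\nabla\bar v\|_2^2+O((\alpha-1)^2)\qquad\text{as }\alpha\to 1,
\end{equation*}
so $\Phi(\bar v^\alpha)<\alpha^2 m(a_2)<m(a_2)$ for $\alpha>1$ small (using $m(a_2)<0$ from Lemma~\ref{lemma3.3}). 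Combining such $\bar v^\alpha$ with a near-minimizer of $m(a_1-\alpha^2 a_2)$ via disjoint supports, for $\alpha$ chosen with $\alpha^2 a_2<a_1$ and using the monotonicity of $m$ from (i) to compare $m(a_1-\alpha^2 a_2)$ with $m(a_1-a_2)$, yields a competitor strictly below $m(a_2)+m(a_1-a_2)$. The case in which $m(a_1-a_2)$ is attained is symmetric.

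\textbf{(iii)} Monotonicity gives $\limsup_{b\to a^+}m(b)\leq m(a)\leq\liminf_{b\to a^-}m(b)$. For the opposite one-sided limits, I apply the rescaling $v\mapsto f^{-1}(\alpha f(v))$ with $\alpha=\sqrt{a/b}$ close to $1$ to a near-minimizer of $m(b)$ (bounded in $H^1$ by the coercivity in Lemma~\ref{lemma3.2}); the explicit formula above shows $|\Phi(v^\alpha)-\Phi(v)|=O(|a-b|)$ uniformly on $H^1$-bounded sets, hence $|m(a)-m(b)|\to 0$. The main obstacle throughout is that the standard Euclidean scaling $v\mapsto\beta v$ does not preserve $\int|f(v)|^2=a$ because of the nonlinearity of $f$; every mass adjustment must go either through the bump (exploiting $f(t)\sim t$ near $0$) or through the nonlinear rescaling $f^{-1}(\alpha f(\cdot))$, and verifying the quantitative strict drop in (ii) via the Pohozaev identity is where the main analytical work lies.
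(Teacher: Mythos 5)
Parts (i) and (iii) of your proposal are essentially sound. For (i) you take a genuinely different route from the paper: the classical truncation/bump/translation construction with disjoint-support additivity, whereas the paper obtains subadditivity from the sub-homogeneity inequality $m(\alpha a)\le\alpha m(a)$ ($\alpha>1$), proved via the pure spatial dilation $\bar v_n(x)=v_n\bigl(\alpha^{-1/N}x\bigr)$, followed by the convex-split identity $m(a_1)=\frac{a_1-a_2}{a_1}m(a_1)+\frac{a_2}{a_1}m(a_1)$. Your (iii) essentially coincides with the paper's proof, which uses exactly the nonlinear rescaling $\varpi_n=f^{-1}\bigl(\sqrt{a/a_n}\,f(v_n)\bigr)$; one small correction: your Lipschitz bound $|\Phi(v^\alpha)-\Phi(v)|=O(|1-\alpha|)$ cannot be read off from "the explicit formula above", since that formula was derived using $\mathcal{P}_1(\bar v)=0$ and near-minimizers do not satisfy the Pohozaev identity — instead differentiate $\Phi(v^\alpha)=\frac{\alpha^2}{2}\int\frac{1+2\alpha^2f^2(v)}{1+2f^2(v)}|\nabla v|^2\,\mathrm{d}x-\frac{\alpha^p}{p}\|f(v)\|_p^p-\frac{\alpha^q}{q}\|f(v)\|_q^q$ in $\alpha$ directly, which is bounded on $H^1$-bounded sets. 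Your expansion $\Phi(\bar v^\alpha)=\alpha^2\Phi(\bar v)-\frac{2(\alpha-1)}{N}\|\nabla\bar v\|_2^2+O((\alpha-1)^2)$ at a minimizer is correct (I verified it against $\mathcal{P}_1(\bar v)=0$).

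However, there is a genuine gap in (ii). Since $a_1-\alpha^2a_2<a_1-a_2$ and $m$ is \emph{nonincreasing}, monotonicity gives $m(a_1-\alpha^2a_2)\ge m(a_1-a_2)$ — the wrong direction for your competitor. What you need is an upper bound $m(a_1-\alpha^2a_2)\le m(a_1-a_2)+C(\alpha-1)$ with $C$ strictly below your gain constant $2|m(a_2)|+\frac{2}{N}\|\nabla\bar v\|_2^2$, and the proposal supplies no such control: the increase of $m$ under a mass decrease of size $(\alpha^2-1)a_2$ is itself expected to be of order $\alpha-1$ (indeed $m(\alpha a)\le\alpha m(a)$ forces $m(c-\delta)\ge m(c)+\frac{\delta}{c}|m(c)|$), so the two first-order terms compete with no guaranteed winner. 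Rescaling down ($\alpha<1$) repairs the monotonicity direction but flips the sign of both terms in your expansion, giving $\Phi(\bar v^\alpha)>m(a_2)$ — no gain. The paper avoids this trap entirely: the dilation $v\mapsto v\bigl(\alpha^{-1/N}\cdot\bigr)$ leaves $f(v)$ pointwise unchanged, multiplies $\int|f(v)|^2$, $\int|f(v)|^p$, $\int|f(v)|^q$ exactly by $\alpha$ and $\|\nabla v\|_2^2$ by $\alpha^{1-2/N}<\alpha$, whence $m(\alpha a)<\alpha m(a)$ for \emph{every fixed} $\alpha>1$ when $m(a)$ is attained; feeding this into the convex-split identity yields strict subadditivity with no comparison of $m$ at perturbed masses. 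Your closing remark — that amplitude scaling $v\mapsto\beta v$ breaks the constraint — overlooked precisely this dilation, which rescales the mass exactly because no amplitude change is involved; your local expansion (valid only for ratios $\alpha$ near $1$) cannot substitute for the fixed-ratio strict inequality $m\bigl(\frac{a_1}{a_2}a_2\bigr)<\frac{a_1}{a_2}m(a_2)$ that the argument requires.
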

\begin{proof}
$(i)$  Let $a>0$ be fixed. Now, we choose $\{v_n\}\subset \mathcal{S}_a$ such that $\Phi (v_n)\rightarrow m(a)<0$. For any $\alpha>1$, we set
$
\bar{v}_n(x):=v_n\left(\alpha^{-\frac{1}{N}}x\right).
$
By a simple calculation, one has
\begin{equation}\label{b7}
\aligned
&\|f(\bar{v}_n)\|^2_2=\alpha\|f(v_n)\|^2_2=\alpha a, \ \ \|f(\bar{v}_n)\|^i_i=\alpha\|f(v_n)\|^i_i,\ \ \|\nabla \bar{v}_n\|^2_2=\alpha^{1-\frac{2}{N}}\|\nabla v_n\|^2_2.
\endaligned
\end{equation}
where $i=p\ \text{or}\  q$.
From \x{b7} and $\alpha>1$, we have
\begin{equation}\label{b8}
\aligned
m(\alpha a)&\leq\Phi(\bar{v}_n)\\
&=\alpha\left[\frac{1}{2}\alpha^{-\frac{2}{N}}\|\nabla v_n\|^2_2 -\frac{1}{p}\|f(v_n)\|^p_p-\frac{1}{q}\|f(v_n)\|^q_q\right]\\
&<\alpha\Phi (v_n)=\alpha m(a)+o_n(1).
\endaligned
\end{equation}
Thus $m(\alpha a)\leq\alpha m(a)$ for any $\alpha>1$ and any $a>0$. Thus
it follows  that
\begin{equation}\label{mb8}
\aligned
m(a_1)&=\frac{a_1-a_2}{a_1} m\left(\frac{a_1}{a_1-a_2}(a_1-a_2)\right)+\frac{a_2}{a_1} m\left(\frac{a_1}{a_2}a_2\right)\\
&\leq\frac{a_1-a_2}{a_1}m\left(\frac{a_1}{a_1-a_2}(a_1-a_2)\right)+m(a_2)\\
&\leq m\left(a_1-a_2\right)+m(a_2)
\ \ \text{for all}\ \ a_2\in(0,a_1).
\endaligned
\end{equation}
This shows that  $(i)$  holds.

$(ii)$
Since $m(a_2)$ or $m(a_1-a_2)$ can be attained at $v$, we choose $v_n\equiv v$ in \x{b8}, and thus
\begin{equation}\label{b9}
\text{$m(\alpha a_2)<\alpha m(a_2)$  or $m(\alpha(a_1-a_2))<\alpha m(a_1-a_2)$ for any $\alpha>1$.}
\end{equation}
From this fact and \x{mb8} with strict inequality, we can deduce the result.

$(iii)$ Suppose that $\{a_n\}\subset(0,\bar{a}^*_N)$ satisfies $a_n\rightarrow a\in(0,\bar{a}^*_N)$ as $n\rightarrow\infty$. By the fact that $m(a)<0$ for all $0<a<\bar{a}^*_N$, if $a_n<a$ for $n$ large enough, then it follows from $(i)$ that for any $\varepsilon>0$,
 \begin{equation}\label{b10}
m(a)\leq m(a-a_n)+m(a_n)\leq m(a_n)+\varepsilon.
 \end{equation}
If $a_n\geq a$ for $n$ large enough, then we can choose a sequence $\{v_n\}\subset \mathcal{S}_{a_n}$ such that
$
 \Phi (v_n)\leq m(a_n)+\frac{1}{n}\leq\frac{1}{n}.
$
From this and \x{bb1}, we can deduce that  $\{\|\nabla v_n\|^2_2\}$ is bounded.
From \x{b1}, \x{bbb1} and Lemma \ref{lemma1.2}, it follows that
$\{\|f(v_n)\|^p_p\}$  and $\{\|f(v_n)\|^q_q\}$ are bounded.
Set
$
 \varpi_n(x):=f^{-1}\left(\sqrt{\frac{a}{a_n}}f(v_n(x))\right).
$
Then $\varpi_n\in \mathcal{S}_a$. By \x{e} with $\theta=1$, one has
\begin{equation}\label{b11}
\aligned
m(a)&\leq\Phi(\varpi_n)=\Phi(v_n)+[\Phi(\varpi_n)-\Phi(v_n)]\\
&=\Phi(v_n)+\frac{\frac{a}{a_n}-1}{2}\int_{\R^N}\frac{\left[1+2\left(\frac{a}{a_n}+1\right)f^2(v_n)\right]}{1+2f^2(v_n)}|\nabla v_n|^2\text{d}x\\
&\quad\quad\quad\quad-\frac{\left(\sqrt{\frac{a}{a_n}}\right)^{p}-1}{p}\int_{\R^N}|f(v_n)|^p\text{d}x
-\frac{\left(\sqrt{\frac{a}{a_n}}\right)^{q}-1}{q}\int_{\R^N}|f(v_n)|^q\text{d}x\\
&\leq\Phi(v_n)+o_n(1)\\
&\leq m(a_n)+o_n(1).
\endaligned
\end{equation}
From\x{b10} and \x{b11}, for any $\varepsilon>0$,  we get
\begin{equation}\label{b12}
m(a)\leq m(a_n)+o_n(1)+\varepsilon.
\end{equation}
Moreover, for the above $\varepsilon>0$, there exists $v\in \mathcal{S}_a$ such that
\begin{equation}\label{b13}
\Phi(v)<m(a)+\varepsilon.
\end{equation}
Let
$
\omega_n(x)=f^{-1}\left(\sqrt{\frac{a_n}{a}}f(v)\right).
$
 Then $\omega_n\in \mathcal{S}_{a_n}$. It follows from $\Phi(\omega_n)\rightarrow\Phi(v)$ as $n\rightarrow\infty$ and \x{b13} that
\begin{equation}\label{b14}
m(a_n)\leq\Phi(\omega_n)=\Phi(v)+[\Phi(\omega_n)-\Phi(v)]=\Phi(v)+o_n(1)<m(a)+\varepsilon+o_n(1).
\end{equation}
By \x{b12}, \x{b14} and the arbitrariness of $\varepsilon>0$, we have that  $m(a_n)\rightarrow m(a)$ as $n\rightarrow\infty$.
The proof is completed.
\end{proof}

\begin{lemma}\label{lemma3.5}
Assume  that $2<p<2+\frac{4}{N}<4+\frac{4}{N}=q<2\cdot2^*$.
Then
$m(a)<0$ is achieved for all $0<a<\bar{a}^*_N$.
\end{lemma}
\begin{proof}
Suppose that  $\{v_n\}\subset \mathcal{S}_a$ is such that $\Phi (v_n)\rightarrow m(a)$ as $n\rightarrow\infty$. It follows from the fact that $\Phi$ is coercive on $\mathcal{S}_a$ that $\{\|\nabla v_n\|^2_2\}$ is bounded.   Thus $\{v_n\}$ is bounded in $H^1(\R^N)$.

{\bf Claim: }
\begin{equation*}
\delta:=\limsup_{n\rightarrow\infty}\sup_{y\in\R^N}\int_{B_1(y)}|v_n|^2\text{d}x>0.
\end{equation*}

If $\delta=0$, by Lions' concentration compactness principle in \cite{MW},  then we know that $v_n\rightarrow 0$ in $L^r(\R^N)$ with $r\in(2,2^*)$. So $f(v_n)\rightarrow 0\  \text{in}\ \ L^{\bar{r}}(\R^N)$ for all $\bar{r}\in(2,2\cdot2^*)$ due to Lemma 2.2-$(ii)$ in \cite{CRSZ}.  Then
$
0\leq\lim\limits_{n\rightarrow\infty}\frac{1}{2}\left(\int_{\R^N}|\nabla v_n|^2\text{d}x\right)=m(a)<0,
$
which is a contradiction. Thus $\delta>0$.

Going if necessary, there is a sequence $\{y_n\}\subset\R^N$ such that
$
\int_{B_1(y_n)}|v_n|^2\text{d}x>\frac{\delta}{2}.
$
Let $\tilde{v}_n(x):=v_n(x+y_n)$. Then
$
\int_{B_1(0)}|\tilde{v}_n|^2\text{d}x>\frac{\delta}{2}.
$
Up to a subsequence, there exists $\tilde{v}_0\in H^1(\R^N)\backslash\{0\}$ such that
\begin{equation*}
\tilde{v}_n\rightharpoonup \tilde{v}_0\ \text{in}\ H^1(\R^N),\  \tilde{v}_n\rightarrow \tilde{v}_0\  \text{in}\ \ L^r_{loc}(\R^N) \  \text{for all $r\in(2,2^*)$}, \  \tilde{v}_n\rightarrow \tilde{v}_0\ \text{a.e. on $\R^N$}.
\end{equation*}
By the weaker semi-continuous of the norm, we get $\|f(\tilde{v}_0)\|^2_2\leq\liminf\limits_{n\rightarrow\infty}\|f(\tilde{v}_n)\|^2_2=a$. Next, we claim that $\|f(\tilde{v}_0)\|^2_2=a$. In fact, by a contradiction, we assume that $b:=\|f(\tilde{v}_0)\|^2_2<a$, where $b>0$.
Let $w_n:=\tilde{v}_n-\tilde{v}_0$. By a similar method as Lemma 3.2 in \cite{CRSZ}, we also obtain that
 \begin{equation}\label{section11}
\|f(\tilde{v}_n)\|^2_2-\|f(w_n)\|^2_2=\|f(\tilde{v}_0)\|^2_2+o_n(1)
 \end{equation}
and
\begin{equation}\label{section12}
\Phi (\tilde{v}_n)-\Phi (w_n) =\Phi (\tilde{v}_0)+o_n(1).
\end{equation}

From \x{section11}, we have $c_n:=\|f(w_n)\|^2_2\rightarrow a-b$ as $n\rightarrow\infty$. By \x{section12}, one has
\begin{equation*}
m(a)=\Phi (w_n)+\Phi (\tilde{v}_0)+o_n(1)\geq m(c_n)+\Phi (\tilde{v}_0)+o_n(1).
\end{equation*}
Since $\Phi(\tilde{v}_0)\geq m(b)$, suppose that $\Phi (\tilde{v}_0)>m(b)$,
by Lemma \ref{lemma3.4}, we get
\begin{equation}\label{section14}
m(a)\geq m(a-b)+\Phi (\tilde{v}_0)>m(a-b)+m(b)\geq m(a).
\end{equation}
  This is a contradiction. Thus $\Phi(\tilde{v}_0)=m(b)$ and so $m(b)$ is achieved at $\tilde{v}_0$.  Arguing as \x{section14}, we also get a contradiction and so $\|f(\tilde{v}_0)\|^2_2=a$. This shows that $f(\tilde{v}_n-\tilde{v}_0)\rightarrow0$ in $L^2(\R^N)$. By H\"{o}lder inequality, for any $\bar{r}\in(2,2\cdot2^*)$, we get that $f(\tilde{v}_n-\tilde{v}_0)\rightarrow0$ in $L^{\bar{r}}(\R^N)$ and
$
\int_{\R^N}|f(\tilde{v}_n)|^{\bar{r}}\text{d}x\rightarrow\int_{\R^N}|f(\tilde{v}_0)|^{\bar{r}}\text{d}x
$
as $n\rightarrow\infty$, where the second limit needs to use
\begin{equation*}
\|f(\tilde{v}_n)\|^{\bar{r}}_{\bar{r}}-\|f(\tilde{v}_n-\tilde{v}_0)\|^{\bar{r}}_{\bar{r}}=\|f(\tilde{v}_0)\|^{\bar{r}}_{\bar{r}}+o_n(1).
\end{equation*}
Hence $\Phi(\tilde{v}_0)\leq\liminf\limits_{n\rightarrow\infty}\Phi(\tilde{v}_n)=m(a)$, by the weak semi-continuity of the norm. Since $\tilde{v}_0\in \mathcal{S}_a$, it follows that $\Phi (\tilde{v}_0)=m(a)$.
\end{proof}
\par\noindent
{\bf Proof of Theorem \ref{theorem1.1}.}\,\, The proof is therefore complete by Lemmas \ref{lemma3.1}-\ref{lemma3.5}. $\hfill\Box$

\section{A local minimizer and a mountain pass type solution for $2<p<2+\frac{4}{N}<4+\frac{4}{N}<q\leq2^*$}
In this part, motivated by \cite{ZP}, we shall use the monotonicity trick on a constraint set developed by \cite{CJS,Jean1,MS1,MS2} to show that problem \x{p} has a local  minimizer and a mountain pass type solution for the different range on $p,q$.
\subsection{A local minimizer}
In this part, we shall show the existence of a local minimizer for $2<p<2+\frac{4}{N}<4+\frac{4}{N}<q\leq2^*$. To this aim, inspired by \cite{Chen3,JSLa}, let the function $\mathcal{H}_a: (0,+\infty)\rightarrow\R$ be
\begin{equation}\label{e3}
\aligned
\mathcal{H}_a(t)&=\frac{1}{2}-\frac{C^p_{N,p}}{p}2^{\frac{N(p-2)}{2(N+2)}}a^{\frac{4N-p(N-2)}{2(N+2)}}t^{\frac{Np-\left(4N+4\right)}{2(N+2)}}-\frac{C^q_{N,q}}{q}2^{\frac{N(q-2)}{2(N+2)}}a^{\frac{4N-q(N-2)}{2(N+2)}}t^{\frac{Nq-\left(4N+4\right)}{2(N+2)}}\\
&=:\frac{1}{2}-A_1a^{\frac{4N-p(N-2)}{2(N+2)}}t^{\frac{Np-\left(4N+4\right)}{2(N+2)}}-A_2a^{\frac{4N-q(N-2)}{2(N+2)}}t^{\frac{Nq-\left(4N+4\right)}{2(N+2)}},
\endaligned
\end{equation}
where
$A_1=\frac{C^p_{N,p}}{p}2^{\frac{N(p-2)}{2(N+2)}}$ and $A_2=\frac{C^q_{N,q}}{q}2^{\frac{N(q-2)}{2(N+2)}}$.

\begin{lemma}\label{lemma4.1}
Assume that  $2<p<2+\frac{4}{N}<4+\frac{4}{N}<q\leq2^*$ holds. For each  $a>0$, the function $\mathcal{H}_a(t)$ has a unique global maximum at
\begin{equation*}
\bar{t}_a:=\left[\frac{A_1\left(4N+4-Np\right)}{A_2\left(Nq-4N-4\right)}\right]^{\frac{2(N+2)}{N(q-p)}}a^{\frac{N-2}{N}}>0
\end{equation*}
 and there exists $a^*_N>0$ such that  the maximum satisfies
\begin{displaymath}
\max_{t\in(0,+\infty)}\mathcal{H}_a(t)=\mathcal{H}_a(\bar{t}_a)\left\{
\begin{array}{ll}
>0, & \textrm{if $a<a^*_N$},\\
=0,& \textrm{if $a=a^*_N$},\\
<0,& \textrm{if $a>a^*_N$}.
\end{array} \right.\\
\end{displaymath}
\end{lemma}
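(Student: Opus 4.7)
The plan is to treat this as a one-variable calculus problem for $\mathcal{H}_a$. First I would fix the signs of the two exponents. Set $\beta_1 := \frac{Np - (4N+4)}{2(N+2)}$ and $\beta_2 := \frac{Nq - (4N+4)}{2(N+2)}$. The hypothesis $p < 2 + \frac{4}{N}$ gives $Np < 2N + 4 < 4N + 4$, hence $\beta_1 < 0$, while $q > 4 + \frac{4}{N}$ gives $\beta_2 > 0$. With these signs, $\mathcal{H}_a(t) \to -\infty$ both as $t \to 0^+$ (dominated by $t^{\beta_1}$) and as $t \to +\infty$ (dominated by $t^{\beta_2}$). Thus any interior critical point must be a maximum, and it is the global one.

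Next I differentiate: $\mathcal{H}_a'(t) = -A_1 a^{\alpha_1}\beta_1 t^{\beta_1 - 1} - A_2 a^{\alpha_2}\beta_2 t^{\beta_2 - 1}$, where $\alpha_1 := \frac{4N-p(N-2)}{2(N+2)}$ and $\alpha_2 := \frac{4N-q(N-2)}{2(N+2)}$. Because $-\beta_1 > 0$ and $\beta_2 > 0$, the equation $\mathcal{H}_a'(t)=0$ becomes
\begin{equation*}
t^{\beta_2 - \beta_1} \;=\; \frac{-A_1 \beta_1}{A_2 \beta_2}\, a^{\alpha_1 - \alpha_2} \;=\; \frac{A_1(4N+4-Np)}{A_2(Nq-4N-4)}\, a^{\alpha_1 - \alpha_2},
\end{equation*}
which has a unique positive root. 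Using the algebraic identities $\beta_2 - \beta_1 = \frac{N(q-p)}{2(N+2)}$ and $\alpha_1 - \alpha_2 = \frac{(q-p)(N-2)}{2(N+2)}$, so that $\frac{\alpha_1-\alpha_2}{\beta_2-\beta_1} = \frac{N-2}{N}$, I recover the claimed formula for $\bar{t}_a$. The sign analysis of $\mathcal{H}_a'$ (positive for $t < \bar{t}_a$, negative for $t > \bar{t}_a$) confirms that $\bar{t}_a$ is the unique global maximizer.

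For the last part I would substitute $\bar{t}_a$ back into $\mathcal{H}_a$. Using the critical-point identity $A_1 a^{\alpha_1} \bar{t}_a^{\beta_1} = -\frac{\beta_2}{\beta_1} A_2 a^{\alpha_2} \bar{t}_a^{\beta_2}$, one finds
\begin{equation*}
\mathcal{H}_a(\bar{t}_a) \;=\; \frac{1}{2} - \frac{\beta_1-\beta_2}{\beta_1}\, A_2\, a^{\alpha_2}\, \bar{t}_a^{\beta_2},
\end{equation*}
where the coefficient of $A_2 a^{\alpha_2} \bar{t}_a^{\beta_2}$ is strictly positive since $\beta_1<0<\beta_2$. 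The key bookkeeping is then to show that the total power of $a$ appearing on the right is $\alpha_2 + \beta_2 \cdot \frac{N-2}{N} = \frac{2}{N}$; indeed, a short calculation reduces $\frac{N[4N - q(N-2)] + (Nq - 4N - 4)(N-2)}{2N(N+2)}$ to $\frac{2}{N}$. Consequently $\mathcal{H}_a(\bar{t}_a) = \frac{1}{2} - K a^{2/N}$ for an explicit positive constant $K=K(N,p,q)$, so $a \mapsto \mathcal{H}_a(\bar{t}_a)$ is strictly decreasing from $\frac{1}{2}$ to $-\infty$, and setting $a^*_N := (2K)^{-N/2}$ yields the trichotomy in the statement.

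The most delicate step is the exponent cancellation in the third paragraph: the positivity of $\bar{t}_a$, its precise $a$-scaling, and the positivity of the coefficient $\frac{\beta_1-\beta_2}{\beta_1}$ all hinge on the combined inequalities $p < 2+\tfrac{4}{N}$ and $q > 4+\tfrac{4}{N}$, so I would carefully keep track of signs at every step rather than manipulate the rational exponents too freely.
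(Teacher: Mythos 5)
Your proof is correct and follows essentially the same route as the paper: differentiate $\mathcal{H}_a$, use the sign conditions $\beta_1<0<\beta_2$ and the $-\infty$ limits at $t\to 0^+$ and $t\to+\infty$ to identify $\bar{t}_a$ as the unique global maximizer, then substitute back to find $\mathcal{H}_a(\bar{t}_a)=\frac{1}{2}-Ka^{2/N}$ with $K=K(N,p,q)>0$. Your exponent bookkeeping (in particular $\frac{\alpha_1-\alpha_2}{\beta_2-\beta_1}=\frac{N-2}{N}$ and the reduction of the total $a$-power to $\frac{2}{N}$) checks out, and your explicit choice $a^*_N=(2K)^{-N/2}$ makes precise what the paper leaves implicit in its final sentence.
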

\begin{proof}
By \x{e3}, we deduce that
\begin{equation*}
\aligned
\frac{\text{d}\mathcal{H}_a(t)}{\text{d}t}
&=A_1{\frac{\left[\left(4N+4\right)-Np\right]}{2(N+2)}}a^{\frac{4N-p(N-2)}{2(N+2)}}t^{\frac{Np-\left(4N+4\right)}{2(N+2)}-1}\\
&\quad\quad\quad\quad\quad\quad\quad\quad-A_2{\frac{\left[Nq-\left(4N+4\right)\right]}{2(N+2)}}a^{\frac{4N-q(N-2)}{2(N+2)}}t^{\frac{Nq-\left(4N+4\right)}{2(N+2)}-1}.
\endaligned
\end{equation*}
It follows from $\frac{\text{d}\mathcal{H}_a(t)}{\text{d}t}=0$ that there exists
$
\bar{t}_a:=\left[\frac{A_1\left(4N+4-Np\right)}{A_2\left(Nq-4N-4\right)}\right]^{\frac{2(N+2)}{N(q-p)}}a^{\frac{N-2}{N}}>0
$
such that $\bar{t}_a$ is a unique global maximum of $\mathcal{H}_a(t)$ on $(0,+\infty)$, since $\mathcal{H}_a(t)\rightarrow-\infty$ as $t\rightarrow0^+$ and $\mathcal{H}_a(t)\rightarrow-\infty$ as $t\rightarrow+\infty$. Furthermore, its global maximum value is given by
 \begin{equation*}
\aligned
&\max_{t\in(0,+\infty)}\mathcal{H}_a(t)\\
&=\mathcal{H}_a(\bar{t}_a)\\
&=\frac{1}{2}-A_1a^{\frac{4N-p(N-2)}{2(N+2)}}\bar{t}_a^{\frac{Np-\left(4N+4\right)}{2(N+2)}}-A_2a^{\frac{4N-q(N-2)}{2(N+2)}}\bar{t}_a^{\frac{Nq-\left(4N+4\right)}{2(N+2)}}\\
&=\frac{1}{2}-\left\{A_1 \left[\frac{A_1\left(4N+4-Np\right)}{A_2\left(Nq-4N-4\right)}\right]^{\frac{Np-4N-4}{2(q-p)}}+A_2 \left[\frac{A_1\left(4N+4-Np\right)}{A_2\left(Nq-4N-4\right)}\right]^{\frac{Nq-4N-4}{2(q-p)}}\right\}a^{\frac{2}{N}}.
 \endaligned
 \end{equation*}
 Thus there exists $a^*_N>0$ such that  the conclusion holds.
\end{proof}

\begin{lemma}\label{lemma4.2}
Assume that  $2<p<2+\frac{4}{N}<4+\frac{4}{N}<q\leq2^*$ holds. Then for any $a>0$ and  $v\in\mathcal{S}_a$,
\begin{equation*}
\Phi_\theta(v)\geq\|\nabla v\|^2_2\mathcal{H}_a\left(\|\nabla v\|^2_2\right),\ \ \ \ \ \forall\,\theta\in[1/2,1].
\end{equation*}
\end{lemma}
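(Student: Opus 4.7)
The plan is to reduce to the case $\theta=1$ and then apply the Gagliardo--Nirenberg inequality of \Cref{lemma1.2} together with the gradient bound \x{b} on $f^2(v)$, matching exponents to recover exactly the function $\mathcal{H}_a$ from \x{e3}. Since $B(v)\ge 0$ and $\theta\le 1$, for every $v\in\mathcal{S}_a$ one has
\begin{equation*}
\Phi_\theta(v) = \frac{1}{2}\|\nabla v\|_2^2 - \frac{\theta}{p}\|f(v)\|_p^p - \frac{\theta}{q}\|f(v)\|_q^q \ge \frac{1}{2}\|\nabla v\|_2^2 - \frac{1}{p}\|f(v)\|_p^p - \frac{1}{q}\|f(v)\|_q^q,
\end{equation*}
uniformly in $\theta\in[1/2,1]$. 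So it suffices to control the two nonlinear terms by powers of $\|\nabla v\|_2^2$ with the correct coefficients.

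Next, I would apply estimate \x{b1} (which is precisely \Cref{lemma1.2} combined with $\|\nabla f^2(v)\|_2^2 \le 2\|\nabla v\|_2^2$ from \x{b}) with $r=p$ and $r=q$, both admissible since $2<p,q<22^*$. Using the constraint $\|f(v)\|_2^2=a$, this gives
\begin{equation*}
\frac{1}{p}\|f(v)\|_p^p \le A_1\, a^{\frac{4N-p(N-2)}{2(N+2)}}\,\|\nabla v\|_2^{\frac{N(p-2)}{N+2}}, \qquad \frac{1}{q}\|f(v)\|_q^q \le A_2\, a^{\frac{4N-q(N-2)}{2(N+2)}}\,\|\nabla v\|_2^{\frac{N(q-2)}{N+2}},
\end{equation*}
with $A_1,A_2$ as defined in \x{e3}. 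Inserting these into the lower bound for $\Phi_\theta$ yields
\begin{equation*}
\Phi_\theta(v) \ge \frac{1}{2}\|\nabla v\|_2^2 - A_1\, a^{\frac{4N-p(N-2)}{2(N+2)}}\|\nabla v\|_2^{\frac{N(p-2)}{N+2}} - A_2\, a^{\frac{4N-q(N-2)}{2(N+2)}}\|\nabla v\|_2^{\frac{N(q-2)}{N+2}}.
\end{equation*}

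Finally, setting $t=\|\nabla v\|_2^2$ so that $\|\nabla v\|_2^{N(r-2)/(N+2)} = t^{N(r-2)/(2(N+2))}$ for $r\in\{p,q\}$, I would factor out $t$ using the elementary identity $\frac{N(r-2)}{2(N+2)} = \frac{Nr-(4N+4)}{2(N+2)} + 1$, so that
\begin{equation*}
\Phi_\theta(v) \ge t\left[\frac{1}{2} - A_1\, a^{\frac{4N-p(N-2)}{2(N+2)}}\,t^{\frac{Np-(4N+4)}{2(N+2)}} - A_2\, a^{\frac{4N-q(N-2)}{2(N+2)}}\,t^{\frac{Nq-(4N+4)}{2(N+2)}}\right] = t\,\mathcal{H}_a(t),
\end{equation*}
which is exactly the claimed inequality. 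The argument is essentially a bookkeeping exercise, so there is no genuine obstacle; the only point requiring care is matching exponents in the last step and confirming that the use of \Cref{lemma1.2} is justified for both $p$ and $q$ (which it is, since $p,q\in(2,22^*)$ under the hypothesis $2<p<2+\tfrac{4}{N}<4+\tfrac{4}{N}<q<22^*$).
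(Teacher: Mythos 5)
Your proposal is correct and coincides with the paper's own proof: the paper likewise drops $\theta\le 1$ to reduce to the $\theta=1$ bound, applies estimate \x{b1} (Lemma \ref{lemma1.2} combined with $\|\nabla f^2(v)\|_2^2\le 2\|\nabla v\|_2^2$ from \x{b}) with $r=p$ and $r=q$ on the constraint $\|f(v)\|_2^2=a$, and factors out $\|\nabla v\|_2^2$ to recover $\mathcal{H}_a(\|\nabla v\|_2^2)$. Your exponent bookkeeping and the admissibility check $p,q\in(2,22^*)$ are both accurate.
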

\begin{proof}
By Lemma \ref{lemma1.1}, \x{b} and \x{b1},  for each $v\in \mathcal{S}_a$, we have
 \begin{equation*}
\aligned
\Phi_\theta(v)&\geq\frac{1}{2}\|\nabla v\|^2_2-\frac{1}{p}\|f(v)\|^p_p-\frac{1}{q}\|f(v)\|^{q}_{q}\\
&\geq\frac{1}{2}\|\nabla v\|^2_2-\frac{C^p_{N,p}}{p}2^{\frac{N(p-2)}{2(N+2)}}a^{\frac{4N-p(N-2)}{2(N+2)}}\|\nabla  v \|^{\frac{N(p-2)}{N+2}}_2-\frac{C^q_{N,q}}{q}2^{\frac{N(q-2)}{2(N+2)}}a^{\frac{4N-q(N-2)}{2(N+2)}}\|\nabla  v \|^{\frac{N(q-2)}{N+2}}_2\\
&\geq\|\nabla v\|^2_2\left[\frac{1}{2}-A_1a^{\frac{4N-p(N-2)}{2(N+2)}}\|\nabla  v \|^{\frac{Np-4N-4}{N+2}}_2-A_2a^{\frac{4N-q(N-2)}{2(N+2)}}\|\nabla  v \|^{\frac{Nq-4N-4}{N+2}}_2\right]\\
&=\|\nabla v\|^2_2\mathcal{H}_a\left(\|\nabla v\|^2_2\right),\ \ \ \ \forall\,\theta\in[1/2,1].
 \endaligned
 \end{equation*}
 The proof is completed.
\end{proof}

Let $t_0:=\bar{t}_{a^*_N}>0$. Define
$$
\Lambda_{k}:=\left\{ v\in H^1(\R^N): \|\nabla v\|^2_2<k\right\}
$$
and
$$
\sigma_\theta(a):=\inf\limits_{v\in \mathcal{S}_a\cap\Lambda_{t_0}}\Phi_\theta(v).
$$

\begin{lemma}\label{lemma4.3}
Assume that  $2<p<2+\frac{4}{N}<4+\frac{4}{N}<q\leq2^*$ holds. Then, for any $a\in (0,a^*_N)$, the following conclusion holds
\begin{equation*}
\sigma_\theta(a)=\inf_{v\in \mathcal{S}_a\cap\Lambda_{t_0}}\Phi_\theta(v)<0<\inf_{v\in \partial(\mathcal{S}_a\cap\Lambda_{t_0})}\Phi_\theta(v),\ \ \,\forall\,\theta\in[1/2,1].
\end{equation*}
\end{lemma}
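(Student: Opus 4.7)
The plan is to split the double inequality into two independent estimates: the positive lower bound on the boundary $\partial(\mathcal{S}_a \cap \Lambda_{t_0})$ will come directly from Lemma \ref{lemma4.2} coupled with the monotonicity of $a \mapsto \mathcal{H}_a(t)$, while the strict negativity of $\sigma_\theta(a)$ will be achieved by evaluating $\Phi_\theta$ on a suitably shrunk rescaling of a fixed admissible function.

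First I would handle the boundary estimate. Every $v \in \partial(\mathcal{S}_a \cap \Lambda_{t_0})$ satisfies $\|\nabla v\|_2^2 = t_0$, so Lemma \ref{lemma4.2} gives $\Phi_\theta(v) \geq t_0\, \mathcal{H}_a(t_0)$ uniformly in $\theta \in [1/2,1]$. The crucial observation is that in the explicit formula \x{e3} the exponents of $a$, namely $\frac{4N-p(N-2)}{2(N+2)}$ and $\frac{4N-q(N-2)}{2(N+2)}$, are both strictly positive because $p,q \in (2, 22^*)$. Consequently, for every fixed $t>0$, the map $a \mapsto \mathcal{H}_a(t)$ is strictly decreasing. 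Since $t_0 = \bar{t}_{a^*_N}$ and Lemma \ref{lemma4.1} yields $\mathcal{H}_{a^*_N}(\bar{t}_{a^*_N}) = 0$, for any $a \in (0, a^*_N)$ one obtains $\mathcal{H}_a(t_0) > \mathcal{H}_{a^*_N}(t_0) = 0$. Therefore $\Phi_\theta(v) \geq t_0 \mathcal{H}_a(t_0) > 0$ on $\partial(\mathcal{S}_a\cap \Lambda_{t_0})$.

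For the strict negativity of $\sigma_\theta(a)$, I exploit the rescaling $v_t$ from \x{e1}, which preserves the constraint so that $v_t \in \mathcal{S}_a$ for every $t>0$. Reading off \x{e1}, the Dirichlet energy satisfies
\begin{equation*}
\|\nabla v_t\|_2^2 = t^2 \int_{\R^N}\frac{1+2t^N f^2(v)}{1+2f^2(v)}|\nabla v|^2\,\mathrm{d}x \longrightarrow 0 \quad \text{as } t\to 0^+,
\end{equation*}
so $v_t \in \mathcal{S}_a \cap \Lambda_{t_0}$ for all sufficiently small $t$. Picking any nontrivial $v \in \mathcal{S}_a$ (so that $\|f(v)\|_p^p > 0$), formula \x{e1} expands as
\begin{equation*}
\Phi_\theta(v_t) = O(t^2) - \frac{\theta}{p}\, t^{N(p-2)/2}\|f(v)\|_p^p - \frac{\theta}{q}\, t^{N(q-2)/2}\|f(v)\|_q^q.
\end{equation*}
Since $p < 2 + \frac{4}{N}$ forces $N(p-2)/2 < 2$, and the coefficient $\theta/p$ is bounded below by $1/(2p)>0$ uniformly for $\theta \in [1/2,1]$, the negative $p$-term dominates the $O(t^2)$ contribution as $t\to 0^+$. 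This yields $\Phi_\theta(v_t) < 0$ for all sufficiently small $t$, hence $\sigma_\theta(a) < 0$ uniformly in $\theta$.

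The only real subtlety is checking that the positivity of the $a$-exponents in $\mathcal{H}_a(t)$ truly holds in the full range $p,q\in(2,22^*)$, which reduces to the inequalities $p(N-2)<4N$ and $q(N-2)<4N$; both follow from $q<22^*=\tfrac{8N}{N-2}$ when $N\geq 3$ and are trivial when $N=2$. Every other step is a direct application of the estimates already established in Lemmas \ref{lemma4.1}--\ref{lemma4.2} together with the elementary asymptotics of the rescaling \x{e1}, and the whole argument is uniform in $\theta\in[1/2,1]$, which is essential for the later monotonicity-trick application.
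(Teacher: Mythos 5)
Your argument is correct and is essentially the paper's own proof: you obtain $\sigma_\theta(a)<0$ by driving $\Phi_\theta$ negative along the small-$t$ rescaling $v_t$ inside $\Lambda_{t_0}$ (uniformly in $\theta$ because $\theta\geq 1/2$, exploiting $N(p-2)/2<2$), and you obtain boundary positivity from Lemma \ref{lemma4.2} together with the strict decrease of $a\mapsto\mathcal{H}_a(t_0)$ and $\mathcal{H}_{a^*_N}(t_0)=\mathcal{H}_{a^*_N}(\bar{t}_{a^*_N})=0$, which is exactly the content of the paper's terse appeal to Lemmas \ref{lemma4.1}--\ref{lemma4.2}, here with the details usefully made explicit. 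One slip to correct: $22^*=\frac{4N}{N-2}$, not $\frac{8N}{N-2}$; this is harmless, since the positivity of the $a$-exponents reduces to $q(N-2)<4N$, which is precisely equivalent to $q<22^*$.
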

\begin{proof}
Let $v\in \mathcal{S}_a$ be fixed. Then
$
v_s(x):=f^{-1}(s^{N/2}f(v(sx)))\in  \mathcal{S}_a,\,\forall\,s>0,
$
and
\begin{equation*}
\Phi_\theta(v_s)=\frac{s^2}{2}\int_{\R^N}\frac{1+2s^Nf^2(v)}{1+2f^2(v)}|\nabla v|^2\text{d}x-\frac{\theta s^{\frac{N(p-2)}{2}}}{p}\int_{\R^N}|f(v)|^p\text{d}x-\frac{\theta s^{\frac{N(q-2)}{2}}}{q}\int_{\R^N}|f(v)|^q\text{d}x.
\end{equation*}
Thus there exists $0<s_0<1$ small enough such that
\begin{equation*}
\|\nabla v_{s_0}\|^2_2=s^2_0 \int_{\R^N}\frac{1+2s^N_0f^2(v)}{1+2f^2(v)}|\nabla v|^2\text{d}x\leq s^2_0 \|\nabla v\|^2_2<t_0.
\end{equation*}
and $\Phi_\theta(v_{s_0})<0$.
Hence it follows that  $v_{s_0}\in \mathcal{S}_a\cap\Lambda_{t_0}$ and $\Phi_\theta(v_{s_0})<0$ for $s_0>0$ small enough and  any $\theta\in[1/2,1]$. So $\sigma_\theta(a)\leq\Phi_\theta(v_{s_0})<0,\,\forall\,\theta\in[1/2,1]$. By this fact and Lemmas \ref{lemma4.1}-\ref{lemma4.2}, we can get the conclusion.
\end{proof}

\begin{remark}\label{remark4.1}
By Lemma \ref{lemma4.3}, we deduce that $\sigma_1(a)\leq\sigma_\theta(a)\leq\sigma_{1/2}(a)$ for all  $\theta\in[1/2,1]$ and $a\in(0,a^*_N)$.
\end{remark}

\begin{lemma}\label{lemma4.4}
Assume that  $2<p<2+\frac{4}{N}<4+\frac{4}{N}<q\leq2^*$  and $a\in (0,a^*_N)$. Then, for all $b\in(0,a)$ and $\theta\in[1/2,1]$, $\sigma_\theta(a)\leq \sigma_\theta(b)+\sigma_\theta(a-b)$; and if $\sigma_\theta(b)$ or $\sigma_\theta(a-b)$ can be achieved, then the inequality is strict.
\end{lemma}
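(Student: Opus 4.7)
The plan is to adapt the proof of Lemma 3.4(i)-(ii) to the constrained infimum $\sigma_\theta$, the main new ingredient being that rescaled functions must remain inside $\Lambda_{t_0}$. As in Lemma 3.4, subadditivity follows once one establishes the scaling inequality
\[
\sigma_\theta(\alpha a') \leq \alpha\, \sigma_\theta(a') \quad \text{for every } \alpha > 1 \text{ with } \alpha a' < a^*_N.
\]
Indeed, applying this with $(\alpha, a') = (\tfrac{a}{a-b}, a-b)$ and with $(\alpha, a') = (\tfrac{a}{b}, b)$ (both producing output mass $a < a^*_N$) gives $\sigma_\theta(a) \leq \tfrac{a}{a-b}\sigma_\theta(a-b)$ and $\sigma_\theta(a) \leq \tfrac{a}{b}\sigma_\theta(b)$; taking the convex combination $\sigma_\theta(a) = \tfrac{a-b}{a}\sigma_\theta(a) + \tfrac{b}{a}\sigma_\theta(a)$ then yields $\sigma_\theta(a) \leq \sigma_\theta(a-b) + \sigma_\theta(b)$.

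To prove the scaling inequality I would take a minimizing sequence $\{v_n\} \subset \mathcal{S}_{a'} \cap \Lambda_{t_0}$ with $\Phi_\theta(v_n) \to \sigma_\theta(a') < 0$ and set $\bar v_n(x) := v_n(\alpha^{-1/N} x)$. A direct change of variables, exactly as in \eqref{b7}-\eqref{b8}, gives $\bar v_n \in \mathcal{S}_{\alpha a'}$ together with $\|\nabla \bar v_n\|_2^2 = \alpha^{(N-2)/N}\|\nabla v_n\|_2^2$ and $\|f(\bar v_n)\|_r^r = \alpha\|f(v_n)\|_r^r$ for $r \in \{p,q\}$, hence
\[
\Phi_\theta(\bar v_n) = \alpha\Bigl[\tfrac12\alpha^{-2/N}\|\nabla v_n\|_2^2 - \tfrac{\theta}{p}\|f(v_n)\|_p^p - \tfrac{\theta}{q}\|f(v_n)\|_q^q\Bigr] < \alpha\,\Phi_\theta(v_n)
\]
since $\alpha^{-2/N} < 1$. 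The substantive task, absent in Lemma 3.4, is to verify $\bar v_n \in \Lambda_{t_0}$, which is not automatic when $N \geq 3$. From $\Phi_\theta(v_n) < 0$ and Lemma \ref{lemma4.2}, one obtains $\mathcal{H}_{a'}(\|\nabla v_n\|_2^2) < 0$; combined with $\mathcal{H}_{a'}(t_0) > 0$ for $a' < a^*_N$ (from Lemma \ref{lemma4.1}), this pins $\|\nabla v_n\|_2^2 < t_{a'}^-$, where $t_{a'}^-$ denotes the smaller zero of $\mathcal{H}_{a'}$. A direct algebraic check, exploiting the identity $\tfrac{4N-p(N-2)}{2(N+2)} + \tfrac{N-2}{N}\cdot\tfrac{Np-4N-4}{2(N+2)} = \tfrac{2}{N}$ (and the corresponding one with $q$ in place of $p$), then yields the key identity
\[
\mathcal{H}_{\alpha a'}\bigl(\alpha^{(N-2)/N} t\bigr) = \tfrac12\bigl(1 - \alpha^{2/N}\bigr) + \alpha^{2/N}\,\mathcal{H}_{a'}(t).
\]
Evaluating at $t = t_{a'}^-$ gives $\mathcal{H}_{\alpha a'}(\alpha^{(N-2)/N} t_{a'}^-) = \tfrac12(1-\alpha^{2/N}) < 0$, and a continuity argument on the interval $[1, a^*_N/a')$, using that $\alpha^{(N-2)/N} t_{a'}^-$ coincides with $t_{\alpha a'}^-$ at $\alpha = 1$ and can never meet $t_{\alpha a'}^-$ or $t_{\alpha a'}^+$ (where $\mathcal{H}_{\alpha a'}$ vanishes), forces $\alpha^{(N-2)/N} t_{a'}^- < t_{\alpha a'}^- < t_0$. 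Hence $\bar v_n \in \mathcal{S}_{\alpha a'} \cap \Lambda_{t_0}$ and the scaling inequality follows.

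The strict inequality when $\sigma_\theta(b)$ or $\sigma_\theta(a-b)$ is attained comes from running the same argument with the constant sequence $v_n \equiv v^*$ at the minimizer: since $\|\nabla v^*\|_2 > 0$, the step $\Phi_\theta(\bar v^*) < \alpha\,\Phi_\theta(v^*) = \alpha\,\sigma_\theta(b)$ is already strict for any single $\alpha > 1$, yielding $\sigma_\theta(\alpha b) < \alpha\,\sigma_\theta(b)$, which propagates to strict subadditivity. The principal obstacle, absent in Lemma \ref{lemma3.4}, is precisely the confinement $\bar v_n \in \Lambda_{t_0}$ when $N \geq 3$, where the scaling genuinely increases the gradient norm; the algebraic identity displayed above is the new ingredient that makes this confinement work uniformly in $\alpha$ up to the critical mass $a^*_N$.
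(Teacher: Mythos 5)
Your proof is correct, and at the top level it follows the same architecture as the paper: establish the scaling inequality $\sigma_\theta(\alpha a')\leq\alpha\,\sigma_\theta(a')$ and then conclude by the convex-combination step, exactly as in Lemma~\ref{lemma3.4}, with strictness obtained by running the scaling step on the constant sequence at an attained minimizer (this last part coincides with the paper's). Where you genuinely diverge is in the one new ingredient, the confinement of the rescaled sequence in $\Lambda_{t_0}$. The paper handles it by proving directly, in its estimate \eqref{e6}, that $\mathcal{H}_b\bigl(\tfrac{b\iota}{a}t_0\bigr)\geq\mathcal{H}_a(t_0)>0$ for all $\iota\in[1,a/b]$, i.e.\ $\mathcal{H}_b>0$ on the whole interval $\bigl[\tfrac{b}{a}t_0,\,t_0\bigr]$; since the minimizing sequence satisfies $\|\nabla v_n\|_2^2\,\mathcal{H}_b(\|\nabla v_n\|_2^2)\leq\Phi_\theta(v_n)<0$, this pins $\|\nabla v_n\|_2^2\leq\tfrac{b}{a}t_0$, after which confinement of $\ddot v_n(x)=v_n(\tau^{-1/N}x)$ is the one-line bound $\tau^{(N-2)/N}\tfrac{b}{a}t_0\leq\tau\tfrac{b}{a}t_0\leq t_0$. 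You instead pin $\|\nabla v_n\|_2^2<t^-_{a'}$, the smaller zero of $\mathcal{H}_{a'}$, and transport that zero under scaling via the exact identity $\mathcal{H}_{\alpha a'}(\alpha^{(N-2)/N}t)=\tfrac12(1-\alpha^{2/N})+\alpha^{2/N}\mathcal{H}_{a'}(t)$, which I have checked and which is indeed equivalent to the exponent bookkeeping the paper uses in \eqref{e6} (both reduce to the homogeneity relation $\tfrac{4N-s(N-2)}{2(N+2)}+\tfrac{N-2}{N}\cdot\tfrac{Ns-4N-4}{2(N+2)}=\tfrac{2}{N}$ for $s=p,q$). Your route buys a sharper and more structural statement — the lower branch of zeros satisfies $\alpha^{(N-2)/N}t^-_{a'}<t^-_{\alpha a'}$ uniformly up to the critical mass — while the paper's route is shorter because it never needs to locate the zeros of $\mathcal{H}_b$ at all. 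One step you leave informal deserves a sentence in a final write-up: the no-crossing argument ruling out $\alpha^{(N-2)/N}t^-_{a'}>t^+_{\alpha a'}$ needs continuity of $\alpha\mapsto t^{\pm}_{\alpha a'}$ (or at least an intermediate-value argument with a known initial position near $\alpha=1$). This is a cosmetic rather than a real gap: the proof of Lemma~\ref{lemma4.1} shows $\mathcal{H}_a$ has a unique critical point, hence is strictly increasing before $\bar t_a$ and strictly decreasing after, so its two zeros are simple and vary continuously in the mass; moreover $a\mapsto\mathcal{H}_a(t)$ is strictly decreasing pointwise, so $t^+_{\alpha a'}\leq t^+_{a'}$ while $\alpha^{(N-2)/N}t^-_{a'}$ starts at $t^-_{a'}<t^+_{a'}$, giving the initial position, and any crossing would force $\mathcal{H}_{\alpha a'}$ to vanish at a point where your identity makes it equal to $\tfrac12(1-\alpha^{2/N})<0$.
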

\begin{proof}
Let $b\in(0,a)$ be fixed. For any $\iota\in \left[1,\frac{a}{b}\right]$, it follows from  \x{e3} and Lemma \ref{lemma4.1} that
\begin{equation}\label{e6}
\aligned
\mathcal{H}_b\left(\frac{b\iota}{a}t_0\right)&=\frac{1}{2}-A_1b^{\frac{4N-p(N-2)}{2(N+2)}}\left(\frac{b\iota}{a}t_0\right)^{\frac{Np-\left(4N+4\right)}{2(N+2)}}
-A_2b^{\frac{4N-q(N-2)}{2(N+2)}}\left(\frac{b\iota}{a}t_0\right)^{\frac{Nq-\left(4N+4\right)}{2(N+2)}}
\\
&=\frac{1}{2}-A_1a^\frac{4N-p(N-2)}{2(N+2)}\left(t_0\right)^{\frac{Np-\left(4N+4\right)}{2(N+2)}}\left(\frac{b\iota}{a}\right)^{\frac{Np-\left(4N+4\right)}{2(N+2)}+\frac{4N-p(N-2)}{2(N+2)}}\iota^{-\frac{4N-p(N-2)}{2(N+2)}}\\
&\quad\quad-A_2a^\frac{4N-p(N-2)}{2(N+2)}\left(t_0\right)^{\frac{Nq-\left(4N+4\right)}{2(N+2)}}\left(\frac{b\iota}{a}\right)^{\frac{Nq-\left(4N+4\right)}{2(N+2)}+\frac{4N-q(N-2)}{2(N+2)}}\iota^{-\frac{4N-q(N-2)}{2(N+2)}}\\
&=\frac{1}{2}-A_1a^\frac{4N-p(N-2)}{2(N+2)}\left(t_0\right)^{\frac{Np-\left(4N+4\right)}{2(N+2)}}\left(\frac{b\iota}{a}\right)^{\frac{p-2}{N+2}}\iota^{-\frac{4N-p(N-2)}{2(N+2)}}\\
&\quad\quad-A_2a^\frac{4N-p(N-2)}{2(N+2)}\left(t_0\right)^{\frac{Nq-\left(4N+4\right)}{2(N+2)}}\left(\frac{b\iota}{a}\right)^{\frac{q-2}{N+2}}\iota^{-\frac{4N-q(N-2)}{2(N+2)}}\\
&\geq\mathcal{H}_a(t_0)=\mathcal{H}_a(\bar{t}_{a^*_0})>\mathcal{H}_{a^*_0}(\bar{t}_{a^*_0})=0.
\endaligned
\end{equation}
This shows that $\mathcal{H}_b(t)>0$ for any $t\in\left[\frac{b}{a}t_0,t_0\right]$.
Since $\sigma_\theta(b)<0$, it follows that there exists a sequence $\{v_n\}\subset\mathcal{S}_b\cap\Lambda_{t_0}$  such that
$
\|\nabla v_n\|^2_2\mathcal{H}_b\left(\|\nabla v_n\|^2_2\right)\leq\Phi_\theta(v_n)<0 \  \text{for $n$ large enough},
$
which, together with \x{e6}, implies that for $n$ large enough,
$$
\|\nabla v_n\|^2_2<\frac{b}{a}t_0.
$$
For $\tau\in \left(1,\frac{a}{b}\right]$, let
$
\ddot{v}_n(x):=v_n\left(\tau^{-\frac{1}{N}}x\right).
$
By a simple calculation, we get that
\begin{equation*}
\|\nabla\ddot{v}_n\|^2_2=\tau^{\frac{N-2}{N}}\|\nabla v_n\|^2_2<\tau^{\frac{N-2}{N}}\frac{b}{a}t_0<t_0.
\end{equation*}
Similar to \x{b8}, we can infer that
$
\sigma_\theta(\theta b)\leq\tau \sigma_\theta(b),\,\forall\, \tau\in \left(1,\frac{a}{b}\right].
$
Thus
\begin{equation*}
\aligned
\sigma_\theta(a)&\leq \frac{a-b}{a}\sigma_\theta\left(\frac{a}{a-b}(a-b)\right)+\frac{b}{a}\sigma_\theta\left(\frac{a}{b}b\right)\\
&\leq \sigma_\theta\left(a-b\right)+\sigma_\theta\left(b\right)
\endaligned
\end{equation*}
and the inequality is strict if $\sigma_\theta\left(a-b\right)$ or $\sigma_\theta\left(b\right)$ can be achieved.
\end{proof}

\begin{lemma}\label{lemma4.5}
Assume that  $2<p<2+\frac{4}{N}<4+\frac{4}{N}<q\leq2^*$ holds. Then the function $a\mapsto \sigma_\theta(a)$ is continuous on $(0,a^*_N)$, for any $\theta\in[1/2,1]$.
\end{lemma}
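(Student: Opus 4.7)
The plan is to adapt the three-part argument from Lemma \ref{lemma3.4}$(iii)$, with the extra subtlety that we must keep all scaled test functions inside the open well $\Lambda_{t_0}$. Fix $a\in(0,a^*_N)$, $\theta\in[1/2,1]$ and a sequence $a_n\to a$ in $(0,a^*_N)$; it suffices to show $\limsup_n\sigma_\theta(a_n)\le\sigma_\theta(a)$ and $\liminf_n\sigma_\theta(a_n)\ge\sigma_\theta(a)$.

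For the upper bound, given $\varepsilon>0$, I would pick $v\in\mathcal{S}_a\cap\Lambda_{t_0}$ with $\|\nabla v\|_2^2<t_0$ strictly and $\Phi_\theta(v)<\sigma_\theta(a)+\varepsilon$, and set $\omega_n:=f^{-1}(\sqrt{a_n/a}\,f(v))\in\mathcal{S}_{a_n}$. Differentiating the identity $f(\omega_n)=\sqrt{a_n/a}\,f(v)$ gives $\nabla\omega_n=\sqrt{a_n/a}\,(f'(v)/f'(\omega_n))\nabla v$, from which $\omega_n\to v$ in $H^1(\R^N)$ and $\Phi_\theta(\omega_n)\to\Phi_\theta(v)$ follow via Lemmas \ref{lemma2.1}, \ref{lemma2.3} and dominated convergence (the uniform pointwise bound comes from $|f'(v)/f'(\omega_n)|\le\sqrt{1+2f^2(\omega_n)}$ together with Lemma \ref{lemma2.1}(10)). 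In particular $\omega_n\in\Lambda_{t_0}$ for $n$ large, so $\sigma_\theta(a_n)\le\Phi_\theta(\omega_n)<\sigma_\theta(a)+2\varepsilon$.

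For the lower bound I would split into cases. If $a_n<a$, for $n$ large enough $a-a_n\in(0,a^*_N)$, so Lemma \ref{lemma4.3} gives $\sigma_\theta(a-a_n)<0$; the sub-additivity of Lemma \ref{lemma4.4} then yields $\sigma_\theta(a)\le\sigma_\theta(a_n)+\sigma_\theta(a-a_n)\le\sigma_\theta(a_n)$. If $a_n>a$, take a minimizing sequence $\{v_n\}\subset\mathcal{S}_{a_n}\cap\Lambda_{t_0}$ with $\Phi_\theta(v_n)\le\sigma_\theta(a_n)+1/n$ and set $\varpi_n:=f^{-1}(\sqrt{a/a_n}\,f(v_n))\in\mathcal{S}_a$. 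The key computation
\[
|\nabla\varpi_n|^2=\frac{a}{a_n}\,\frac{1+2(a/a_n)f^2(v_n)}{1+2f^2(v_n)}\,|\nabla v_n|^2\le|\nabla v_n|^2
\]
(valid because $a/a_n\le 1$) guarantees $\|\nabla\varpi_n\|_2^2\le\|\nabla v_n\|_2^2<t_0$, hence $\varpi_n\in\mathcal{S}_a\cap\Lambda_{t_0}$. Writing $\lambda_n:=a/a_n\to 1$, the uniform estimate
\[
\left|\lambda_n\,\frac{1+2\lambda_n f^2(v_n)}{1+2f^2(v_n)}-1\right|\le|\lambda_n-1|+|\lambda_n^2-1|,
\]
combined with $\|\nabla v_n\|_2^2\le t_0$ and with $\|f(\varpi_n)\|_r^r=(a/a_n)^{r/2}\|f(v_n)\|_r^r$ for $r\in\{p,q\}$, delivers $\Phi_\theta(\varpi_n)-\Phi_\theta(v_n)=o_n(1)$. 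Therefore $\sigma_\theta(a)\le\Phi_\theta(\varpi_n)\le\sigma_\theta(a_n)+o_n(1)$.

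The main obstacle, compared with Lemma \ref{lemma3.4}$(iii)$, is that the infimum is taken over the restricted set $\mathcal{S}_a\cap\Lambda_{t_0}$, so the scaled test functions $\omega_n,\varpi_n$ must be verified to lie in $\Lambda_{t_0}$; this is precisely where the monotonicity of $f'$ (via Lemma \ref{lemma2.1}) and the inequality $f'(v)/f'(w)\le 1$ whenever $|f(w)|\le|f(v)|$ play their role, letting the $a_n>a$ construction automatically decrease the kinetic energy while the $a_n<a$ case is handled by sub-additivity without any new construction.
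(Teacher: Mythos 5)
Your proof is correct and takes essentially the same route as the paper, whose own proof is simply the remark that the argument is ``similar to Lemma \ref{lemma3.4}-$(iii)$'': you use the identical scalings $\omega_n=f^{-1}\left(\sqrt{a_n/a}\,f(v)\right)$ and $\varpi_n=f^{-1}\left(\sqrt{a/a_n}\,f(v_n)\right)$ for the two directions, together with the sub-additivity of Lemma \ref{lemma4.4} and the negativity of $\sigma_\theta$ from Lemma \ref{lemma4.3} in the case $a_n<a$. Your explicit verifications that the scaled functions remain in $\Lambda_{t_0}$ --- the pointwise bound $\lambda\,\frac{1+2\lambda f^2}{1+2f^2}\leq 1$ for $\lambda\leq 1$ in the case $a_n>a$, and $\|\nabla \omega_n\|^2_2\rightarrow\|\nabla v\|^2_2<t_0$ for the upper bound --- are precisely the details the paper leaves implicit, and they are carried out correctly.
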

\begin{proof}
The proof is similar to the proof of Lemma \ref{lemma3.4}-$(iii)$. So we omit the proof.
\end{proof}

\begin{lemma}\label{lemma4.6}
Assume that  $2<p<2+\frac{4}{N}<4+\frac{4}{N}<q\leq2^*$ holds. Then for any $\theta\in[1/2,1]$, the function $\sigma_\theta(a)$ is achieved on $(0,a^*_N)$ and $\Phi_\theta|'_{\mathcal{S}_a}=0$.
\end{lemma}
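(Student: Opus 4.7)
The plan is to imitate the minimization argument from Lemma \ref{lemma3.5}, adapted to the localized sublevel set $\mathcal{S}_a\cap\Lambda_{t_0}$. First I would pick a minimizing sequence $\{v_n\}\subset\mathcal{S}_a\cap\Lambda_{t_0}$ for $\sigma_\theta(a)$. By the very definition of $\Lambda_{t_0}$ we have $\|\nabla v_n\|_2^2<t_0$, and Lemma \ref{lemmas2.4} combined with $\|f(v_n)\|_2^2=a$ upgrades this to boundedness of $\{v_n\}$ in $H^1(\R^N)$. Since $\sigma_\theta(a)<0$ by Lemma \ref{lemma4.3}, Lions' concentration compactness principle rules out vanishing: otherwise $f(v_n)\to 0$ in $L^r(\R^N)$ for every $r\in(2,22^*)$ (by Lemma \ref{lemma2.3}(ii)) and $\Phi_\theta(v_n)\to\frac12\liminf\|\nabla v_n\|_2^2\ge 0$, contradicting $\Phi_\theta(v_n)\to\sigma_\theta(a)<0$. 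Hence after a translation $\tilde v_n(\cdot)=v_n(\cdot+y_n)$ one extracts a weak limit $\tilde v_n\rightharpoonup\tilde v_0\ne 0$ in $H^1(\R^N)$, with pointwise a.e. convergence and strong local convergence in $L^r$ for $r\in(2,2^*)$.

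Next I would rule out dichotomy, i.e.\ show $b:=\|f(\tilde v_0)\|_2^2=a$. Setting $w_n:=\tilde v_n-\tilde v_0$, the Brezis--Lieb style decompositions
\begin{equation*}
\|f(\tilde v_n)\|_2^2-\|f(w_n)\|_2^2=\|f(\tilde v_0)\|_2^2+o_n(1),\qquad \Phi_\theta(\tilde v_n)-\Phi_\theta(w_n)=\Phi_\theta(\tilde v_0)+o_n(1),
\end{equation*}
already used in the proof of Lemma \ref{lemma3.5}, are available. Assuming $b<a$, the first identity gives $\|f(w_n)\|_2^2\to a-b$, and one also has $\|\nabla w_n\|_2^2<t_0$ eventually, so that $w_n$ (after a tiny rescaling to restore the mass exactly) is admissible for the $\sigma_\theta(a-b)$ problem. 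Combined with $\Phi_\theta(\tilde v_0)\ge\sigma_\theta(b)$ this would produce
\begin{equation*}
\sigma_\theta(a)=\Phi_\theta(\tilde v_0)+\lim_{n\to\infty}\Phi_\theta(w_n)\ge \sigma_\theta(b)+\sigma_\theta(a-b),
\end{equation*}
whereas the strict sub-additivity of Lemma \ref{lemma4.4} (triggered precisely because $\sigma_\theta(b)$, say, would then be achieved by $\tilde v_0$) gives $\sigma_\theta(a)<\sigma_\theta(b)+\sigma_\theta(a-b)$, a contradiction. Therefore $\|f(\tilde v_0)\|_2^2=a$, which combined with $\|f(\tilde v_n)\|_2^2=a$ forces $f(\tilde v_n)\to f(\tilde v_0)$ in $L^2(\R^N)$ and, by H\"older interpolation with the boundedness in $L^{22^*}$, also in $L^r(\R^N)$ for every $r\in(2,22^*)$.

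Finally, weak lower semicontinuity of $\|\nabla\cdot\|_2^2$ together with the $L^r$-convergence of $f(\tilde v_n)$ yields $\Phi_\theta(\tilde v_0)\le\liminf_n\Phi_\theta(\tilde v_n)=\sigma_\theta(a)$. It remains to check that $\tilde v_0$ lies in the \emph{open} set $\Lambda_{t_0}$, not merely in its closure; this is where Lemma \ref{lemma4.3} is used decisively: on $\partial(\mathcal{S}_a\cap\Lambda_{t_0})$ one has $\Phi_\theta>0$, while $\Phi_\theta(\tilde v_0)\le\sigma_\theta(a)<0$, so $\|\nabla\tilde v_0\|_2^2<t_0$ strictly. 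Hence $\tilde v_0\in\mathcal{S}_a\cap\Lambda_{t_0}$ attains $\sigma_\theta(a)$, and being an interior local minimum of $\Phi_\theta$ constrained to $\mathcal{S}_a$, the Lagrange multiplier rule gives $\Phi_\theta|'_{\mathcal{S}_a}(\tilde v_0)=0$. The main technical obstacle will be the no-dichotomy step, namely ensuring that the tail $w_n$ can be handled within the localized framework so that Lemma \ref{lemma4.4} can actually be invoked; once this is arranged, the rest is a routine concentration-compactness conclusion.
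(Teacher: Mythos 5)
Your plan is correct and follows essentially the same route as the paper: a minimizing sequence in $\mathcal{S}_a\cap\Lambda_{t_0}$, exclusion of vanishing via Lions and Lemma \ref{lemma2.3}, the Brezis--Lieb type splittings to rule out dichotomy through the (strict) sub-additivity of Lemma \ref{lemma4.4}, and the boundary positivity from Lemma \ref{lemma4.3} to keep the minimizer strictly inside $\Lambda_{t_0}$ before applying the Lagrange multiplier rule. The only cosmetic deviations are that the paper uses the continuity of $a\mapsto\sigma_\theta(a)$ (Lemma \ref{lemma4.5}) on the masses $\beta_n=\|f(\tilde w_n)\|_2^2$ instead of your rescaling of the tail, and it further upgrades the conclusion to strong $H^1(\R^N)$ convergence of the translated sequence via Lemma \ref{lemmas2.4}, whereas you stop at weak lower semicontinuity, which already suffices for the statement.
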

\begin{proof}
 Let $\{v_n\}\subset\mathcal{S}_a\cap \Lambda_{t_0} $ be a minimizing sequence of $\sigma_\theta(a)$.  By Lemma \ref{lemma4.3}, we have
\begin{equation}\label{e-6}
\|\nabla v_n\|^2_2<t_0,\ \ \ \|f(v_n)\|^2_2=a,\ \ \Phi_\theta(v_n)=\sigma_\theta(a)+o_n(1)<0\ \ \text{for large $n$}.
\end{equation}
By Lemma \ref{lemma2.1} and \x{b1}, one has
\begin{equation*}
\aligned
\int_{\R^N}|v_n|^2\text{d}x&\leq C\int_{|v_n|\leq1}|f(v_n)|^2\text{d}x+\int_{|v_n|\geq1}|v_n|^4\text{d}x\\
&\leq C\|f(v_n)\|^2_2+C\|\nabla v_n\|_2^{\frac{2N}{N+2}}\|f(v_n)\|^{\frac{8}{N+2}}_2\leq C.
\endaligned
\end{equation*}

Let
$$
\bar{\delta}:=\limsup\limits_{n\rightarrow\infty}\sup\limits_{y\in\R^N}\int_{B_1(y)}|v_n|^2\text{d}x.
$$
If $\bar{\delta}=0$, then by Lions' concentration compactness principle, we have that $v_n\rightarrow0$ in $L^r(\R^N)$ for all $r\in(2,6)$. Thus for any $2<p<2+\frac{4}{N}<4+\frac{4}{N}<q\leq2^*$, it follows from Lemma \ref{lemma2.3} that
\begin{equation}\label{e-7}
\int_{\R^N}|f(v_n)|^p\text{d}x\rightarrow0\ \ \text{and}\ \ \int_{\R^N}|f(v_n)|^q\text{d}x\rightarrow0\ \ \text{as}\ \ n\rightarrow\infty.
\end{equation}
From \x{e-6}, \x{e-7} and Sobolev inequality, we get
 \begin{equation*}
 \aligned
\sigma_\theta(a)+o_n(1)&=\frac{1}{2}\|\nabla v_n\|^2_2-\frac{1}{p}\|f(v_n)\|^p_p-\frac{1}{q}\|f(v_n)\|^{q}_{q}\\
&=\frac{1}{2}\|\nabla v_n\|^2_2+o_n(1)\\
&\geq o_n(1),
\endaligned
 \end{equation*}
which is a contradiction, since $\sigma_\theta(a)<0$. Thus $\bar{\delta}>0$.

Up to a subsequence, there exists a sequence $\{y_n\}\subset\R^N$ such that
$$
\int_{B_1(y_n)}|v_n|^2\text{d}x>\frac{\delta}{2}.
$$
Let $\hat{\upsilon}_n(x):=v_n(x+y_n)$. Then there exists $\hat{\upsilon}_\theta\in H^1(\R^N)\setminus\{0\}$ such that, passing to subsequence,
\begin{equation*}
\hat{\upsilon}_n\rightharpoonup\hat{\upsilon}_\theta\ \text{in $H^1(\R^N)$,}\ \ \hat{\upsilon}_n\rightarrow\hat{\upsilon}_\theta\ \text{in $L^r_{loc}(\R^N)$ for all $r\in (2,2^*)$},
\ \hat{\upsilon}_n\rightarrow\hat{\upsilon}_\theta\ \text{a.e. on $\R^N$.}
\end{equation*}
From \x{e-6}, we deduce that
\begin{equation}\label{9-10}
\|\nabla \hat{\upsilon}_n\|^2_2<t_0,\ \ \ 0<\|f(\hat{\upsilon}_\theta)\|^2_2\leq\liminf_{n\rightarrow\infty}\|f(\hat{\upsilon}_n)\|^2_2=a,\ \ \Phi_\theta(\hat{\upsilon}_n)=\sigma_\theta(a)+o_n(1).
\end{equation}

Set $\tilde{w}_n:=\hat{\upsilon}_n-\hat{\upsilon}_\theta$. By \x{9-10} and Lemma 5.7 in \cite{CRSZ}, we know that
 \begin{equation}\label{9-11}
 \Phi_\theta(\hat{\upsilon}_n)= \Phi_\theta(\hat{\upsilon}_\theta)+ \Phi_\theta(\tilde{w}_n)+o_n(1),
 \end{equation}
 \begin{equation}\label{9-12}
\|\nabla\hat{\upsilon}_n\|^2_2=\|\nabla\hat{\upsilon}_\theta\|^2_2+\|\nabla \tilde{w}_n \|^2_2+o_n(1),
 \end{equation}
 and
\begin{equation}\label{9-13}
\|f(\tilde{w}_n)\|^2_2=\|f(\hat{\upsilon}_n)\|^2_2-\|f(\hat{\upsilon}_\theta)\|^2_2+o_n(1)=a-\|f(\hat{\upsilon}_\theta)\|^2_2+o_n(1).
 \end{equation}
Next, we claim that $\|f(\tilde{w}_n)\|^2_2\rightarrow0$ as $n\rightarrow\infty$. In fact, let $\bar{c}:=\|f(\hat{\upsilon}_\theta)\|^2_2\leq a$. If $\bar{c}=a$, then the claim holds. Suppose that $\bar{c}<a$. In view of \x{9-12}, \x{9-13} and \x{9-10}, for $n$ large enough, one has
\begin{equation}\label{9-14}
\beta_n:=\|f(\tilde{w}_n)\|^2_2\leq a,\ \  \ \|\nabla \tilde{w}_n \|^2_2\leq\|\nabla\hat{\upsilon}_n\|^2_2<t_0.
\end{equation}
By \x{9-14}, we have
\begin{equation}\label{9-15}
\tilde{w}_n\in \mathcal{S}_{\beta_n}\cap\Lambda_{t_0}, \ \ \  \Phi_\theta(\tilde{w}_n)\geq \sigma_\theta(\beta_n):=\inf_{v\in\mathcal{S}_{\beta_n}\cap\Lambda_{t_0}} \Phi_\theta(v).
\end{equation}
From \x{9-10}, \x{9-11} and \x{9-15}, we have
\begin{equation}\label{9-16}
\aligned
\sigma_\theta(a)+o_n(1)&= \Phi_\theta(\hat{\upsilon}_n)=\ \Phi_\theta(\hat{\upsilon}_\theta)+ \Phi_\theta(\tilde{w}_n)+o_n(1)\\
&\geq \Phi_\theta(\hat{\upsilon}_\theta)+\sigma_\theta(\beta_n)+o_n(1).
\endaligned
\end{equation}
By virtue of \x{9-16}, Lemma \ref{lemma4.3} and \x{9-13}, we have
\begin{equation}\label{9-17}
\sigma_\theta(a)\geq \Phi_\theta(\hat{\upsilon}_\theta)+\sigma_\theta(a-\bar{c}).
\end{equation}
Moreover, by \x{9-14} and the weak lower semi-continuity of norm, we conclude that $\hat{\upsilon}_\theta\in \mathcal{S}_{\bar{c}}\cap \overline{\Lambda_{t_0}}$. Thus $ \Phi_\theta(\hat{\upsilon}_\theta)\geq \sigma_\theta(\bar{c})$. If $ \Phi_\theta(\hat{\upsilon}_\theta)> \sigma_\theta(\bar{c})$, then it follows from \x{9-17} and Lemma \ref{lemma4.4} that
$
\sigma_\theta(a)>\sigma_\theta(\bar{c})+\sigma_\theta(a-\bar{c})\geq \sigma_\theta(a),
$
which is a contradiction. Hence $\sigma_\theta(\bar{c})$ is achieved at $\hat{\upsilon}_\theta$. By the  strict inequality in Lemma \ref{lemma4.4}, it follows from \x{9-17} that
$
\sigma_\theta(a)\geq \sigma_\theta(\bar{c})+\sigma_\theta(a-\bar{c})>\sigma_\theta(a).
$
This is a contraction. So $\|f(\hat{\upsilon}_\theta)\|^2_2=a$ and $\hat{\upsilon}_\theta\in \mathcal{S}_{a}\cap \overline{\Lambda_{t_0}}$. Thus the claim holds. From the claim, we infer that $\|f(\tilde{w}_n)\|^p_p\rightarrow0$ as $n\rightarrow\infty$, for all $2<p<2+\frac{4}{N}$. Since $\Phi_\theta(\hat{\upsilon}_\theta)\geq \sigma_\theta(a)$, it follows from \x{9-10} and \x{9-11} that
\begin{equation*}
\aligned
\sigma_\theta(a)+o_n(1)&=\Phi_\theta(\hat{\upsilon}_n)=\Phi_\theta(\hat{\upsilon}_\theta)+\Phi_\theta(\tilde{w}_n)+o_n(1)\\
&\geq  \sigma_\theta(a)+\frac{1}{2}\|\nabla \tilde{w}_n\|^2_2-\frac{1}{p}\|f(\tilde{w}_n)\|^p_p-\frac{1}{q}\|f(\tilde{w}_n)\|^{q}_{q}\\
&\geq \sigma_\theta(a)+\frac{1}{2}\|\nabla \tilde{w}_n\|^2_2+o_n(1),
\endaligned
\end{equation*}
which implies that $\|\nabla \tilde{w}_n\|^2_2\rightarrow0$ as $n\rightarrow\infty$, which, together with $\|f(\tilde{w}_n)\|^2_2\rightarrow0$ as $n\rightarrow\infty$, implies that
$
\int_{\R^N}(|\nabla \tilde{w}_n|^2+f^{2}(\tilde{w}_n))\text{d}x\rightarrow0\  \text{as $n\rightarrow\infty$}.
$
By Lemma \ref{lemmas2.4}, we have that $\tilde{w}_n\rightarrow0$ in $H^1(\R^N)$, that is, $\hat{\upsilon}_n\rightarrow\hat{\upsilon}_\theta$ in $H^1(\R^N)$. Hence
$
\|f(\hat{\upsilon}_\theta)\|^2_2=a,\  \|\nabla\hat{\upsilon}_\theta\|^2_2\leq t_0,\ \sigma_\theta(\bar{c})=\sigma_\theta(a)=\Phi_\theta(\hat{\upsilon}_\theta),
$
and  by the standard Schwartz rearrangement, it follows that $\hat{\upsilon}_\theta$ is radial. 
From Lemma \ref{lemma4.3}, we deduce that $\|\nabla\hat{\upsilon}_\theta\|^2_2<t_0$. By Corollary 2.4 in \cite{Chen3}, we infer that $\Phi_\theta|'_{\mathcal{S}_a}(\hat{\upsilon}_\theta)=0$.
\end{proof}

\par\noindent
{\bf Proof of Theorem \ref{theorem1.2}-$(i)$.}\,  By Lemmas \ref{lemma4.1}-\ref{lemma4.6}, we only need to choose $\theta=1$ and so \x{p} has a local radial minimizer $\hat{\upsilon}_1\in \mathcal{S}_a$ such that $\Phi(\hat{\upsilon}_1)=\sigma_1(a)$ and $\Phi|'_{\mathcal{S}_a}(\hat{\upsilon}_1)=0$.  Thus there exists $\lambda_*\in\R$ such that $\Phi'(\hat{\upsilon}_1)-\lambda_*  f(\hat{\upsilon}_1)f'(\hat{\upsilon}_1)=0$ in $(H^1(\R^N))^*$ and similar to the proof of Lemma \ref{lemma4.8}, one has $\lambda_* <0$.  $\hfill\Box$

\subsection{Mountain pass type solution}
In this subsection,  we study $\Phi(v)$ on radial space $H^1_r(\R^N)$.  From this and Palais' symmetric principle in \cite{MW}, the critical point in $H^1_r(\R^N)$ is also the critical point in $H^1(\R^N)$. To this end, let
$
\mathcal{S}^r_a=\mathcal{S}_a\cap H^1_r(\R^N).
$
Now,
we shall used Jeanjean's trick skills to show the existence of mountain pass type normalized solution for $2<p<2+\frac{4}{N}<4+\frac{4}{N}<q\leq2^*$, which is different from the local minimizer.
\begin{lemma}\label{lemma4.7}
Assume that  $2<p<2+\frac{4}{N}<4+\frac{4}{N}<q\leq2^*$ holds and $a\in(0,a^*_N)$. Then for any  $\theta\in[1/2,1]$, there exist  $v_1,v_2\in\mathcal{S}^r_a$   independent of $\theta$ such that $\|\nabla v_1\|^2_2<t_0$, $\|\nabla v_2\|^2_2>t_0$ and
\begin{equation*}
c_\theta(a):=\inf_{\gamma\in\Gamma}\max_{t\in[0,1]}\Phi_\theta(\gamma(t))>\max\left\{\Phi_\theta(v_1),\Phi_\theta(v_2)\right\},
\end{equation*}
where
\begin{equation*}
\Gamma=\left\{\gamma\in\mathcal{C}([0,1], \mathcal{S}^r_a)\big| \gamma(0)=v_1, \gamma(1)=v_2\right\}.
\end{equation*}
\end{lemma}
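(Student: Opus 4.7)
The strategy is to produce both $v_1$ and $v_2$ on a single one-parameter family emanating from a fixed radial function, so that the mountain pass inequality follows from the barrier at $\|\nabla v\|_2^2 = t_0$ already supplied by Lemma \ref{lemma4.2}.

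First I fix any $v \in \mathcal{S}_a^r$ and consider $v_s(x) := f^{-1}(s^{N/2} f(v(sx)))$, which keeps the mass $\|f(\cdot)\|_2^2 = a$ and preserves radial symmetry, so $v_s \in \mathcal{S}_a^r$ for every $s > 0$, and $s \mapsto v_s$ is continuous in $H^1(\R^N)$. From the identity
\begin{equation*}
\|\nabla v_s\|_2^2 = s^2 \int_{\R^N}\frac{1+2s^N f^2(v)}{1+2f^2(v)}|\nabla v|^2\,\text{d}x
\end{equation*}
and formula \x{e1}, I read off that $\|\nabla v_s\|_2^2 \to 0$ as $s \to 0^+$ and $\|\nabla v_s\|_2^2 \to +\infty$ as $s \to +\infty$, while the dominant term of $\Phi_{1/2}(v_s)$ is $-\frac{1}{2p}s^{N(p-2)/2}\|f(v)\|_p^p$ as $s\to 0^+$ (since $p < 2+\tfrac{4}{N}$ gives $N(p-2)/2 < 2$) and $-\frac{1}{2q}s^{N(q-2)/2}\|f(v)\|_q^q$ as $s\to+\infty$ (since $q > 4+\tfrac{4}{N}$ gives $N(q-2)/2 > N+2$, which includes the critical case $q=2^*$ in the admissible range $N=2,3$). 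Hence $\Phi_{1/2}(v_s) < 0$ for $s$ sufficiently small and sufficiently large.

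I then pick $s_0 > 0$ so small that $\|\nabla v_{s_0}\|_2^2 < t_0$ and $\Phi_{1/2}(v_{s_0}) < 0$, and $s_1 > 0$ so large that $\|\nabla v_{s_1}\|_2^2 > t_0$ and $\Phi_{1/2}(v_{s_1}) < 0$, and set $v_1 := v_{s_0}$, $v_2 := v_{s_1}$. These choices depend only on $v$, not on $\theta$. Since $\Phi_\theta = A - \theta B$ with $B \geq 0$ and $\theta \geq 1/2$, we have $\Phi_\theta(v_i) \leq \Phi_{1/2}(v_i) < 0$ for $i=1,2$ and all $\theta \in [1/2,1]$. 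The class $\Gamma$ is nonempty because $t \mapsto v_{(1-t)s_0 + t s_1}$ is a continuous curve in $\mathcal{S}_a^r$ joining $v_1$ to $v_2$.

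For an arbitrary $\gamma \in \Gamma$, continuity of $t \mapsto \|\nabla \gamma(t)\|_2^2$ and the intermediate value theorem give some $t^\ast \in (0,1)$ with $\|\nabla \gamma(t^\ast)\|_2^2 = t_0$, and Lemma \ref{lemma4.2} yields $\Phi_\theta(\gamma(t^\ast)) \geq t_0\, \mathcal{H}_a(t_0)$. I then note that for each fixed $t$ the map $a \mapsto \mathcal{H}_a(t)$ is strictly decreasing on $(0,+\infty)$ because both exponents of $a$ in \x{e3} are positive under our conditions on $p,q$; combining this with $\mathcal{H}_{a^\ast_N}(t_0) = \mathcal{H}_{a^\ast_N}(\bar t_{a^\ast_N}) = 0$ from Lemma \ref{lemma4.1}, I obtain $\mathcal{H}_a(t_0) > 0$ for every $a \in (0,a^\ast_N)$. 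Taking the maximum over $t$ and the infimum over $\gamma$ gives
\begin{equation*}
c_\theta(a) \geq t_0\, \mathcal{H}_a(t_0) > 0 > \max\{\Phi_\theta(v_1), \Phi_\theta(v_2)\}
\end{equation*}
uniformly in $\theta \in [1/2,1]$, which is exactly the claim. The only subtle point is guaranteeing the endpoints simultaneously satisfy the required inequality for all $\theta$ at once; this is automatic from the monotonicity $\Phi_\theta \leq \Phi_{1/2}$.
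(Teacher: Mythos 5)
Your proposal is correct, and it reaches the conclusion by a genuinely different construction of the endpoints than the paper. The paper takes $v_1:=\hat{\upsilon}_\theta$, the local minimizer produced by Lemma \ref{lemma4.6}, and $v_2:=\omega_{t^*}$ for a large fiber parameter $t^*$; you instead place \emph{both} endpoints on a single fiber curve $s\mapsto v_s=f^{-1}(s^{N/2}f(v(sx)))$ through an arbitrary fixed $v\in\mathcal{S}^r_a$, taking $s_0$ small (where $N(p-2)/2<2$ makes the $p$-term dominate and forces $\Phi_{1/2}(v_{s_0})<0$ with $\|\nabla v_{s_0}\|_2^2<t_0$) and $s_1$ large (where $N(q-2)/2>N+2$ dominates the gradient growth), and then pass from $\theta=1/2$ to all $\theta$ via $\Phi_\theta\leq\Phi_{1/2}$. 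This buys two things. First, your endpoints are transparently independent of $\theta$, exactly as the statement and Theorem \ref{theorem2.1} require, whereas the paper's choice $v_1=\hat{\upsilon}_\theta$ formally depends on $\theta$ and sits awkwardly with that requirement. Second, your proof is self-contained and elementary: it does not invoke the compactness machinery of Lemma \ref{lemma4.6} at all, only the fiber identity \x{e1}. The barrier part of your argument is the same as the paper's (intermediate value theorem at $\|\nabla\gamma(t^*)\|_2^2=t_0$), except that where the paper quotes Lemma \ref{lemma4.3}, you rederive positivity directly from Lemma \ref{lemma4.2} together with the strict monotonicity of $a\mapsto\mathcal{H}_a(t)$ in \x{e3} and $\mathcal{H}_{a^*_N}(\bar t_{a^*_N})=0$ from Lemma \ref{lemma4.1} — precisely the computation the paper itself carries out inside \x{e6} in Lemma \ref{lemma4.4}, so this is sound. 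Nothing downstream is lost: the proof of Theorem \ref{theorem1.2}-$(ii)$ only uses $c_\theta(a)>0>\sigma_1(a)$ and the bound $c_\theta(a)\leq c_{1/2}(a)$, both of which your endpoints deliver. The only point you assert without proof is the continuity of $s\mapsto v_s$ in $H^1(\R^N)$ (needed for the nonemptiness of $\Gamma$ and the intermediate value argument), but the paper relies on the same standard fact from the dual framework without comment, so you are at parity in rigor.
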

\begin{proof}
Let $v_1:=\hat{\upsilon}_\theta$, where $\hat{\upsilon}_\theta$ was obtained in Lemma \ref{lemma4.6}. By Lemma \ref{lemma4.6}, we deduce that $\|\nabla v_1\|^2_2<t_0$ and $\sigma_\theta(a)=\Phi_\theta(v_1)<0$.

Fixed $\omega\in \mathcal{S}^r_a$, defined by
$
\omega_t=:f^{-1}\left(t^{N/2}f(\omega(tx))\right),
$
then $\omega_t\in\mathcal{S}^r_a$. It follows that
\begin{equation*}
\aligned
\Phi_\theta(\omega_t)&=\frac{t^2}{2}\int_{\R^N}\frac{1+2t^Nf^2(\omega)}{1+2f^2(\omega)}|\nabla \omega|^2\text{d}x- \theta t^{-N}\int_{\R^N}\left[\frac{1}{p}\left|t^{N/2}f(\omega)\right|^{p}+\frac{1}{q}\left|t^{N/2}f(\omega)\right|^{q}\right]\text{d}x\\
&\leq\frac{t^2}{2}\int_{\R^N}\frac{1+2t^Nf^2(\omega)}{1+2f^2(\omega)}|\nabla \omega|^2\text{d}x-\frac{t^{\frac{N(p-2)}{2}}}{2p}\int_{\R^N} \left|f(\omega)\right|^{p}\text{d}x-\frac{t^{\frac{N(q-2)}{2}}}{2q}\int_{\R^N} \left|f(\omega)\right|^{q}\text{d}x\\
&\rightarrow-\infty, \ \ \text{as}\ \ t\rightarrow+\infty
\endaligned
\end{equation*}
and
\begin{equation*}
\|\nabla \omega_t\|^2_2=t^2\int_{\R^N}\frac{1+2t^N f^2(\omega)}{1+2f^2(\omega)}|\nabla \omega|^2\text{d}x\geq t^{2} \|\nabla \omega\|^2_2\rightarrow+\infty,\ \ \text{as}\ \ t\rightarrow+\infty.
\end{equation*}
Thus there exists $t^*>1$ sufficiently large such that $\Phi_\theta(\omega_{t^*})<\Phi_\theta(v_1)$ and $\|\nabla \omega_{t^*}\|^2_2>t_0$. Let $v_2=\omega_{t^*}$. Then $\|\nabla v_2\|^2_2>t_0$ and $\Phi_\theta(v_2)<\Phi_\theta(v_1)$ for any $\theta\in[1/2,1]$.
Let 
\begin{equation*}
\gamma_*(t) =f^{-1}\left(\frac{\sqrt{a}}{\|f\left((1-t)v_1 + tv_2\right)\|_2}[f\left((1-t)v_1 + tv_2\right)]\right), \quad t \in [0, 1].
\end{equation*}
Then $\gamma_*\in\Gamma$ and thus $\Gamma\neq\emptyset$.
Moreover, for any $\gamma\in \Gamma$, it follows from the definition of $\Gamma$ that $\|\nabla\gamma(0)\|^2_2=\|\nabla v_1\|^2_2<t_0$ and $\|\nabla \gamma(1)\|^2_2=\|\nabla v_2\|^2_2>t_0$. By the continuity of $\gamma$, we can get that there exists $t_*\in(0,1)$ such that
$\|\nabla \gamma(t_*)\|^2_2=t_0$. Thus for any $\gamma\in\Gamma$, one has
$$
\max\limits_{t\in[0,1]}\Phi_\theta(\gamma(t))\geq \Phi_\theta(\gamma(t_*))\geq \inf\limits_{v\in \partial(\mathcal{S}^r_a\cap\Lambda_{t_0})}\Phi_\theta(v),
$$
which, together with Lemma \ref{lemma4.3}, implies that
\begin{equation*}
c_\theta(a):=\inf_{\gamma\in\Gamma}\max_{t\in[0,1]}\Phi_\theta(\gamma(t))\geq \inf_{v\in \partial(\mathcal{S}^r_a\cap\Lambda_{t_0})}\Phi_\theta(v)>0>\sigma_\theta(a)=\inf_{v\in \mathcal{S}^r_a\cap\Lambda_{t_0}}\Phi_\theta(v),
\end{equation*}
for any $\theta\in[1/2,1]$. This completed the proof.
\end{proof}
\begin{remark}\label{remark4.2}
By Lemma \ref{lemma4.7}, we deduce that $\sigma_\theta(a)<0<c_\theta(a)\leq c_{1/2}(a)$ for any fixed $\theta\in[1/2,1]$ and $a\in(0,a^*_N)$.
\end{remark}

\begin{lemma}\label{lemma4.8}
Assume that  $2<p<2+\frac{4}{N}<4+\frac{4}{N}<q\leq2^*$ holds and  $a\in(0,a^*_N)$.  For almost every $\theta\in[1/2,1]$, there exists $v_\theta\in \mathcal{S}^r_a$, $\lambda_\theta<0$ such that $\Phi_\theta(v_\theta)=c_\theta(a)$ and $\Phi'_\theta(u_\theta)-\lambda_\theta  f(v_\theta)f'(v_\theta)=0$ in $(H^1(\R^N))^*$.
\end{lemma}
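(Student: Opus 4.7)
The plan is to apply Theorem \ref{theorem2.1} to the family $\Phi_\theta = A - \theta B$ on the Hilbert space $X=H^1_r(\R^N)$, where $A(v) = \tfrac{1}{2}\|\nabla v\|_2^2$ and $B(v) = \tfrac{1}{p}\|f(v)\|_p^p + \tfrac{1}{q}\|f(v)\|_q^q$. The hypotheses are readily verified: $B\geq 0$, $A$ is coercive on $H^1$, and Lemma \ref{lemma4.7} supplies $\theta$-independent mountain-pass endpoints $v_1, v_2 \in \mathcal{S}^r_a$ together with the strict inequality $c_\theta(a) > \max\{\Phi_\theta(v_1),\Phi_\theta(v_2)\}$. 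Consequently, for almost every $\theta\in[1/2,1]$, one obtains a sequence $\{v_n\}\subset\mathcal{S}^r_a$, bounded in $H^1_r$, satisfying $\Phi_\theta(v_n)\to c_\theta(a)$ and $\Phi_\theta|'_{\mathcal{S}^r_a}(v_n)\to 0$ in $(H^1_r)^*$.

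Next, the Lagrange multiplier rule yields scalars $\lambda_n\in\R$ such that $\Phi'_\theta(v_n) - \lambda_n f(v_n)f'(v_n)\to 0$ in $(H^1_r)^*$. Testing against the admissible $w_n := \tfrac{f(v_n)}{f'(v_n)} = f(v_n)\sqrt{1+2f^2(v_n)}\in H^1(\R^N)$, whose gradient equals $\tfrac{1+4f^2(v_n)}{1+2f^2(v_n)}\nabla v_n$, produces the key identity
\begin{equation*}
\int_{\R^N}\tfrac{1+4f^2(v_n)}{1+2f^2(v_n)}|\nabla v_n|^2\,\mathrm{d}x = \lambda_n a + \theta\|f(v_n)\|_p^p + \theta\|f(v_n)\|_q^q + o(1),
\end{equation*}
which bounds $\{\lambda_n\}$. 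After extracting a subsequence, $v_n\rightharpoonup v_\theta$ in $H^1_r$ and $\lambda_n\to\lambda_\theta$. The Strauss-type compact embedding $H^1_r\hookrightarrow L^s(\R^N)$ for $s\in(2,2^*)$, combined with the $1$-Lipschitz property of $f$ (Lemma \ref{lemma2.1}(2)), the pointwise bound $|f(t)|\leq 2^{1/4}|t|^{1/2}$ (Lemma \ref{lemma2.1}(8)), and interpolation against the $L^2$-bound $\|f(v_n)\|_2=\sqrt a$, yields $f(v_n)\to f(v_\theta)$ strongly in $L^s(\R^N)$ for every $s\in(2,22^*)$. Passing to the limit in the approximate Euler-Lagrange equation, $v_\theta$ weakly solves \x{subsection1} with multiplier $\lambda_\theta$, and Lemma \ref{lemmas2.1} yields $\mathcal{P}_\theta(v_\theta) = 0$.

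Once $v_\theta\not\equiv 0$ is established, eliminating $\|\nabla v_\theta\|_2^2$ between $\mathcal{P}_\theta(v_\theta)=0$ and the limit version of the displayed identity (with $a$ replaced by $\|f(v_\theta)\|_2^2$) gives, after simplification,
\begin{equation*}
\lambda_\theta\|f(v_\theta)\|_2^2 = \theta\|f(v_\theta)\|_p^p \cdot \tfrac{(N-2)p-2N}{2p} + \theta\|f(v_\theta)\|_q^q\cdot\tfrac{(N-2)q-2N}{2q} - \tfrac{N-2}{2}\int_{\R^N}\tfrac{2f^2(v_\theta)}{1+2f^2(v_\theta)}|\nabla v_\theta|^2\,\mathrm{d}x.
\end{equation*}
Since $2<p<2+\tfrac{4}{N}<2^*$ and $q\leq 2^*$, each coefficient on the right is nonpositive, with the $p$-coefficient strictly negative (for $N=2$ the $I_2$-term vanishes while both nonlinear coefficients stay strictly negative), so $\lambda_\theta<0$. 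Strong convergence $v_n\to v_\theta$ in $H^1(\R^N)$ then follows from a Brezis-Lieb splitting in the spirit of Lemma 5.7 of \cite{CRSZ} combined with testing $\Phi'_\theta(v_n)-\lambda_n f(v_n)f'(v_n)$ against $v_n - v_\theta$ and the coercivity provided by $\lambda_\theta<0$, giving $v_\theta\in\mathcal{S}^r_a$ and $\Phi_\theta(v_\theta)=c_\theta(a)$. The principal obstacle is precisely the nontriviality step, since radial compactness does not capture the $L^2$-mass; the plan is to leverage an asymptotic Pohozaev identity $\mathcal{P}_\theta(v_n)\to 0$, which for a.e.\ $\theta$ can be extracted from the scale-preserving path construction $v\mapsto f^{-1}(t^{N/2}f(v(tx)))$ underlying Theorem \ref{theorem2.1}. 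Under this identity, $v_\theta\equiv 0$ together with $\|f(v_n)\|_p^p, \|f(v_n)\|_q^q\to 0$ would force $\|\nabla v_n\|_2^2\to 0$, contradicting $\Phi_\theta(v_n)\to c_\theta(a)>0$.
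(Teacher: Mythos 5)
Up to your final step, your proposal is essentially the paper's own proof. You apply Theorem \ref{theorem2.1} on $H^1_r(\R^N)$ through Lemma \ref{lemma4.7}; you obtain the multipliers $\lambda_n$ from the Lagrange rule (the paper cites Lemma 3.2 of \cite{ZLW} and Berestycki--Lions \cite{BL}); you bound $\{\lambda_n\}$ by pairing the approximate equation with $f(v_n)/f'(v_n)$, and your weight $\frac{1+4f^2}{1+2f^2}$ is exactly the paper's $\int_{\R^N}|\nabla v_n|^2+\int_{\R^N}\frac{2f^2(v_n)}{1+2f^2(v_n)}|\nabla v_n|^2$; you deduce $\lambda_\theta<0$ by subtracting $\mathcal{P}_\theta(v_\theta)=0$ from the identity obtained by testing with $f(v_\theta)/f'(v_\theta)$, your coefficients $\frac{(N-2)s-2N}{2s}$ being the paper's $\frac{N(s-2)}{2s}-1$ for $s=p,q$ in \x{e14}; and you recover $v_n\to v_\theta$ in $H^1(\R^N)$ by testing $\Phi'_\theta-\lambda_\theta ff'$ against $v_n-v_\theta$ with the coercivity inequality \x{e18} (Lemma 3.11 of \cite{FS}), which indeed requires $\lambda_\theta<0$, together with radial compactness for \x{e19}--\x{e20}. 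All of this matches the paper step for step.

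The genuine gap is the step you yourself single out as the ``principal obstacle''. You are right that $v_\theta\neq0$ must be secured first: if $v_\theta=0$ the sign identity degenerates to $\lambda_\theta\cdot 0=0$, and without $\lambda_\theta<0$ the coercivity in \x{e18} is lost. (For what it is worth, the paper's own proof is silent on this point; its conclusion $\lambda_\theta<0$ tacitly presupposes $\|f(v_\theta)\|_2^2>0$, so you have spotted a real subtlety.) But your proposed remedy --- that $\mathcal{P}_\theta(v_n)\to0$ for a.e.\ $\theta$ ``can be extracted from the scale-preserving path construction underlying Theorem \ref{theorem2.1}'' --- is not available. Theorem \ref{theorem2.1} yields only conclusions (i)--(iii), and (iii) cannot be upgraded: by \x{e1}--\x{e2}, $\mathcal{P}_\theta(v_n)$ is the derivative of $t\mapsto\Phi_\theta\bigl((v_n)_t\bigr)$ at $t=1$, whose generating direction $\frac{N}{2}f(v_n)/f'(v_n)+x\cdot\nabla v_n$ involves $x\cdot\nabla v_n$, which does not belong to $H^1(\R^N)$ for a general $v_n\in H^1$; it is therefore not an admissible tangent vector, and the dual-norm smallness $\Phi_\theta|'_{\mathcal{S}^r_a}(v_n)\to0$ gives no control on $\mathcal{P}_\theta(v_n)$. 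Manufacturing Palais--Smale--Pohozaev sequences would require Jeanjean's auxiliary functional on $\R\times\mathcal{S}_a$, and the remark following Theorem \ref{theorem1.2} states that precisely this construction is what breaks down in the dual setting, since the kinetic term transforms inhomogeneously under $v\mapsto f^{-1}(t^{N/2}f(v(t\cdot)))$; avoiding it is the declared point of the paper. Note also that nothing you already have excludes $v_\theta=0$: in that case radial compactness gives $f(v_n)\to0$ in $L^p\cap L^q$, hence $\|\nabla v_n\|_2^2\to 2c_\theta(a)>0$, and the multiplier identity then forces $\lambda_\theta\geq 2c_\theta(a)/a>0$, which is internally consistent. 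So the nontriviality step needs a different argument from the one you sketch, and as written your proof is incomplete at exactly that point.
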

\begin{proof}
By Lemma \ref{lemma4.7} and Theorem \ref{theorem2.1}, for almost every $\theta\in[1/2,1]$, we can infer that there exists a bounded Palais-Smale sequence $\{v_n\}\subset \mathcal{S}^r_a$ such that
\begin{equation}\label{eee14}
\Phi_\theta(v_n)\rightarrow c_\theta(a)\ \ \text{and}\ \ \Phi_\theta|'_{\mathcal{S}^r_a}(v_n)\rightarrow0.
\end{equation}
By the boundedness of $\{v_n\}$,   there exists $v_\theta\in H^1_r(\R^N)$ such that
\begin{equation}\label{e10}
v_n\rightharpoonup v_\theta \ \ \text{in $H^1_r(\R^N)$}, \ \ v_n\rightarrow  v_\theta\ \text{in $L^s(\R^N)$}\ \ \text{for all $s\in(2,2^*)$},\ \ v_n\rightarrow v_\theta\ \ \text{a.e. on $\R^N$.}
\end{equation}
Moreover following Lemma 3.2 in \cite{ZLW} or Breestycki-Lions (see Lemma 3 in \cite{BL}), we also know that there exists $\lambda_n\in\R$ such that
\begin{equation}\label{eee15}
\Phi'_\theta(v_n)-\lambda_n f(v_n)f'(v_n)\rightarrow0\ \text{ in} \ (H^1(\R^N))^*.
\end{equation}
It follows from \x{eee14} that
 \begin{equation}\label{e12}
c_\theta(a)+o_n(1)=\frac{1}{2}\|\nabla v_n\|^2_2-\frac{\theta}{p}\int_{\R^N}|f(v_n)|^p\text{d}x-\frac{\theta}{q}\int_{\R^N}|f(v_n)|^q\text{d}x.
 \end{equation}
Since
\begin{equation*}
\aligned
\langle\Phi'_\theta(v_n),f'(v_n)/f(v_n)\rangle&=\int_{\R^N}|\nabla v_n|^2\text{d}x+\int_{\R^N}\frac{2f^2(v_n)}{1+2f^2(v_n)}|\nabla v_n|^2\text{d}x\\
&\quad\quad\quad\quad\quad\quad-\theta\int_{\R^N}|f(v_n)|^p\text{d}x-\theta\int_{\R^N}|f(v_n)|^q\text{d}x,
\endaligned
\end{equation*}
it follows from \x{e12} that $\{\langle\Phi'_\theta(v_n),f'(v_n)/f(v_n)\rangle\}$ is bounded, which together with \x{eee15}, imply that $\{\lambda_n\}$ is bounded in $\R$. Up to a subsequence, there exists $\lambda_\theta\in\R$ such that $\lambda_n\rightarrow \lambda_\theta$ as $n\rightarrow\infty$. By Lemma \ref{lemma2.3} and \x{e10}, we have that
$f(v_n)\rightarrow  f(v_\theta)$ in $L^s(\R^N)$ for all $s\in(2,2^*)$.

Next, we claim the following conclusions hold.

(i) $\Phi'_\theta(v_n)-\lambda_\theta  f(v_n)f'(v_n)\rightarrow0$ in $(H^1(\R^N))^*$;

(ii) $\Phi'_\theta(u_\theta)-\lambda_\theta  f(v_\theta)f'(v_\theta)=0$ in $(H^1(\R^N))^*$;

(iii)  $\lambda_\theta<0$;

(iv) $v_n\rightarrow v_\theta$ in $H^1(\R^N)$.

Now, we prove the claim. It is easy to prove that (i) and (ii) hold.
Next, we prove that $\lambda_\theta<0$. In fact, in (ii) testing with $\frac{f(v_\theta)}{f'(v_\theta)}$, we  get
\begin{equation}\label{e14}
\aligned
0=\langle\Phi'_\theta(v_\theta)-\lambda_\theta f(v_\theta)f'(v_\theta), f(v_\theta)/f'(v_\theta) \rangle&=\int_{\R^N}|\nabla v_\theta|^2\text{d}x+\int_{\R^N}\frac{2f^2(v_\theta)}{1+2f^2(v_\theta)}|\nabla v_\theta|^2\text{d}x\\
&\quad\quad-\lambda_\theta \int_{\R^N}|f(v_\theta)|^2\text{d}x-\theta \int_{\R^N}|f(v_\theta)|^p\text{d}x\\
&\quad\quad-\theta \int_{\R^N}|f(v_\theta)|^q\text{d}x=0.
\endaligned
\end{equation}
Moreover, since $\mathcal{P}_{\theta}(v_\theta)=0$, by \x{e14} and \x{e2}, we have that
\begin{equation*}
\aligned
0&=\langle\Phi'_\theta(v_\theta)-\lambda_\theta f(v_\theta)f'(v_\theta), f(v_\theta)/f'(v_\theta) \rangle-\mathcal{P}_{\theta}(v_\theta)\\
&=-\frac{N-2}{2}\int_{\R^N}\frac{2f^2(v_\theta)}{1+2f^2(v_\theta)}|\nabla v_\theta|^2\text{d}x-\lambda_\theta\int_{\R^N}|f(v_\theta)|^2\text{d}x\\
&\quad\quad+\theta\left[\frac{N(p-2)}{2p}-1\right]\int_{\R^N}|f(v_\theta)|^p\text{d}x+\theta\left[\frac{N(q-2)}{2q}-1\right]\int_{\R^N}|f(v_\theta)|^q\text{d}x,
\endaligned
\end{equation*}
which shows that
\begin{equation*}
\aligned
\lambda_\theta\int_{\R^N}|f(v_\theta)|^2\text{d}x
&=-\frac{N-2}{2}\int_{\R^N}\frac{2f^2(v_\theta)}{1+2f^2(v_\theta)}|\nabla v_\theta|^2\text{d}x\\
&\quad+\theta\left[\frac{N(p-2)}{2p}-1\right]\int_{\R^N}|f(v_\theta)|^p\text{d}x+\theta\left[\frac{N(q-2)}{2q}-1\right]\int_{\R^N}|f(v_\theta)|^q\text{d}x\\
&<0.
\endaligned
\end{equation*}
Hence $\lambda_\theta<0$ and so  (iii)  holds.

By (i), (ii) and the boundedness of $\{v_n\}$, we deduce that
\begin{equation}\label{e15}
\langle\Phi'_\theta(v_n)-\lambda_\theta  f(v_n)f'(v_n),v_n-v_\theta\rangle=o_n(1)
\end{equation}
 and
\begin{equation}\label{e16}
\langle\Phi'_\theta(v_\theta)-\lambda_\theta  f(v_\theta)f'(v_\theta),v_n-v_\theta\rangle=o_n(1).
\end{equation}
 From \x{e15} and \x{e16}, we have
\begin{equation}\label{e17}
\aligned
0&=\langle\Phi'_\theta(v_n)-\lambda_\theta  f(v_n)f'(v_n),v_n-v_\theta\rangle-\langle\Phi'_\theta(v_\theta)-\lambda_\theta  f(v_\theta)f'(v_\theta),v_n-v_\theta\rangle\\
&=\int_{\R^N}|\nabla v_n-\nabla v_\theta|^2\text{d}x-\lambda_\theta\int_{\R^N}\left( f(v_n)f'(v_n)- f(v_\theta)f'(v_\theta)\right)(v_n-v_\theta)\text{d}x\\
&\quad\quad-\int_{\R^N}\left[|f(v_n)|^{p-2}f(v_n)f'(v_n)- |f(v_\theta)|^{p-2}f(v_\theta)f'(v_\theta)\right](v_n-v_\theta)\text{d}x\\
&\quad\quad-\int_{\R^N}\left[|f(v_n)|^{q-2}f(v_n)f'(v_n)- |f(v_\theta)|^{q-2}f(v_\theta)f'(v_\theta)\right](v_n-v_\theta)\text{d}x.
\endaligned
\end{equation}
By a similar argument as Lemma 3.11 in \cite{FS}, we may prove that there is a constant
$C>0$ such that
 \begin{equation}\label{e18}
 \aligned
\int_{\R^N}|\nabla v_n-\nabla v_\theta|^2\text{d}x&-\lambda_\theta\int_{\R^N}\left(f(v_n)f'(v_n)- f(v_\theta)f'(v_\theta)\right)(v_n-v_\theta)\text{d}x\\
&\geq C \int_{\R^N}|\nabla v_n-\nabla v_\theta|^2\text{d}x-\lambda_\theta\int_{\R^N}|v_n-v_\theta|^2\text{d}x.
\endaligned
 \end{equation}
 By \x{e10} and H\"{o}lder inequality, one has
 \begin{equation}\label{e19}
\int_{\R^N}\left[|f(v_n)|^{p-2}f(v_n)f'(v_n)- |f(v_\theta)|^{p-2}f(v_\theta)f'(v_\theta)\right](v_n-v_\theta)\text{d}x=o_n(1)
 \end{equation}
 and
  \begin{equation}\label{e20}
\int_{\R^N}\left[|f(v_n)|^{q-2}f(v_n)f'(v_n)- |f(v_\theta)|^{q-2}f(v_\theta)f'(v_\theta)\right](v_n-v_\theta)\text{d}x=o_n(1).
 \end{equation}
 From \x{e17}-\x{e20}, we deduce that $v_n\rightarrow v_\theta$ in $H^1(\R^N)$, which shows that (iv) holds. Thus $\Phi_\theta(v_\theta)=c_\theta(a)$ and $\Phi'_\theta(u_\theta)-\lambda_\theta  f(v_\theta)f'(v_\theta)=0$ in $(H^1(\R^N))^*$.
 The proof is completed.
\end{proof}
\par\noindent
{\bf Proof of Theorem \ref{theorem1.2}-$(ii)$.}\,
By Lemma \ref{lemma4.8}, $v_\theta$ is solution of \x{subsection1} with $\lambda_\theta<0$ for almost every $\theta\in[1/2,1]$, namely, $\Phi_\theta(v_\theta)=c_\theta(a)$ and $\Phi'_\theta(u_\theta)-\lambda_\theta  f(v_\theta)f'(v_\theta)=0$ in $(H^1(\R^N))^*$. Since $\Phi(v)\leq\Phi_\theta(v)\leq\Phi_{1/2}(v)$ for all $v\in H^1_r(\R^N)$, it follows that
\begin{equation}\label{e21}
c_1(a)\leq \Phi_\theta(v_\theta)=c_\theta(a)\leq c_{1/2}(a).
\end{equation}

Now, we choose a  sequence $\theta_n\rightarrow 1^-$ such that $v_{\theta_n}\in \mathcal{S}^r_a$, $\lambda_{\theta_n}<0$,
\begin{equation}\label{e22}
\Phi_{\theta_n}(v_{\theta_n})=c_{\theta_n}(a)
\end{equation}
 and
\begin{equation}\label{e23}
\text{$\Phi'_{\theta_n}(u_{\theta_n})-\lambda_{\theta_n}  f(v_{\theta_n})f'(v_{\theta_n})=0$ in $(H^1(\R^N))^*$.}
\end{equation}
By Lemma \ref{lemmas2.1} and \x{e23}, we know that $\mathcal{P}_{\theta_n}(v_{\theta_n})=0$. By this, \x{e21}, \x{e22}  and Lemma \ref{lemma1.1}, we conclude that
\begin{equation*}
\aligned
c_{1/2}(a)&\geq c_{\theta_n}=\Phi_{\theta_n}(v_{\theta_n})-\frac{2}{N(q-2)}\mathcal{P}_{\theta_n}(v_{\theta_n)}\\
&=\frac{q-\left(4+\frac{4}{N}\right)}{2(q-2)}\int_{\R^N}|\nabla v_{\theta_n}|^2\text{d}x+\frac{1}{q-2}\int_{\R^N}|\nabla f(v_{\theta_n})|^2\text{d}x-\frac{(q-p)\theta_n}{p(q-2)} \|f(v_{\theta_n})\|^p_p\\
&\geq\frac{q-\left(4+\frac{4}{N}\right)}{2(q-2)}\int_{\R^N}|\nabla v_{\theta_n}|^2\text{d}x-\frac{q-p}{p(q-2)}C^p_{N,p}a^{\frac{2p-N(p-2)}{4}}\left(\int_{\R^N}|\nabla v_{\theta_n}|^2\text{d}x\right)^{\frac{N(p-2)}{4}},
\endaligned
\end{equation*}
which, together with  $2<p<2+\frac{4}{N}<4+\frac{4}{N}<q\leq2^*$, imply that $\{\|\nabla v_{\theta_n}\|^2_2\}$ is bounded. Combining this with $\|f(v_{\theta_n})\|^2_2=a$, it follows from Lemma \ref{lemmas2.4} that
 $\{v_{\theta_n}\}$ is bounded in $H^1_r(\R^N)$. Moreover, there exists $\lambda^*\leq0$ such that $\lambda_n\rightarrow \lambda^*$ as $n\rightarrow\infty$.
Furthermore, one has
\begin{equation}\label{e24}
\aligned
\lim_{n\rightarrow\infty}\Phi(v_{\theta_n})&=\lim_{n\rightarrow\infty}\left[\Phi_{\theta_n}(v_{\theta_n})+\frac{\theta_n-1}{p}\int_{\R^N}|f(v_{\theta_n})|^p\text{d}x+\frac{\theta_n-1}{q}\int_{\R^N}|f(v_{\theta_n})|^q\text{d}x\right]\\
&=\lim_{n\rightarrow\infty} \Phi_{\theta_n}(v_{\theta_n})=\lim_{n\rightarrow\infty}c_{\theta_n}\leq c_{1/2}(a)
\endaligned
\end{equation}
and
\begin{equation*}
\aligned
\lim_{n\rightarrow\infty}\|\Phi|'_{\mathcal{S}^r_a}(v_{\theta_n})\|_*
&=\lim_{n\rightarrow\infty}\sup_{\int_{\R^N}f(v_{\theta_n})f'(v_{\theta_n})\psi\text{d}x,\ \|v_{\theta_n}\|=1}\bigg[\langle\Phi'(v_{\theta_n}),\psi\rangle\\
&\quad\quad\quad+\frac{\theta_n-1}{p}\int_{\R^N}|f(v_{\theta_n})|^{p-2}f(v_{\theta_n})f'(v_{\theta_n})\psi\text{d}x \\
&\quad\quad\quad+\frac{\theta_n-1}{q}\int_{\R^N}|f(v_{\theta_n})|^{q-2}f(v_{\theta_n})f'(v_{\theta_n})\psi\text{d}x\bigg]=0.
\endaligned
\end{equation*}
Hence $\{v_{\theta_n}\}$ is a bounded Palasi-Smale sequence for $\Phi$ on $\mathcal{S}^r_a$. Next, similar to the argument as Lemma \ref{lemma4.8}, there exists $v_0\in H^1_r(\R^N)$ and $\lambda^*<0$ such that $v_{\theta_n}\rightarrow v_0$ in $H^1_r(\R^N)$ and $\Phi'(v_0)-\lambda^*f(v_0)f'(v_0)=0$.
Thus $v_0$ is a solution of \x{p} for all $a\in(0,a^*_N)$ and satisfies
\begin{equation*}
c_{1/2}(a)\geq\Phi(v_0)\geq c_1(a)>0>\sigma_1(a)=\inf_{v\in \mathcal{S}^r_a\cap\Lambda_{t_0}}\Phi(v)=\Phi(\hat{\upsilon}_1).
\end{equation*}
This shows that $v_0\neq \hat{\upsilon}_1$ and $\Phi(v_0)=\lim\limits_{n\rightarrow\infty}c_{\theta_n}$. Finally, by a similar argument as Lemma 4.4  in \cite{SZ}, we also conclude that  $\lim\limits_{n\rightarrow\infty}c_{\theta_n}=c_1(a)$.
 This completes the proof.
$\hfill\Box$

\vskip4mm
\par\noindent{\large\bf Acknowledgements}\\
The research of J. Chen  and J. Sun was supported by National Natural Science Foundation of China (Grant Nos. 12361024),
and partly by supported by Jiangxi Provincial Natural Science Foundation (Nos. 20252BAC250127, 20242BAB23002, 20242BAB25001  and 20232ACB211004).
The research of J. Zhang was supported by the National Natural Science Foundation of
China (12271152), the Key project of Scientific Research Project of Department of
Education of Hunan Province (24A0430).

\end{document}